\newtheorem{theorem}{Theorem}[section]
\newtheorem{lemma}{Lemma}[section]
\newcommand{\mtrx}[1]{\mathbf{#1}}	
\newcommand{\gmtrx}[1]{\bm{#1}}	
\newcommand{\me}{\mathrm{e}}
\newcommand{\mi}{\mathrm{i}}
\begin{document}
\begin{frontmatter}

\title{Explicit Third-Order Model Reduction Formulas \\  for General Nonlinear Mechanical Systems}

\author[mymainaddress]{Zsolt Veraszto}
\ead{zsoltv@student.ethz.ch}
\author[mymainaddress]{Sten Ponsioen\corref{mycorrespondingauthor}}
\ead{stenp@ethz.ch}
\author[mymainaddress]{George Haller}
\ead{georgehaller@ethz.ch}

\address[mymainaddress]{Institute for Mechanical Systems \\ ETH Z{\"u}rich, Leonhardstrasse 21, 8092 Z{\"u}rich, Switzerland}
\cortext[mycorrespondingauthor]{Corresponding author.}

\begin{abstract}
{\it
For general nonlinear mechanical systems, we derive closed-form, reduced-order models up to cubic order based on rigorous invariant manifold results. For conservative systems, the reduction is based on Lyapunov Subcenter Manifold (LSM) theory, whereas for damped-forced systems, we use  Spectral Submanifold (SSM) theory. To evaluate our explicit formulas for the reduced model, no coordinate changes are required beyond an initial linear one. The reduced-order models we derive are simple and depend only on physical and modal parameters, allowing us to extract fundamental characteristics, such as backbone curves and forced-response curves, of multi-degree-of-freedom mechanical systems. To numerically verify the accuracy of the reduced models, we test the reduction formulas on several mechanical systems, including a higher-dimensional nonlinear Timoshenko beam.
}
\end{abstract}

\begin{keyword}
spectral submanifolds\sep Lyapunov subcenter manifold\sep model-order reduction\sep nonlinear normal modes\sep structural dynamics\sep backbone curves\sep forced-response curves.
\end{keyword}

\end{frontmatter}
\section{Introduction}
The dimensional reduction of nonlinear systems in structural dynamics has two underlying drivers. One is the necessity to decrease computational time, as computing a reduced-order model can be orders of magnitudes faster than solving the full system. The second driver is model identification, which is easier to carry out when a reduced number of parameters needs to be fitted to experimental or simulation data.

The appealing properties of modal decomposition and reduction in linear systems motivate the exploration of a similar decomposition and reduction for nonlinear systems. The latter can be accomplished in a mathematically exact fashion if the spectral subspaces of the linearized system can be continued as invariant manifolds of equal dimension, tangent to those spectral subspaces at a fixed point. Moreover, if these manifolds are unique and analytic, they can be sought in the form of Taylor expansions near the fixed point. As a consequence, the dynamics on these manifolds will serve as a mathematically exact reduced-order model that can be constructed up to any required order of accuracy.

In this work, we provide ready-to-use explicit formulas for third-order reduced models of autonomous and non-autonomous mechanical systems with general second- and third-order polynomial nonlinearities, without transforming the second-order equations of motion to a first-order form. For conservative systems, this third-order model reduction is performed onto Lyapunov Subcenter Manifolds (LSMs) \cite{lsm}, whereas for damped-forced systems, we perform the reduction onto Spectral Submanifolds (SSMs) \cite{nnmssm}, which are mathematically rigorous versions of the nonlinear normal modes proposed first by Shaw and Pierre \cite{shaw1993normal}. To obtain higher-order LSM- and SSM-reduced models, one may use the automated numerical reduction procedure developed in \cite{Ponsioen2018}.

Alternative approaches to nonlinear model reduction in mechanical systems include the method of normal forms (\cite{touze2006nonlinear,neild2010applying,neild2015use} ) and the method of modal derivatives \cite{modalderivatives}. Normal forms involve a recursive simplification of the full system via a sequence of nonlinear coordinate changes near a fixed point. Truncated at any finite order, the normal form admits invariant subspaces to which one may reduce the dynamics. It is, however, generally unclear from this approach what will happen to these invariant subspaces under the addition of the remaining, unnormalized terms. Furthermore, the reduced model is available in coordinates that are nonlinear functions of the original coordinates and hence their physical meaning is not immediate. Finally, the procedure assumes the knowledge of a full linear modal decomposition at the start, which is not necessarily available for large systems.

The method of normal forms has nevertheless been applied to equations of motion in second-order form including periodic forcing (cf. Neild and Wagg \cite{neild2010applying} and Neild et al. \cite{neild2015use}). As an advantage, the method allows for the extraction of the steady-state response in addition to backbone curves in the unforced and undamped limit. As opposed to SSM theory, however, the method does not provide a mathematically exact reduced-order model and requires additional smallness assumptions on the nonlinear and damping terms.

In earlier work, Touz{\'e} and Amabili \cite{touze2006nonlinear} propose a normal form procedure for the construction of reduced-order models for geometrically nonlinear vibrations of thin structures. They introduce the external forcing directly into the normal form along non-physical, curvilinear coordinates. In this work, we prove that at leading-order, the SSM-reduced system of a periodically forced mechanical system can be seen as the autonomous SSM-reduced system with the modal-participation factor (see G{\'e}radin and Rixen \cite{Geradin2014}) of the first mode added to the reduced system, therefore justifying the Touz{\'e}--Amabili approach. Our proof utilizes recent work by Breunung and Haller \cite{thomas} on time-periodic SSMs in mechanical systems.

A more recent application of the method of normal forms for model-order reduction purposes can be found in the work of Denis et al. \cite{equivduffing}. They observe that, in the absence of internal resonances, a reduced-order model for each mode can be approximated by a Duffing oscillator. The method requires full knowledge of the linear modal decomposition and excludes resonances up to any order in order to make the unnormalized tail of the asymptotic expansion arbitrarily small. In this work, we confirm this idea mathematically using LSM theory and show that for a conservative system with symmetric cubic nonlinearities, a third-order reduced model can indeed be sought as a Duffing oscillator, without using a modal transformation.

The other alternative to nonlinear model reduction is the method of modal derivatives \cite{modalderivatives}, which assumes the existence of quadratic invariant manifolds in the configuration space (space of positions). While such a manifold can formally be sought, it generally does not actually exist in the configuration space but in the phase space. Accordingly, modal derivatives only give an accurate reduced-order model in slow-fast systems in which spectral submanifolds have a weak dependence on the velocities (see  \cite{slowfast}).

In our present development of explicit, cubic reduction formulas, we circumvent the issues with normal forms and modal derivatives and provide generally applicable, explicit formulas for nonlinear reduced-order models. We also illustrate the use of our reduction formulas on different mechanical systems varying from simple to complex.
%
%

\section{Conservative systems}

\subsection{Setup}

We consider $(n+1)$-degree-of-freedom, undamped nonlinear autonomous mechanical systems of the form
  \begin{align}
    \ddot x + \omega^2 x+P(x,\mtrx y)&=0,
    \label{systemx}\\
    \mtrx{M\ddot y + K y}+\mtrx{Q}(x,\mtrx{y})&=\mtrx{0},
    \label{systemy}
  \end{align}
where $x$ is a scalar, $\mtrx y$ is an $n$-vector; $P$ and $\mtrx{Q}$ are nonlinear, polynomial functions of $x$ and $\mtrx y$ up to order $2 \leq l\leq 3$; $\mtrx M$ and $\mtrx K$ are the mass and stiffness matrices corresponding to the $n$-degree-of-freedom generalized coordinate vector $\mtrx y$, while $m$ and $k$ are the modal mass and stiffness coefficients corresponding to $x$. We refer to the $x$ variable as the modeling variable and $\mtrx y$ as the vector of non-modeling variables. Obtaining the form (\ref{systemx})-(\ref{systemy}) for a mechanical system requires the use of a linear transformation that decouples the modeling variable from the rest. To this end, only the mode shape associated with the $x$-mode has to be explicitly known; the remaining $n$ columns of the matrix of the linear transformation can be chosen as an arbitrary basis from the $n$-dimensional plane orthogonal to the known mode shape in $\mathbb{R}^{n+1}$.

We will use the notation
\begin{equation}
\omega=\sqrt{\frac{k}{m}},\quad \gmtrx{\Omega}^2_\text{p} = {\mtrx{M}^{-1}\mtrx{K}},
\end{equation}
where $\omega$ is the natural frequency belonging to the modeling mode $x$ with the imaginary pair of eigenvalues $\pm\mi\omega$. The non-modeling modes belong to the purely imaginary eigenvalues $\pm\mi\omega_1,\ldots,\pm\mi\omega_n$, with $\omega_k$ denoting the $k^\text{th}$ natural frequency. Our notation for the coefficients of the $l^\text{th}$-order polynomials $P$ and $\mtrx{Q}$ is
\begin{gather}
P(x,\mtrx{y})=\sum_{j+|\mtrx{k}|=2}^{l}p_{j\mtrx{k}}x^j y_1^{k_1}\ldots y_n^{k_n},\quad \mtrx{Q}(x,\mtrx{y})=\sum_{j+|\mtrx{k}|=2}^{l}\mtrx{q}_{j\mtrx{k}}x^j y_1^{k_1}\ldots y_n^{k_n}, \label{notation_1}\\
p_{j\mtrx{k}}\in\mathbb{R},\quad \mtrx{q}_{j\mtrx{k}}\in\mathbb{R}^n,\quad \mtrx{k}\in\mathbb{N}^n. \nonumber
\end{gather}
For the integer vectors $\mtrx{k}\in\mathbb{N}^n$, specific values we will often need are
\begin{equation}
\mtrx{0}=(0,\ldots,0)\in\mathbb{N}^n,\quad \mtrx{e}_j=(0,\ldots,0,\underset{j^\text{th}}{1},0,\ldots,0)\in\mathbb{N}^n, \nonumber
\end{equation}
denoting the identically zero index vector and the $j^\text{th}$ unit vector, respectively. Additionally, we introduce the notation
\begin{equation}
\mtrx{p}_{j\mtrx{I}}=\left[p_{j\mtrx{e}_1},\ldots,p_{j\mtrx{e}_n}\right]^\top\in\mathbb{R}^{n},\quad
{\mtrx{Q}}_{j\mtrx{I}}=\left[\mtrx{q}_{j\mtrx{e}_1},\ldots,\mtrx{q}_{j\mtrx{e}_n}\right]\in\mathbb{R}^{n\times n}.
\end{equation}

\subsection{The LSM-reduced model in the general case \label{sec:lsm_general}}
As shown by Kelley \cite{lsm}, under the non-resonance conditions
\begin{equation}
  \frac{\omega_i}{\omega}\notin \mathbb{Z},\quad i=1,\dots,n,
  \label{lsmnonresonance}
\end{equation}
a unique, analytic, two-dimensional invariant manifold
\begin{equation}
  \mathcal{W}=\lbrace (x,\dot x,\mtrx y, \dot{\mtrx{y}})\in\mathbb{R}^{2n+2},\,\mtrx y=\mtrx w (x,\dot x), \dot{\mtrx{y}} = \dot{\mtrx{w}}(x,\dot x)\rbrace
\end{equation}
exists for system (\ref{systemx})-(\ref{systemy}), tangent to the two-dimensional subspace $\mtrx{y}=\dot{\mtrx{y}}=\mtrx{0}$ at the trivial fixed point $x=\dot{x}=0$, $\mtrx{y}=\dot{\mtrx{y}}=\mtrx{0}$. Often called the Lyapunov subcenter manifold (LSM), $\mathcal{W}$ is known to be filled with periodic orbits of (\ref{systemx})-(\ref{systemy}). In the nonlinear vibrations literature, the amplitude-frequency plot of these periodic orbits is called the (conservative) \emph{backbone curve} of system (\ref{systemx})-(\ref{systemy}) (see, e.g., Londono et al. \cite{londono2015identification}).

Reducing the full dynamics to the invariant manifold $\mathcal{W}$ gives an exact two-dimensional model for the nonlinear dynamics associated with the $x$-mode. Such a model has apparently not been computed for the general class of systems (\ref{systemx})-(\ref{systemy}) in the literature before. We will carry out this computation explicitly below.

As a first step, we derive a universally valid third-order approximation to $\mathcal{W}$ in the form
\begin{equation}
  \mtrx{y} = \mtrx{w}(x,\dot{x})=\sum_{i+j\in \{2,3\}} \mtrx{w}_{ij} x^i \dot x^j+\mathcal{O} (|(x,\dot x)|^4).
  \label{3rdapprox}
\end{equation}
We restrict (\ref{systemx})-(\ref{systemy}) to the LSM by enslaving the $\mtrx{y}$ variable to $(x,\dot{x})$ in equation (\ref{systemx}) as
\begin{equation}
    \ddot x + \omega^2 x+P\left(x,\mtrx w\left(x,\dot x\right)\right)+\mathcal{O} \left(|\left(x,\dot x\right)|^4\right)=0. \label{reducedmodeldefinition}
\end{equation}
More specifically, we obtain the following result.
\begin{theorem}\label{thrm:lsmform}
Under assumption (\ref{lsmnonresonance}), the exact, third-order reduced model for system (\ref{systemx})-(\ref{systemy}) on the LSM, $\mathcal{W}$, takes the form
  \begin{equation}
      \ddot x + \omega^2 x+p_{2\mtrx{0}}x^2+\left(p_{3\mtrx{0}}+ \left<\mtrx{p}_{1\mtrx{I}},\mtrx{w}_{20}\right>\right) x^3 +\left<\mtrx{p}_{1\mtrx{I}},\mtrx{w}_{02}\right> x \dot x^2+\mathcal{O} (|(x,\dot x)|^4)=0,
      \label{lsmreducednonmodal}
  \end{equation}
where
\begin{align}
\mtrx{w}_{20}&=-\left(\gmtrx{\Omega}_\text{p}^2-2\omega^2\mtrx{I}\right)\left(\gmtrx{\Omega}_\text{p}^2-4\omega^2\mtrx{I}\right)^{-1}\gmtrx{\Omega}_\text{p}^{-2}\mtrx{M}^{-1}\mtrx{q}_{2\mtrx{0}}, \nonumber \\
\mtrx{w}_{02}&=2\left(\gmtrx{\Omega}_\text{p}^2-4\omega^2\mtrx{I}\right)^{-1}\gmtrx{\Omega}_\text{p}^{-2}\mtrx{M}^{-1}\mtrx{q}_{2\mtrx{0}}. \nonumber
\end{align}
\end{theorem}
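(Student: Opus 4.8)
The strategy is a standard invariance-equation calculation: substitute the Taylor ansatz $\mtrx{y} = \mtrx{w}(x,\dot x)$ into the non-modeling equation (\ref{systemy}) and match coefficients order by order. First I would differentiate (\ref{3rdapprox}) twice in time, using $\ddot x = -\omega^2 x + \mathcal{O}(|(x,\dot x)|^2)$ from (\ref{systemx}) to eliminate $\ddot x$ wherever it appears, so that $\ddot{\mtrx{w}}$ is expressed purely in terms of $x$ and $\dot x$. Concretely, $\frac{d^2}{dt^2}\bigl(\mtrx{w}_{20}x^2 + \mtrx{w}_{11}x\dot x + \mtrx{w}_{02}\dot x^2\bigr)$ produces, at quadratic order, a combination of $x^2$, $x\dot x$, and $\dot x^2$ with coefficients that are linear in $\mtrx{w}_{20},\mtrx{w}_{11},\mtrx{w}_{02}$ and involve $\omega^2$. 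Plugging this, together with $\mtrx{M}\mtrx{K}$-terms $\mtrx{M}\mtrx{K}^{-1}$ — rather, $\mtrx{M}^{-1}\mtrx{K} = \gmtrx{\Omega}_\text{p}^2$ — and the quadratic part $\mtrx{q}_{2\mtrx{0}}x^2$ of $\mtrx{Q}$ into (\ref{systemy}), and collecting the coefficients of $x^2$, $x\dot x$, $\dot x^2$ separately, yields a linear system of three matrix equations for $\mtrx{w}_{20},\mtrx{w}_{11},\mtrx{w}_{02}$.

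Solving that $3\times 3$ block system is the computational heart of the argument. The $x\dot x$ equation will force $\mtrx{w}_{11}=\mtrx{0}$ (no $x\dot x$ source appears at quadratic order since $\mtrx{Q}$'s quadratic part in the relevant variables is just $\mtrx{q}_{2\mtrx{0}}x^2$ here, and the cross terms cancel), which decouples the $x^2$ and $\dot x^2$ equations into a $2\times 2$ system whose coefficient matrix, after factoring, involves $(\gmtrx{\Omega}_\text{p}^2 - 4\omega^2\mtrx{I})$; invertibility of this operator is guaranteed precisely by the non-resonance condition (\ref{lsmnonresonance}) with $\omega_i/\omega \neq 2$. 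Inverting gives the stated closed forms for $\mtrx{w}_{20}$ and $\mtrx{w}_{02}$. I should double-check the $\gmtrx{\Omega}_\text{p}^{-2}$ and $(\gmtrx{\Omega}_\text{p}^2-2\omega^2\mtrx{I})$ factors by carefully tracking which terms of $\ddot{\mtrx{w}}$ survive after the $\ddot x$ substitution — this bookkeeping is where sign and factor-of-two errors are most likely.

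Once $\mtrx{w}(x,\dot x)$ is known to quadratic order, the reduced equation (\ref{lsmreducednonmodal}) follows by substituting into (\ref{reducedmodeldefinition}): $P(x,\mtrx{w}(x,\dot x)) = p_{2\mtrx{0}}x^2 + p_{3\mtrx{0}}x^3 + \langle \mtrx{p}_{1\mtrx{I}}, \mtrx{w}(x,\dot x)\rangle x + \mathcal{O}(|(x,\dot x)|^4)$, where the linear-in-$\mtrx{y}$ part of $P$, namely $\langle \mtrx{p}_{1\mtrx{I}}, \mtrx{y}\rangle x$, combines with the quadratic $\mtrx{w}_{20}x^2 + \mtrx{w}_{02}\dot x^2$ to generate exactly the $\langle\mtrx{p}_{1\mtrx{I}},\mtrx{w}_{20}\rangle x^3$ and $\langle\mtrx{p}_{1\mtrx{I}},\mtrx{w}_{02}\rangle x\dot x^2$ corrections; all other products are fourth order or higher. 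To finish rigorously I would invoke Kelley's theorem (cited as \cite{lsm}) for the \emph{existence, uniqueness and analyticity} of $\mathcal{W}$ under (\ref{lsmnonresonance}), which legitimizes the Taylor ansatz and guarantees the $\mathcal{O}(|(x,\dot x)|^4)$ remainder is genuine; the formal coefficient-matching above then determines the unique such expansion. The main obstacle is purely the careful elimination of $\ddot x$ inside $\ddot{\mtrx{w}}$ and the attendant algebra — there is no conceptual difficulty beyond ensuring the non-resonance hypothesis is exactly what is needed to invert $(\gmtrx{\Omega}_\text{p}^2 - 4\omega^2\mtrx{I})$.
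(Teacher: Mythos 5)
Your proposal is correct and follows essentially the same route as the paper: the paper's proof (Lemma \ref{lsmcoeffequationslemma} in Appendix \ref{app:der_lsm}) is exactly the invariance computation you describe --- differentiate the ansatz twice, eliminate $\ddot x$ via the modeling equation, match monomial coefficients to get $\mtrx{w}_{11}=\mtrx{0}$ and a coupled system for $\mtrx{w}_{20},\mtrx{w}_{02}$ whose solution factors through $\left(\gmtrx{\Omega}_\text{p}^2-4\omega^2\mtrx{I}\right)^{-1}\gmtrx{\Omega}_\text{p}^{-2}$, then substitute into $P$. Your reduction of the $2\times2$ block system and the final substitution step reproduce the paper's formulas.
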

\begin{proof}
This result follows directly upon substitution of the coefficients derived in Lemma \ref{lsmcoeffequationslemma} of Appendix \ref{app:der_lsm}.
\end{proof}


When system (\ref{systemx})-(\ref{systemy}) is lower dimensional and full modal decomposition can be performed in reasonable time, one can also derive explicit formulas for the LSM coefficients, as we show below in Section \ref{modallsmreduction}. These formulas enable a direct comparison of our results with other methods, such as the method of normal forms \cite{touze}, that unavoidably require a full modal transformation of the system.

\subsection{Explicit form of the LSM-reduced model when the non-modeling modes are available}
\label{modallsmreduction}

After a modal transformation of the form $\mtrx{y}=\gmtrx{\Phi \eta}$, system (\ref{systemx})-(\ref{systemy}) can be written as
\begin{align}
  \ddot x+\omega^2 x+ R(x,\gmtrx\eta) &=0,
  \label{modalx}\\
  \gmtrx{\ddot\eta}+\gmtrx{\Omega}^2 \gmtrx{\eta}+\mtrx{S}(x,\gmtrx{\eta})&=\mtrx 0,
  \label{modaly}
\end{align}
where $\omega$ is the natural frequency of the modeling variable $x$, and the diagonal matrix $\gmtrx \Omega$ contains the natural frequencies of the non-modeling variables $\mtrx y$. The $l^\text{th}$-order polynomials $R$ and $\mtrx{S}$ are given by
\begin{gather}
R(x,\gmtrx{\eta})=\sum_{j+|\mtrx{k}|=2}^{l}r_{j\mtrx{k}}x^j \eta_1^{k_1}\ldots \eta_n^{k_n},\quad \mtrx{S}(x,\gmtrx{\eta})=\sum_{j+|\mtrx{k}|=2}^{l}\mtrx{s}_{j\mtrx{k}}x^j \eta_1^{k_1}\ldots \eta_n^{k_n},\label{notation_2}\\
r_{j\mtrx{k}}\in\mathbb{R},\quad \mtrx{s}_{j\mtrx{k}}\in\mathbb{R}^n,\quad \mtrx{k}\in\mathbb{N}^n. \nonumber
\end{gather}
Similarly to Section \ref{sec:lsm_general}, we introduce the notation
\begin{equation}
\mtrx{r}_{j\mtrx{I}}:=\left[r_{j\mtrx{e}_1},\ldots,r_{j\mtrx{e}_n}\right]^\top\in\mathbb{R}^{n},\quad
{\mtrx{S}}_{j\mtrx{I}}:=\left[\mtrx{s}_{j\mtrx{e}_1},\ldots,\mtrx{s}_{j\mtrx{e}_n}\right]\in\mathbb{R}^{n\times n}.
\end{equation}
We now derive the third-order approximation to $\mathcal{W}$ in modal coordinates in the form
\begin{equation}
 \gmtrx{\eta}=\tilde{\mtrx{w}}(x,\dot{x})=\sum_{i+j\in \{2,3\}} \tilde{\mtrx{w}}_{ij} x^i \dot x^j+\mathcal{O} (|(x,\dot x)|^4).\label{3rdapproxmodal}
\end{equation}
We restrict (\ref{modalx})-(\ref{modaly}) to the LSM by enslaving the $\gmtrx{\eta}$ variable to $(x,\dot{x})$ in eq. (\ref{modalx}), which gives
   \begin{equation}
       \ddot x + \omega^2 x+R(x,  \tilde{\mtrx{w}}(x,\dot x))+\mathcal{O} (|(x,\dot x)|^4)=0.
   \end{equation}
We then obtain the following result.
\begin{theorem} \label{thrmROM}
Under assumption (\ref{lsmnonresonance}), the exact, third-order reduced model of (\ref{modalx})-(\ref{modaly}) on the LSM takes the form
\begin{equation}
     \begin{split}
       &\ddot x + \omega^2 x+r_{2\mtrx{0}}x^2+\left(r_{3\mtrx{0}}-\left<\mtrx r_{1\mtrx{I}}, (\gmtrx{\Omega}^2\mtrx{D}_4)^{-1}\mtrx{D}_2 \mtrx s_{2\mtrx{0}}\right>\right) x^3
       +2 \left<\mtrx r_{1\mtrx{I}}, (\gmtrx{\Omega}^2\mtrx{D}_4)^{-1} \mtrx s_{2\mtrx{0}}\right> x \dot x^2+\mathcal{O} (|(x,\dot x)|^4)=0,
     \end{split} \label{lsmreducedimmediatelyuseable}
\end{equation}
where
\begin{equation}
 \mtrx{D}_2 = (\gmtrx \Omega ^2-2\omega^2 \mtrx I), \quad   \mtrx{D}_4= (\gmtrx \Omega ^2-4\omega^2 \mtrx I). \nonumber
\end{equation}

%
\end{theorem}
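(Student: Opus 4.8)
The plan is to reproduce, in modal coordinates, the computation behind Theorem~\ref{thrm:lsmform}: first determine the quadratic Taylor coefficients of the manifold map $\tilde{\mtrx{w}}$ in (\ref{3rdapproxmodal}), and then substitute $\gmtrx{\eta}=\tilde{\mtrx{w}}$ into the modeling equation (\ref{modalx}). A truncation remark keeps the bookkeeping short: only the quadratic part $\tilde{\mtrx{w}}^{(2)}(x,\dot x)=\tilde{\mtrx{w}}_{20}x^2+\tilde{\mtrx{w}}_{11}x\dot x+\tilde{\mtrx{w}}_{02}\dot x^2$ of $\tilde{\mtrx{w}}$ is needed. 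Indeed, substituting $\gmtrx{\eta}=\tilde{\mtrx{w}}$ into $R(x,\gmtrx{\eta})=\sum r_{j\mtrx{k}}x^j\eta_1^{k_1}\cdots\eta_n^{k_n}$ and truncating at cubic order in $(x,\dot x)$ leaves only $r_{2\mtrx{0}}x^2$, $r_{3\mtrx{0}}x^3$ and $\langle\mtrx{r}_{1\mtrx{I}},x\,\tilde{\mtrx{w}}^{(2)}(x,\dot x)\rangle$: a monomial $r_{j\mtrx{k}}x^j\eta_1^{k_1}\cdots\eta_n^{k_n}$ with $|\mtrx{k}|\ge 1$ has order at least $j+2|\mtrx{k}|$ after the substitution, so together with $j+|\mtrx{k}|\ge 2$ it survives the truncation only when $j=|\mtrx{k}|=1$. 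Hence the cubic coefficients of $\tilde{\mtrx{w}}$ never enter, and, by the same counting, only $\mtrx{s}_{2\mtrx{0}}x^2$ of $\mtrx{S}(x,\tilde{\mtrx{w}})$ contributes at quadratic order.

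To pin down $\tilde{\mtrx{w}}^{(2)}$, I would use that on $\mathcal{W}$ the reduced flow obeys $\ddot x=-\omega^2 x+\mathcal{O}(|(x,\dot x)|^2)$; differentiating the identity $\gmtrx{\eta}(t)=\tilde{\mtrx{w}}(x(t),\dot x(t))$ twice along this flow and using $\ddot{\gmtrx{\eta}}=-\gmtrx{\Omega}^2\gmtrx{\eta}-\mtrx{S}(x,\gmtrx{\eta})$ yields, to quadratic order,
\[
\left(\dot x\,\partial_x-\omega^2 x\,\partial_{\dot x}\right)^{2}\tilde{\mtrx{w}}^{(2)}+\gmtrx{\Omega}^{2}\tilde{\mtrx{w}}^{(2)}+\mtrx{s}_{2\mtrx{0}}\,x^{2}=\mtrx{0}.
\]
Matching the coefficients of $x^2$, $x\dot x$ and $\dot x^2$ gives the linear systems $\mtrx{D}_4\tilde{\mtrx{w}}_{11}=\mtrx{0}$, $2\tilde{\mtrx{w}}_{20}+\mtrx{D}_2\tilde{\mtrx{w}}_{02}=\mtrx{0}$ and $\mtrx{D}_2\tilde{\mtrx{w}}_{20}+2\omega^4\tilde{\mtrx{w}}_{02}=-\mtrx{s}_{2\mtrx{0}}$. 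The non-resonance condition (\ref{lsmnonresonance}) (applied with the integers $0$ and $2$) forces $\omega_i\neq 0$ and $\omega_i\neq 2\omega$, so $\mtrx{D}_4$ and $\gmtrx{\Omega}^2$ are invertible; thus $\tilde{\mtrx{w}}_{11}=\mtrx{0}$, and eliminating $\tilde{\mtrx{w}}_{20}=-\tfrac12\mtrx{D}_2\tilde{\mtrx{w}}_{02}$ from the last equation leaves $(\mtrx{D}_2^{2}-4\omega^4\mtrx{I})\tilde{\mtrx{w}}_{02}=2\mtrx{s}_{2\mtrx{0}}$. The one algebraic step worth spelling out is the factorization $\mtrx{D}_2^{2}-4\omega^4\mtrx{I}=(\gmtrx{\Omega}^2-4\omega^2\mtrx{I})\gmtrx{\Omega}^2=\gmtrx{\Omega}^2\mtrx{D}_4$, valid because $\gmtrx{\Omega}$ is diagonal and commutes with $\mtrx{D}_2$ and $\mtrx{D}_4$; it gives $\tilde{\mtrx{w}}_{02}=2(\gmtrx{\Omega}^2\mtrx{D}_4)^{-1}\mtrx{s}_{2\mtrx{0}}$ and $\tilde{\mtrx{w}}_{20}=-(\gmtrx{\Omega}^2\mtrx{D}_4)^{-1}\mtrx{D}_2\mtrx{s}_{2\mtrx{0}}$.

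Finally I would substitute $\gmtrx{\eta}=\tilde{\mtrx{w}}$ into the enslaved modeling equation obtained from (\ref{modalx}); by the truncation remark this leaves $r_{2\mtrx{0}}x^2+r_{3\mtrx{0}}x^3+\langle\mtrx{r}_{1\mtrx{I}},\tilde{\mtrx{w}}_{20}\rangle x^3+\langle\mtrx{r}_{1\mtrx{I}},\tilde{\mtrx{w}}_{02}\rangle x\dot x^2$ modulo $\mathcal{O}(|(x,\dot x)|^4)$ — the possible $x^2\dot x$ term dropping because $\tilde{\mtrx{w}}_{11}=\mtrx{0}$ — and inserting the closed forms for $\tilde{\mtrx{w}}_{20}$ and $\tilde{\mtrx{w}}_{02}$ reproduces exactly (\ref{lsmreducedimmediatelyuseable}). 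I do not anticipate a genuine obstacle here: once organized by monomial, the computation is routine, and the only points requiring care are checking that (\ref{lsmnonresonance}) supplies precisely the invertibility of $\mtrx{D}_4$ and $\gmtrx{\Omega}^2$ that the elimination uses, and spotting the factorization $\mtrx{D}_2^{2}-4\omega^4\mtrx{I}=\gmtrx{\Omega}^2\mtrx{D}_4$ that collapses the coupled $2\times 2$ block system for $(\tilde{\mtrx{w}}_{20},\tilde{\mtrx{w}}_{02})$ into the stated closed forms. Alternatively, the result could be deduced from Theorem~\ref{thrm:lsmform} by carrying the modal change of variables $\mtrx{y}=\gmtrx{\Phi}\gmtrx{\eta}$ through its formulas, but keeping track of how $\mtrx{p}_{1\mtrx{I}}$, $\mtrx{q}_{2\mtrx{0}}$ and $\mtrx{w}_{ij}$ transform under $\gmtrx{\Phi}$ makes that route more error-prone than the direct derivation above.
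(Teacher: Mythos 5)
Your proposal is correct and follows essentially the same route as the paper: the invariance computation you set up (differentiating $\gmtrx{\eta}=\tilde{\mtrx{w}}(x,\dot x)$ twice along the enslaved flow and matching the $x^2$, $x\dot x$, $\dot x^2$ coefficients) reproduces exactly the decoupled per-mode linear systems in the proof of Lemma \ref{lsmcoefflemma}, and your closed forms for $\tilde{\mtrx{w}}_{20}$, $\tilde{\mtrx{w}}_{11}$, $\tilde{\mtrx{w}}_{02}$ agree with that lemma. Your explicit order-counting argument showing that only the quadratic part of $\tilde{\mtrx{w}}$ and only $\mtrx{s}_{2\mtrx{0}}$ enter the cubic reduced model is left implicit in the paper but is a correct and welcome clarification.
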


\begin{proof}
This result is directly obtained upon substitution of the coefficients derived in Lemma \ref{lsmcoefflemma} of Appendix \ref{app:lsm_avai}.
\end{proof}

Although similar results can be obtained using normal form theory, see e.g. Denis et al.\ \cite{equivduffing}, a clear advantage of the LSM reduction method is that it requires no nonlinear change of coordinates. As a consequence, the reduced model (\ref{lsmreducedimmediatelyuseable}) keeps the original modeling coordinate $x$ as a physically meaningful modal coordinate. Additionally, the coefficients of the reduced model (\ref{lsmreducedimmediatelyuseable}) are simple to compute numerically.

Note that equation (\ref{lsmreducedimmediatelyuseable}) takes the form of a Duffing-oscillator if
  \begin{equation}
 r_{2\mtrx{0}}=0,\quad \left<\mtrx r_{1\mtrx{I}}, (\gmtrx{\Omega}^2\mtrx{D}_4)^{-1} \mtrx s_{2\mtrx{0}}\right>=0.
  \end{equation}
This condition is satisfied, for instance, for an even potential function that leads to vanishing  second-order nonlinearities. In that case, the reduced model, accurate up to $\mathcal{O} (x^3)$, is given by
    \begin{equation}
      \ddot x + \omega^2 x+r_{3\mtrx{0}} x^3+\mathcal{O} (x^4)=0.
    \end{equation}
Consequently,  for a conservative system with symmetric cubic nonlinearities, a third-order reduced model can immediately be sought as a Duffing-oscillator. For such systems, therefore, the numerical and experimental procedure of Olivier et al. \cite{equivduffing} is justified when they define an equivalent Duffing-oscillator for reduced models obtained from a normal form analysis.

  \subsection{Reduced and truncated dynamics on the LSM}
  The full reduced dynamics on the LSM, a symplectic invariant manifold of a canonical Hamiltonian system, is necessarily Hamiltonian. However, system (\ref{lsmreducednonmodal}) is a truncation of the full reduced dynamics. Therefore, it is a priori unclear if (\ref{lsmreducednonmodal}) is Hamiltonian or not.

  In the following, we show that the truncated reduced model on the LSM is a conservative nonlinear oscillator with closed orbits, and therefore accurately captures the long-term qualitative dynamics on the LSM. As a first step, we rewrite the third-order reduced model (\ref{lsmreducednonmodal}) as a two-dimensional first-order system
  \begin{equation}
  \begin{split}
  \dot{x}=&y, \\
  \dot{y}=&-\omega^2 x -\alpha x^2 -\beta x^3 - \gamma xy^2. \label{firstorder}
  \end{split}
  \end{equation}
Note that system (\ref{firstorder}) is invariant under the transformation $x\rightarrow x$, $y\rightarrow -y$, $t\rightarrow-t$, and therefore is reversible with respect to the line $y=0$ \cite{Stepanov1960}.
  \begin{theorem}\label{po_existence}
  Let (\ref{firstorder}) be nondegenerate, i.e., assume that the determinant of its Jacobian at $x=y=0$ is positive, and assume that (\ref{firstorder}) is reversible. Then, the $x=y=0$ equilibrium of (\ref{firstorder}) is a center, i.e., system (\ref{firstorder}) has only periodic orbits in a vicinity of the origin.
  \end{theorem}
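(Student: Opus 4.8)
The plan is to invoke the classical fact that a planar reversible vector field whose linearization at a fixed point lying on the symmetry line is a center actually has a nonlinear center there. The Jacobian of (\ref{firstorder}) at $x=y=0$ has zero trace and, by the nondegeneracy hypothesis, positive determinant equal to $\omega^2$, so its eigenvalues are $\pm\mi\omega$ and the origin is a linear center. The assumed reversibility means that the involution $R(x,y)=(x,-y)$, whose fixed-point set is the axis $\{y=0\}$, conjugates the flow $\phi_t$ of (\ref{firstorder}) to its time-reversal, i.e.\ $R\circ\phi_t=\phi_{-t}\circ R$. The idea is to combine the rotation that the elliptic linearization forces on nearby orbits with this symmetry: show that the forward orbit of a point on the axis returns to the axis, then reflect that arc by $R$ to conclude that the orbit closes up.

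I would carry this out in three steps. \emph{(1) Forced rotation.} In polar coordinates $(x,y)=(\rho\cos\theta,\rho\sin\theta)$ one gets $\dot\theta=-(\omega^2 x^2+y^2)/\rho^2+\mathcal{O}(\rho)$, so on a small enough ball $U$ around the origin $\dot\theta$ is strictly negative and bounded away from $0$; hence every orbit in $U$ winds monotonically about the origin, and the time needed to decrease $\theta$ by $\pi$ is bounded by a constant $T_\ast$ independent of the orbit. \emph{(2) First return to the axis.} Fix $p=(x_0,0)$ with $0<|x_0|$ small. A Gr\"onwall estimate $\frac{d}{dt}\rho^2\le C\rho^2$ valid on $U$, together with the uniform half-turn time $T_\ast$, shows that the forward orbit of $p$ stays inside $U$ until $\theta$ has decreased by $\pi$, so it meets $\{y=0\}$ again at some $q=(x_1,0)\in U$ at a time $T>0$; the crossing is transversal since there $\dot x=y=0$ and $\dot y=-\omega^2 x_1+\mathcal{O}(x_1^2)\ne 0$, and the only equilibrium in $U$ is the origin. \emph{(3) Reflection closes the orbit.} Since $Rp=p$ and $Rq=q$, the identity $\phi_{-t}=R\circ\phi_t\circ R$ with $t=T$ gives $\phi_{-T}(p)=R\,\phi_T(p)=Rq=q=\phi_T(p)$, hence $\phi_T(q)=\phi_T(\phi_{-T}(p))=p$ and therefore $\phi_{2T}(p)=p$: the orbit through $p$ is periodic. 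As $x_0$ was an arbitrary small nonzero number, every orbit in a neighborhood of the origin is periodic, i.e.\ the origin is a center.

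The only genuinely delicate point is step (2): one must ensure that the forward orbit actually returns to the section $\{y=0\}$ \emph{before} it can leave the ball on which the rotation estimate holds. This is precisely why the Gr\"onwall bound must be paired with the uniform half-turn time $T_\ast$, and why $U$ has to be shrunk accordingly; the rest is routine.

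In the spirit of the explicit formulas of this paper, the same conclusion follows even more directly by exhibiting a first integral. Along orbits the substitution $u=y^2$ turns $y\,\frac{dy}{dx}=-\omega^2 x-\alpha x^2-\beta x^3-\gamma x y^2$ into the \emph{linear} first-order equation $\frac{du}{dx}+2\gamma x\,u=-2\omega^2 x-2\alpha x^2-2\beta x^3$, whose integration with integrating factor $\me^{\gamma x^2}$ yields the conserved quantity
\begin{equation}
H(x,y)=\me^{\gamma x^2} y^2+\int_0^x \me^{\gamma s^2}\bigl(2\omega^2 s+2\alpha s^2+2\beta s^3\bigr)\,ds \nonumber
\end{equation}
of (\ref{firstorder}). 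Since $H(x,y)=y^2+\omega^2 x^2+\mathcal{O}(|(x,y)|^3)$ is positive definite near the origin, its small level sets are closed curves encircling the origin and free of equilibria, so each one is a single periodic orbit; this reproves the theorem and exhibits (\ref{firstorder}) as an integrable, hence genuinely conservative, nonlinear oscillator.
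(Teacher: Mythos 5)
Your argument is correct, and it does more than the paper, whose entire proof is a citation of Stepanov and Nemytskii for general reversible systems. Your first argument is a self-contained planar proof of exactly that cited fact: the zero-trace, positive-determinant Jacobian gives eigenvalues $\pm\mi\omega$ and hence a uniform rotation rate near the origin; the Gr\"onwall bound on $\rho^2$ paired with the uniform half-turn time guarantees a first return to the symmetry axis $\{y=0\}$ inside the ball where the rotation estimate holds (you are right that this is the one delicate point, and you handle it correctly); and the identity $\phi_{-T}=R\circ\phi_T\circ R$ applied at the two axis points closes the orbit. This buys the reader a proof rather than a reference, at the cost of some length. Your second argument is a genuinely different route: integrating $y\,dy/dx$ as a linear ODE in $u=y^2$ with integrating factor $\me^{\gamma x^2}$ produces a first integral that is (up to a factor of $2$ and an additive constant) precisely the Hamiltonian of Theorem \ref{thm_ham}, and since it has a nondegenerate minimum at the origin its small level sets are closed regular curves, i.e.\ periodic orbits. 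This second route is arguably preferable in the context of the paper: it makes Theorem \ref{po_existence} an immediate corollary of Theorem \ref{thm_ham} (plus the Morse lemma), it needs neither the reversibility hypothesis nor the external reference, and it is also valid when $\gamma=0$ after the harmless replacement of $(\omega^2/2\gamma)\me^{\gamma x^2}$ by its well-defined limit $\omega^2x^2/2$ plus a constant, a degenerate case the formula in (\ref{Hproof}) does not cover as written.
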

  \begin{proof}
  See Stepanov and Nemytskii \cite{Stepanov1960} who prove this result for general reversible systems.
  \end{proof}
 Theorem \ref{po_existence} guarantees that locally around the origin, the truncated system qualitatively captures the long-term dynamics on the LSM. For a complete global understanding of the phase space of system (\ref{firstorder}), we now prove that the third-order reduced model is a generalized Hamiltonian system.

  \begin{theorem}\label{thm_ham}
System (\ref{firstorder}) is Hamiltonian with the symplectic form $\omega$ and the Hamiltonian $H$ defined as
  \begin{equation}
 \omega=\me^{-\gamma x^2}dx\wedge dy,\quad H(x,y) = \frac{1}{2}e^{\gamma x^2}y^2+\frac{\omega^2}{2\gamma}e^{\gamma x^2}
  		+ \int e^{\gamma x^2}\left(\alpha x^2 + \beta x^3\right)dx.   \label{Hproof}
  \end{equation}
  \end{theorem}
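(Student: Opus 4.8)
The plan is to verify directly that the vector field $(\dot x,\dot y)=(y,\,-\omega^2 x-\alpha x^2-\beta x^3-\gamma x y^2)$ is the Hamiltonian vector field associated with the proposed symplectic form $\omega=\me^{-\gamma x^2}\,dx\wedge dy$ and the proposed Hamiltonian $H$. Concretely, for a symplectic form written as $\omega = g(x,y)\,dx\wedge dy$ on the plane, the Hamiltonian vector field $X_H$ defined by $\iota_{X_H}\omega = dH$ has components $\dot x = \frac{1}{g}\,\partial_y H$ and $\dot y = -\frac{1}{g}\,\partial_x H$. So the whole claim reduces to checking two scalar identities with $g(x,y)=\me^{-\gamma x^2}$.

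First I would compute $\partial_y H$. Only the term $\tfrac12 e^{\gamma x^2}y^2$ depends on $y$, so $\partial_y H = e^{\gamma x^2} y$, and hence $\frac{1}{g}\,\partial_y H = \me^{\gamma x^2} y \cdot \me^{-\gamma x^2} = y$, which matches $\dot x = y$. Second I would compute $\partial_x H$. Differentiating term by term: $\partial_x\big(\tfrac12 e^{\gamma x^2}y^2\big) = \gamma x\, e^{\gamma x^2} y^2$; $\partial_x\big(\tfrac{\omega^2}{2\gamma}e^{\gamma x^2}\big) = \omega^2 x\, e^{\gamma x^2}$; and by the fundamental theorem of calculus $\partial_x\!\int e^{\gamma x^2}(\alpha x^2+\beta x^3)\,dx = e^{\gamma x^2}(\alpha x^2+\beta x^3)$. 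Summing and factoring out $e^{\gamma x^2}$ gives $\partial_x H = e^{\gamma x^2}\big(\gamma x y^2 + \omega^2 x + \alpha x^2 + \beta x^3\big)$, so $-\frac{1}{g}\,\partial_x H = -\big(\gamma x y^2+\omega^2 x+\alpha x^2+\beta x^3\big)$, which is exactly $\dot y$ in \eqref{firstorder}. This establishes that \eqref{firstorder} is the Hamiltonian system $(\,\omega,H\,)$.

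There is essentially no hard analytic obstacle here — the computation is a short exercise in the chain rule and the definition of a Hamiltonian vector field in two dimensions. The one point worth stating carefully for rigor is that $\omega=\me^{-\gamma x^2}\,dx\wedge dy$ is genuinely a symplectic form: it is a closed $2$-form (every top-degree form on a $2$-manifold is closed) and it is nondegenerate since $\me^{-\gamma x^2}>0$ everywhere, so the correspondence $X_H\mapsto \iota_{X_H}\omega$ is a linear isomorphism onto $1$-forms and the defining equation $\iota_{X_H}\omega=dH$ has a unique solution. I would also remark that the antiderivative $\int e^{\gamma x^2}(\alpha x^2+\beta x^3)\,dx$ is smooth (the integrand is entire), so $H$ is a well-defined smooth function, and the ambiguous additive constant of integration is irrelevant since only $dH$ enters. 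Finally, it is natural to note the consistency with Theorem~\ref{po_existence}: $H$ is a smooth conserved quantity whose Hessian at the origin is positive definite (the quadratic part is $\tfrac12 y^2 + \tfrac{\omega^2}{2}x^2$ up to the constant), so its level sets near the origin are closed curves, recovering the center behavior.
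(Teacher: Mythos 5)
Your route is a direct verification (compute $X_H$ from the stated pair $(\omega,H)$ and match it to (\ref{firstorder})), whereas the paper proceeds constructively: it posits $\dot{\mtrx z}=a(x)J\,DH$, derives the ODEs $\partial_x a=-2\gamma x\,a$ and $\partial_x F=(\omega^2x+\alpha x^2+\beta x^3)/a$, and solves them to produce $a$ and $H$. Both are legitimate; yours is shorter because the answer is handed to you, while the paper's shows where the exponential factor comes from. Your term-by-term differentiation of $H$ is correct, as are the remarks on nondegeneracy, smoothness of the antiderivative, and the positive-definite quadratic part of $H$ at the origin.

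There is, however, one genuine inconsistency you should not paper over: having set $g=\me^{-\gamma x^2}$ from the stated symplectic form, you then write $\frac{1}{g}\,\partial_yH=\me^{\gamma x^2}y\cdot \me^{-\gamma x^2}=y$, i.e.\ you use $1/g=\me^{-\gamma x^2}$. In fact $1/g=\me^{+\gamma x^2}$, so with the form exactly as stated in (\ref{Hproof}) your recipe gives $\dot x=\me^{2\gamma x^2}y$ and $\dot y=-\me^{2\gamma x^2}(\omega^2x+\alpha x^2+\beta x^3+\gamma xy^2)$, which is \emph{not} system (\ref{firstorder}). The verification closes precisely when the symplectic density is $\me^{+\gamma x^2}$, i.e.\ $\omega=\me^{\gamma x^2}dx\wedge dy$; equivalently, $\me^{-\gamma x^2}$ is the conformal factor $a(x)$ multiplying $J\,DH$ (the Poisson density), which is exactly what the paper's own proof computes, and whose reciprocal is the symplectic density. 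So your slip in the $1/g$ step happens to cancel a reciprocal slip in the theorem's statement of $\omega$. Fix the bookkeeping, state explicitly that the correct symplectic form for the given $H$ is $\me^{\gamma x^2}dx\wedge dy$ (or, alternatively, keep $\me^{-\gamma x^2}$ but present it as the Poisson structure $\me^{-\gamma x^2}\partial_x\wedge\partial_y$), and the proof is complete.
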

  \begin{proof}
We derive this result in Appendix \ref{app:ham}.
  \end{proof}
The linearization of system (\ref{firstorder}) reveals that its fixed point, $(x,y)=(0,0)$, is a center in linear approximation. The Hessian of $H(x,y)$, evaluated at $(x,y)=(0,0)$, is
  \begin{equation}
  D^2H(0,0) = \omega^2 > 0,
  \end{equation}
thus we conclude that the truncated reduced-order model is Lyapunov stable. This proves that, beyond being a close pointwise approximation, the truncated model also accurately represents the long-term, qualitative dynamics on the LSM around the fixed point.

\subsection{Comparison with reduced models obtained from the method of modal derivatives}

Although the application of modal derivatives (MDs) is based on an idea similar to the LSM reduction, MDs lack the underlying invariant manifold theory that would ensure robustness or validity of the reduced-order model  \cite{modalderivatives}. The manifold proposed by MDs is sought in the quadratic form $\gmtrx \eta =\gmtrx\Theta x^2$, where $\gmtrx\Theta$ is expressed, according to \cite{slowfast}, as

\begin{equation}
\gmtrx \Theta = -\frac{1}{2}\partial_{\gmtrx \eta} \mtrx S(x,\gmtrx \eta)^{-1}\left. \partial^2_{xx} \mtrx S(x,\gmtrx \eta)\right|_{x=0, \gmtrx \eta=\mtrx 0}, \\
\end{equation}
where
\begin{equation}
\mtrx S(x,\gmtrx \eta) =  \gmtrx{\Omega}^2 \gmtrx{\eta}+\sum_{j+|\mtrx{k}|=2}^{3}\mtrx{s}_{j\mtrx{k}}x^j \eta_1^{k_1}\ldots \eta_n^{k_n}.
\end{equation}
This leads to the coefficient vector

    \begin{equation}
        \gmtrx \Theta=-\gmtrx \Omega^{-2} \mtrx s_{2\mtrx{0}},
    \end{equation}
and ultimately to the reduced model
   \begin{equation}
       \ddot x + \omega_0^2 x+p_{2\mtrx{0}}x^2+\left(p_{3\mtrx{0}}-\left<\mtrx p_{1\mtrx{I}}, \gmtrx \Phi \gmtrx \Omega^{-2} \mtrx s_{2\mtrx{0}}\right>\right) x^3=0.
   \end{equation}

\begin{theorem}\label{mdtheorem}
In the limit of $\Omega_i \rightarrow \infty$, i.e., for large modal gaps between the modeling and non-modeling frequencies, the MD-reduced model approaches the LSM-reduced model. The two methods, however, do not agree in any other nontrivial case. Near resonances among the natural frequencies, the error of the MD-based reduction becomes unbounded.
\end{theorem}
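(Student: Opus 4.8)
The plan is to compare the two reduced models coefficient by coefficient in the modal coordinates of Theorem~\ref{thrmROM}, treating every coefficient as a function of the non-modeling frequencies $\Omega_1,\dots,\Omega_n$ with the modeling frequency $\omega$ held fixed. Rewriting the MD-reduced model in these coordinates via $R(x,\gmtrx{\eta})=P(x,\gmtrx{\Phi}\gmtrx{\eta})$ (so that $r_{j\mtrx{0}}=p_{j\mtrx{0}}$ and $\mtrx{r}_{1\mtrx{I}}=\gmtrx{\Phi}^\top\mtrx{p}_{1\mtrx{I}}$), it takes the form $\ddot x+\omega^2 x+r_{2\mtrx{0}}x^2+\left(r_{3\mtrx{0}}-\left<\mtrx{r}_{1\mtrx{I}},\gmtrx{\Omega}^{-2}\mtrx{s}_{2\mtrx{0}}\right>\right)x^3=0$; in particular it has the same $x^2$ coefficient as (\ref{lsmreducedimmediatelyuseable}) and no $x\dot x^2$ term. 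First I would subtract the two cubic coefficients: using $\gmtrx{\Omega}^{-2}=(\gmtrx{\Omega}^2\mtrx{D}_4)^{-1}\mtrx{D}_4$ together with $\mtrx{D}_4-\mtrx{D}_2=-2\omega^2\mtrx{I}$, one gets $\gmtrx{\Omega}^{-2}-(\gmtrx{\Omega}^2\mtrx{D}_4)^{-1}\mtrx{D}_2=-2\omega^2(\gmtrx{\Omega}^2\mtrx{D}_4)^{-1}$, so that the discrepancy in the cubic coefficients equals $-2\omega^2 c$ while the $x\dot x^2$ coefficient of (\ref{lsmreducedimmediatelyuseable}), absent from the MD model, equals $2c$, where
\[
c:=\left<\mtrx{r}_{1\mtrx{I}},(\gmtrx{\Omega}^2\mtrx{D}_4)^{-1}\mtrx{s}_{2\mtrx{0}}\right>=\sum_{i=1}^{n}\frac{(\mtrx{r}_{1\mtrx{I}})_i\,(\mtrx{s}_{2\mtrx{0}})_i}{\Omega_i^2\,(\Omega_i^2-4\omega^2)}.
\]
Thus the whole difference between the two reduced vector fields is governed by the single scalar $c$.

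Second, I would treat the large modal-gap limit. If $\Omega_i\to\infty$ for every $i$ while $\omega$ stays bounded, each summand of $c$ is $O(\Omega_i^{-4})$, so $c\to0$; hence both the cubic-coefficient discrepancy and the $x\dot x^2$ coefficient vanish in the limit, and the two reduced models converge — coefficient-wise, equivalently uniformly on compact subsets of the $(x,\dot x)$-plane — to the same limiting oscillator $\ddot x+\omega^2 x+r_{2\mtrx{0}}x^2+r_{3\mtrx{0}}x^3=0$. (Expanding $(\gmtrx{\Omega}^2\mtrx{D}_4)^{-1}\mtrx{D}_2=\gmtrx{\Omega}^{-2}+2\omega^2\gmtrx{\Omega}^{-4}+O(\|\gmtrx{\Omega}^{-6}\|)$ exhibits the rate of convergence.) This establishes the first claim.

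Third, for the remaining claims: since $\omega\neq0$, the two models agree for all $(x,\dot x)$ exactly when $c=0$. This is precisely the degenerate (``trivial'') situation — e.g.\ when the non-modeling positions do not enter the $x$-equation linearly ($\mtrx{r}_{1\mtrx{I}}=\mtrx{0}$), when the modeling coordinate does not excite the non-modeling modes at quadratic order ($\mtrx{s}_{2\mtrx{0}}=\mtrx{0}$), or through an accidental cancellation in the weighted inner product; away from it ($c\neq0$) the MD model both omits the $x\dot x^2$ term and carries an incorrect cubic coefficient, so the two methods disagree. Finally, let $\Omega_j\to2\omega$ for some $j$ with $(\mtrx{r}_{1\mtrx{I}})_j(\mtrx{s}_{2\mtrx{0}})_j\neq0$, i.e.\ a $1{:}2$ internal resonance between the modeling mode and the $j$-th non-modeling mode. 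The $j$-th summand of $c$ behaves like $(\mtrx{r}_{1\mtrx{I}})_j(\mtrx{s}_{2\mtrx{0}})_j/\bigl(4\omega^2(\Omega_j^2-4\omega^2)\bigr)\to\pm\infty$, so $c$ — and with it both coefficients $-2\omega^2 c$ and $2c$ of the exact LSM-reduced model (\ref{lsmreducedimmediatelyuseable}) — blows up, whereas the MD coefficients contain only $\gmtrx{\Omega}^{-2}$ and remain bounded as $\Omega_j\to2\omega$. Hence the modeling error of the MD reduction, measured against the exact LSM-reduced model, is unbounded near such a resonance. (At $\Omega_j=2\omega$ itself the LSM no longer exists, the non-resonance condition (\ref{lsmnonresonance}) failing since $2\in\mathbb{Z}$; at third order this is the only resonance that appears in the reduction formulas, higher-order resonances surfacing only in higher-order reductions.)

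I do not expect a genuine obstacle here: the argument is an algebraic identity for $c$ combined with an elementary asymptotic estimate. The two points that need care are (i) formulating the convergence in the right topology — coefficient-wise, hence uniform on compact sets of the $(x,\dot x)$-plane, rather than globally — and (ii) making the trivial/nontrivial dichotomy precise as $c=0$ versus $c\neq0$ and observing that the resonant value $\Omega_j=2\omega$ lies on the boundary of the domain on which the LSM, and therefore the LSM-reduced model, is defined.
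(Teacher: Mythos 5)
Your proposal is correct, and the underlying computation is the same rational function of $\Omega_i$ that the paper uses; the difference is where you compare the two methods. The paper's proof works at the level of the \emph{manifold} coefficients: it compares the LSM coefficient $\alpha_i=\tilde w_{20,i}$ with the MD coefficient $\Theta_i$ via the ratio $(\Omega_i^2-2\omega^2)/(\Omega_i^2-4\omega^2)\to 1$, shows $\gamma_i/\alpha_i\to 0$ for the $\dot x^2$ direction, and locates the trivial case as $\mtrx s_{2\mtrx 0}=\mtrx 0$ (both manifolds flat), with the blow-up read off from $\lim_{\Omega_i\to2\omega}\alpha_i/\Theta_i=\infty$. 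You instead compare the \emph{reduced ODEs} directly and package the entire discrepancy into the single scalar $c=\left<\mtrx r_{1\mtrx I},(\gmtrx\Omega^2\mtrx D_4)^{-1}\mtrx s_{2\mtrx 0}\right>$, so that the cubic-coefficient mismatch is $-2\omega^2c$ and the missing $x\dot x^2$ coefficient is $2c$; your identity $\gmtrx\Omega^{-2}-(\gmtrx\Omega^2\mtrx D_4)^{-1}\mtrx D_2=-2\omega^2(\gmtrx\Omega^2\mtrx D_4)^{-1}$ is verified and the three limits then follow exactly as in the paper. Your route is slightly more faithful to the theorem's wording (it is a statement about the reduced models) and makes the convergence rate and the topology of convergence explicit; the price is that your ``nontrivial case'' dichotomy becomes $c\neq 0$ rather than $\mtrx s_{2\mtrx 0}\neq\mtrx 0$, so you correctly must admit accidental cancellations in the inner product under which the two reduced ODEs coincide even though the two quadratic manifolds differ --- a case the paper's manifold-level argument excludes by construction. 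Both arguments are complete; neither has a gap.
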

\begin{proof}
We derive this result in Appendix \ref{app:md}.
\end{proof}
By Theorem \ref{mdtheorem}, the error of a model reduction using modal derivatives, instead of the LSM-reduced model, will be small for extremely large spectral gaps. Generally, however, the error introduced by the formal MD approach is substantial and grows unbounded near external resonances.

\subsection{Comparison with reduced models obtained from the method of normal forms }

The method of normal forms, described in detail in \cite{touze}, offers an alternative way to reduce a nonlinear mechanical system to a single mode and infer its hardening or softening behavior. The decoupling of the modeling mode from the rest is achieved through a nonlinear, near-identity change of variables. In that setting, the system is assumed to be in modal coordinates to begin with, therefore throughout this section, we also assume the input system for reduction in the form of equations (\ref{modalx})-(\ref{modaly}).

The key step in the derivation of a normal-form-based reduced model is to introduce a near-identity transformation of the modal coordinate system $\mtrx q=[x;\gmtrx \eta]$ to a curvilinear coordinate system $\mtrx s= [u; \mtrx v]$ of the form

\begin{align}
q_i&=s_i+\sum_{j=1}^{n+1} \sum_{k \geq j}^{n+1}\left(a_{ijk} s_j s_k+b_{ijk} \dot s_j \dot s_k\right)+\sum_{j=1}^{n+1} \sum_{k \geq j}^{n+1} \sum_{l \geq k}^{n+1}  r_{ijkl} s_j s_k s_l +\sum_{j=1}^{n+1} \sum_{k =1}^{n+1} \sum_{l \geq k}^{n+1} u_{ijkl} s_j \dot s_k \dot s_l,
\label{nearidentity1}\\
\dot q_i&=\dot s_i+\sum_{j=1}^{n+1} \sum_{k =1}^{n+1}\gamma_{ijk} s_j \dot s_k+\sum_{j=1}^{n+1} \sum_{k \geq j}^{n+1} \sum_{l \geq k}^{n+1} \mu_{ijkl} s_j s_k s_l +\sum_{j=1}^{n+1} \sum_{k =1}^{n+1} \sum_{l \geq k}^{n+1} \nu_{ijkl} \dot s_j s_k s_l,
\label{nearidentity2}
\end{align}
which constitutes a decoupling up to a given order between the modeling and non-modeling modes. The reduced model obtained by this approach, with the same notation as in (\ref{notation_1}) and (\ref{notation_2}), reads as

\begin{equation}
  \ddot u + \omega^2 u +\left(r_{3\mtrx{0}}-\langle\mtrx r_{1\mtrx{I}}, \left(\gmtrx\Omega^2 \mtrx D_2\right)^{-1}\mtrx s_{2\mtrx{0}} \rangle-\frac{2 r_{2\mtrx{0}}^2}{3 \omega^2} \right) u^3 +2 \left(\langle \mtrx r_{1\mtrx{I}},\gmtrx\Omega^2 \mtrx D_2^{-1}\mtrx s_{20} \rangle-\frac{2 r_{2\mtrx{0}}^2}{3 \omega^4}\right) u \dot u^2=0.
  \label{normalformreducedmodel}
\end{equation}

Although (\ref{normalformreducedmodel}) is similar to the LSM reduced model, it can only be computed if the system is available in modal coordinates. In that case, one has to compute only a small subset of the coefficients in the near-identity transformation (\ref{nearidentity1})-(\ref{nearidentity2}) to obtain (\ref{normalformreducedmodel}). The reduced model correctly identifies hardening or softening behavior up to second order (see Section \ref{backbonesection}).

The method of normal forms, however, does not guarantee that an actual LSM exists in the phase space under addition of higher-order, unnormalized terms in system (\ref{modalx})-(\ref{modaly}). In that sense, a truncated normal-form based reduced model is not justified at the same level of mathematical rigor as the LSM-based reduced model.

\section{Dissipative systems}
\subsection{Setup}
We now consider $(n+1)$-degree-of-freedom, periodically forced, damped nonlinear mechanical systems of the form
    \begin{gather}
        m {\ddot x} + c {\dot x}+ k x + P( x,\dot x, \mtrx y, \mtrx{\dot y})= \varepsilon F_1 \sin(\phi), \label{xeq}\\
        \mtrx M \mtrx{\ddot y} + \mtrx C \mtrx{\dot y}+ \mtrx K \mtrx y +\mtrx Q(x,\dot x, \mtrx y, \mtrx{\dot y})=\varepsilon \mtrx{F}_2 \sin(\phi), \label{yeq}\\
        0<\varepsilon\ll 1,\quad \phi=\Omega t, \nonumber
    \end{gather}
where $x$ is a scalar, $\mtrx y$ is an $n$-vector; $P$ and $\mtrx{Q}$ are now nonlinear, polynomial functions of $x$, $\dot{x}$, $\mtrx y$ and $\dot{\mtrx{y}}$ up to order $2 \leq l\leq 3$; $\mtrx M$, $\mtrx C$ and $\mtrx K$ are the mass, damping and stiffness matrices corresponding to the $n$-degree-of-freedom generalized coordinate vector $\mtrx y$, while $m$, $c$ and $k$ are the modal mass, damping and stiffness coefficients, corresponding to $x$. We assume Rayleigh-damping, which implies that the modal matrices simultaneously diagonalize the mass, the stiffness and the damping matrices. The mono-harmonic forcing vectors $F_1 \sin(\phi)$ and $\mtrx{F}_2 \sin(\phi)$, do not depend on the positions and velocities. In this setting, our notation of the coefficients of the $l^\text{th}$-order polynomials $P$ and $\mtrx{Q}$ is
\begin{gather}
P(x,\dot{x},\mtrx{y},\dot{\mtrx{y}})=\sum_{j+k+|\mtrx{u}|+|\mtrx{v}|=2}^{l}p_{jk\mtrx{u}\mtrx{v}}x^j \dot{x}^k y_1^{u_1}\ldots y_n^{u_n}\dot{y}_1^{v_1}\ldots\dot{y}_n^{v_n}, \\
\mtrx{Q}(x,\dot{x},\mtrx{y},\dot{\mtrx{y}})=\sum_{j+k+|\mtrx{u}|+|\mtrx{v}|=2}^{l}\mtrx{q}_{jk\mtrx{u}\mtrx{v}}x^j \dot{x}^k y_1^{u_1}\ldots y_n^{u_n}\dot{y}_1^{v_1}\ldots\dot{y}_n^{v_n}, \label{notation_ssm}\\
p_{jk\mtrx{u}\mtrx{v}}\in\mathbb{R},\quad \mtrx{q}_{jk\mtrx{u}\mtrx{v}}\in\mathbb{R}^n,\quad \mtrx{u}\in\mathbb{N}^n,\quad \mtrx{v}\in\mathbb{N}^n. \nonumber
\end{gather}
Additionally, we introduce the notation
\begin{equation}
\mtrx{p}_{jk\mtrx{I}\mtrx{0}}:=\left[p_{jk\mtrx{e}_1\mtrx{0}},\ldots,p_{jk\mtrx{e}_n\mtrx{0}}\right]^\top\in\mathbb{R}^{n},\quad
\mtrx{p}_{jk\mtrx{0}\mtrx{I}}:=\left[p_{jk\mtrx{0}\mtrx{e}_1},\ldots,p_{jk\mtrx{0}\mtrx{e}_n}\right]^\top\in\mathbb{R}^{n}.
\end{equation}
We denote the solutions of the eigenproblem $
\det(m\lambda^2  + c\lambda  + k)=0$ by $\lambda_{1}^x=\bar{\lambda}_{2}^x$, corresponding to the underdamped $x$-mode. We denote the solutions of the eigenproblem $\det(\lambda^2 \mtrx M + \lambda \mtrx C + \mtrx K)=0$ by $\lambda_{i}^y$, $i\in 1,\dots,2n$. We order the eigenvalues by decreasing real parts so that $\text{Re} \lambda_{2n}^y \leq \dots \leq \text{Re} \lambda_1^y< 0$. We define $\gmtrx{\Phi}_2$ as the modal matrix, which consists of the eigenvectors corresponding to each eigenvalue $\lambda_i^y$, for $i\in 1,\dots,2n$. The modeling subspace is fixed to be the slow spectral subspace, i.e., the subspace spanned by the slowest decaying mode, which, in turn, implies that $\text{Re} \lambda_{2n}^y \leq \dots \leq \text{Re}\lambda_1^y<\text{Re}\lambda_{2}^x=\text{Re}\lambda_{1}^x<0$.

\subsection{The SSM-reduced model}
Haller and Ponsioen \cite{nnmssm} give a detailed explanation of different classes of ODEs to which the theory of spectral submanifolds (SSMs) applies. The case relevant for the above setup is a time-periodic slow SSM. To discuss the conditions under which such manifolds exist, we define the absolute spectral quotient of the slow spectral subspace (aligned with the x-degree of freedom) in the form
\begin{equation}
 \Sigma := \text{Int}\frac{\text{Re} \lambda_{2n}^y}{\text{Re} \lambda_1^x},
\end{equation}
with the function $\text{Int}(\cdot)$ referring to the integer part of a real number.

Next, we assume that the low-order non-resonance condition
  \begin{equation}
    \text{Re}\lambda_{i}^y\neq  m_1 \text{Re}\lambda_{1}^x+m_2 \text{Re}\lambda_{2}^x,\quad i=1,\ldots,2n,\quad 2 \leq m_1+m_2 \leq \Sigma,
    \label{ssmnonresonance}
  \end{equation}
holds. Then a two-dimensional, analytic, invariant and attracting time-periodic SSM exists that is unique in the differentiability class $C^{\Sigma+1}$. For a more detailed formulation of absolute and relative spectral quotients of a general spectral subspace, see Haller and Ponsioen \cite{nnmssm}.

\begin{theorem}\label{thrmssm}
Under assumption (\ref{ssmnonresonance}), the single degree-of-freedom, third-order SSM-reduced model of system (\ref{xeq})-(\ref{yeq}) is of the form
\begin{equation}
\begin{split}
m \ddot{x} & +c\dot{x}+kx+p_{20\mtrx{0}\mtrx{0}}x^2+p_{11\mtrx{0}\mtrx{0}}x \dot x+p_{02\mtrx{0}\mtrx{0}} \dot x^2\\
 &+ p_{30\mtrx{0}\mtrx{0}}x^3+p_{21\mtrx{0}\mtrx{0}}x^2 \dot x+p_{12\mtrx{0}\mtrx{0}}x \dot x^2+p_{03\mtrx{0}\mtrx{0}} \dot x^3\\
 & + \left<\left(\gmtrx{\Phi}_2\mtrx{W}\right)^\top\left(x\mtrx{p}_{10\mtrx{I}\mtrx{0}}+\dot{x}\mtrx{p}_{01\mtrx{I}\mtrx{0}}\right),\mtrx{z}^{\otimes 2} \right> \\
 & + \left<\left(\gmtrx{\Phi}_2\mtrx{\tilde{W}}\right)^\top\left(x\mtrx{p}_{10\mtrx{0}\mtrx{I}}+\dot{x}\mtrx{p}_{01\mtrx{0}\mtrx{I}}\right),\mtrx{z}^{\otimes 2} \right> +\mathcal{O}(|(x,\dot x)|^4,\varepsilon|(x,\dot x)|,\varepsilon^2)=\varepsilon F_1 \sin(\phi),        \label{reducedmodelSSM}
\end{split}
\end{equation}
with $\mathbf{z}^{\otimes2}:=\left[x^2,x\dot{x},\dot{x}x,\dot{x}^2\right]^\top$ and with
\begin{equation}
\mtrx{W} = \left[\begin{array}{cccc}
\mtrx{{w}}_{11} & \mtrx{{w}}_{12} & \mtrx{{w}}_{12} & \mtrx{{w}}_{22}\\
\end{array}\right],\quad \tilde{\mtrx{W}} = \left[\begin{array}{cccc}
\mtrx{\tilde{w}}_{11} & \mtrx{\tilde{w}}_{12} & \mtrx{\tilde{w}}_{12} & \mtrx{\tilde{w}}_{22}\\
\end{array}\right],
\end{equation}
where $w_{11,i}$, $w_{12,i}$, $w_{22,i}$ and $\tilde{w}_{11,i}$, $\tilde{w}_{12,i}$, $\tilde{w}_{22,i}$ are defined in (\ref{linearsystem}) and (\ref{wtilde}), respectively.
\end{theorem}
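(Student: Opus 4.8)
The plan is to build the two-dimensional time-periodic SSM as a graph over the reduced coordinates $(x,\dot{x})$ and the forcing phase $\phi=\Omega t$, and then restrict equation (\ref{xeq}) to it. First I would rewrite (\ref{xeq})--(\ref{yeq}) in first-order form and pass to modal coordinates of the $\mtrx{y}$-subsystem via $\mtrx{y}=\gmtrx{\Phi}_2\mtrx{z}$; by the Rayleigh-damping assumption $\mtrx M,\mtrx C,\mtrx K$ are simultaneously diagonalized, so the linear $\mtrx z$-dynamics decouples into planar blocks with eigenvalues $\lambda_i^y$. In these coordinates I would posit the parametrization $\mtrx{y}=\mtrx{h}(x,\dot{x},\phi;\varepsilon)=\mtrx{h}_0(x,\dot{x})+\varepsilon\,\mtrx{h}_1(x,\dot{x},\phi)+\mathcal{O}(\varepsilon^2)$, with $\mtrx{h}_0$ a Taylor polynomial in $(x,\dot{x})$ whose lowest-order part is quadratic, $\mtrx{h}_0=\mtrx{w}_{11}x^2+2\mtrx{w}_{12}x\dot{x}+\mtrx{w}_{22}\dot{x}^2+\mathcal{O}(|(x,\dot{x})|^3)$, together with an analogous velocity graph $\dot{\mtrx{y}}=\dot{\mtrx{h}}$ whose coefficients $\tilde{\mtrx{w}}_{ij}$ are linked to the $\mtrx{w}_{ij}$ through the reduced dynamics. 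Existence, uniqueness and the required smoothness of such an $\mtrx h$ are provided by SSM theory under the non-resonance condition (\ref{ssmnonresonance}) (Haller and Ponsioen \cite{nnmssm}), so only the coefficients have to be computed.

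Next I would impose invariance: substituting the graph into (\ref{yeq}) and eliminating $\ddot{\mtrx{y}}$ by the chain rule, using the reduced dynamics (\ref{reducedmodelSSM}) for $\ddot{x}$, yields, order by order in $(x,\dot{x})$ and in $\varepsilon$, a hierarchy of linear ``homological'' equations for the coefficients of $\mtrx h$. At order $\varepsilon^0$ and quadratic order these are precisely the linear systems (\ref{linearsystem}) and (\ref{wtilde}); their coefficient matrices are invertible exactly because none of the second-order combinations $m_1\text{Re}\lambda_1^x+m_2\text{Re}\lambda_2^x$ equals an eigenvalue of the $\mtrx y$-block, which is what (\ref{ssmnonresonance}) asserts for $\Sigma\ge 2$. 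Solving them produces $\mtrx{w}_{11},\mtrx{w}_{12},\mtrx{w}_{22}$ and $\tilde{\mtrx{w}}_{11},\tilde{\mtrx{w}}_{12},\tilde{\mtrx{w}}_{22}$, which I assemble into $\mtrx W$ and $\tilde{\mtrx W}$ acting on $\mtrx{z}^{\otimes 2}=[x^2,x\dot{x},\dot{x}x,\dot{x}^2]^\top$ as in the statement. The cubic coefficients of $\mtrx h_0$ are not needed, since they influence the reduced equation only at $\mathcal{O}(|(x,\dot{x})|^4)$.

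The reduced model then follows by inserting $\mtrx{y}=\gmtrx{\Phi}_2\mtrx{z}(x,\dot{x},\phi)$ and $\dot{\mtrx{y}}=\gmtrx{\Phi}_2\dot{\mtrx{z}}$ into the nonlinearity $P$ of (\ref{xeq}) and expanding in $(x,\dot{x})$ to cubic order and in $\varepsilon$ to first order. The monomials of $P$ that depend only on $(x,\dot{x})$ reproduce $p_{20\mtrx{0}\mtrx{0}}x^2+\cdots+p_{03\mtrx{0}\mtrx{0}}\dot{x}^3$; the parts of $P$ that are linear in $\mtrx{y}$ or in $\dot{\mtrx{y}}$, namely $\langle x\mtrx{p}_{10\mtrx{I}\mtrx{0}}+\dot{x}\mtrx{p}_{01\mtrx{I}\mtrx{0}},\mtrx{y}\rangle$ and $\langle x\mtrx{p}_{10\mtrx{0}\mtrx{I}}+\dot{x}\mtrx{p}_{01\mtrx{0}\mtrx{I}},\dot{\mtrx{y}}\rangle$, composed with the quadratic graph yield exactly the two inner-product terms built from $(\gmtrx{\Phi}_2\mtrx{W})^\top$ and $(\gmtrx{\Phi}_2\tilde{\mtrx{W}})^\top$ in (\ref{reducedmodelSSM}); every remaining composition is already $\mathcal{O}(|(x,\dot{x})|^4)$ or $\mathcal{O}(\varepsilon|(x,\dot{x})|)$. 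For the $\mathcal{O}(\varepsilon)$ part I would invoke the time-periodic SSM construction of Breunung and Haller \cite{thomas}: to leading order the forced SSM is the autonomous SSM with the $\mathcal{O}(\varepsilon)$ forced response of the linearized full system appended, so that on the manifold equation (\ref{xeq}) keeps the bare external term $\varepsilon F_1\sin\phi$, carrying the modal-participation factor of the $x$-mode, up to $\mathcal{O}(\varepsilon|(x,\dot{x})|,\varepsilon^2)$.

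The step I expect to be the main obstacle is the rigorous treatment of the non-autonomous, $\varepsilon$-dependent part: one must show that the correction $\mtrx{h}_1(x,\dot{x},\phi)$ to the graph does not contribute to the reduced equation at the retained order, i.e., that its net effect is only to reinstate $\varepsilon F_1\sin\phi$, and this is precisely where the results of \cite{thomas} are needed. By contrast, solving the quadratic homological systems and collecting the cubic terms of the composition $P(x,\dot{x},\gmtrx{\Phi}_2\mtrx{z},\gmtrx{\Phi}_2\dot{\mtrx{z}})$ is routine bookkeeping that I would carry out in an appendix, recording the explicit coefficient formulas (\ref{linearsystem}) and (\ref{wtilde}) as a lemma.
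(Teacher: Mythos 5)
Your proposal follows essentially the same route as the paper: posit the time-periodic SSM as a graph over $(x,\dot x,\phi)$ in modal coordinates of the $\mtrx y$-subsystem, expanded first in $\varepsilon$ and then quadratically in $(x,\dot x)$; derive the coefficient equations (\ref{linearsystem}), (\ref{wtilde}) and the periodic $\mathcal{O}(\varepsilon)$ correction from the invariance condition (the paper's Lemmas \ref{lemmassm} and \ref{lemmassm_periodic}, which it keeps in second-order form rather than passing to a first-order system); and obtain (\ref{reducedmodelSSM}) by substituting the graph into $P$, with the non-autonomous correction relegated to $\mathcal{O}(\varepsilon|(x,\dot x)|,\varepsilon^2)$ via the Breunung--Haller construction exactly as you anticipate. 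The argument is correct and matches the paper's proof in structure and in all essential steps.
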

\begin{proof}
We derive this result in Appendix \ref{proof_thrmssm}.
\end{proof}
In the present dissipative case, we could also use a higher-dimensional modeling variable, $\mtrx{x}\in\mathbb{R}^\nu$, $\nu \geq 1$. As an advantage, the corresponding SSM exists under less restrictive non-resonance conditions. By increasing the number of modeling variables, resonances between the modeling and the non-modeling modes can be avoided. We discuss the construction of the SSM-reduced model for $\mtrx{x}\in\mathbb{R}^\nu$ in Appendix \ref{ssm_construct}.

\section{Hardening or softening behavior}
\label{backbonesection}
By smoothness of all terms involved, finite Taylor expansions for the SSM-reduced dynamics necessarily converge to finite expansions of the LSM-reduced dynamics over the same modeling variable $x$, as the damping matrix $\mtrx{C}$ tends to zero. This follows because both SSM and LSM satisfy the same invariance equations and these equations depend smoothly on the dissipative perturbation to their conservative limit. As a consequence, we can determine the hardening or softening nature of a dissipative system through the periodic orbits of the conservative limit.
\subsection{Extracting the backbone curve from the LSM}
Under assumption (\ref{lsmnonresonance}), we define the backbone curve of system (\ref{systemx})-(\ref{systemy}) as the set of ordered pairs $\left(r,\omega \left(r\right)\right)$ for a periodic orbit family on the LSM, starting from the initial condition
\begin{equation}
\left(r,0,\mtrx{w}(r,0),\dot{\mtrx{w}}(r,0)\right) \in \mathcal{W}.
\end{equation}
To obtain an approximation to this curve, we first rewrite  (\ref{lsmreducednonmodal}) as
    \begin{equation}
      \ddot x + \omega_0^2 x + p^{}_{20}x^2 + (p^{}_{30}+\hat p_{30}) x^3+\hat s_{12} x \dot x^2 = 0,
      \label{perturbed}
    \end{equation}
where we introduced the short-hand notation
   \begin{gather}
   p_{20} := p_{2\mtrx{0}},\quad
   p_{30} := p_{3\mtrx{0}},\quad
  \hat{p}_{30} :=  \left<\mtrx{p}_{1\mtrx{I}},\mtrx{w}_{20}\right>, \quad
  \hat s_{12} := \left<\mtrx{p}_{1\mtrx{I}},\mtrx{w}_{02}\right>.
  \end{gather}
With the help of this notation, we obtain the following result
  \begin{theorem} \label{thm:backbone}
    The backbone curve of system (\ref{systemx})-(\ref{systemy}) can be approximated by
    \begin{equation}
    \omega(r)=\omega_0+ \omega_1 r^2+\mathcal{O}(r^3), \label{secondorderomega}
    \end{equation}
    where,
	\begin{equation}
	\omega_1=\frac{9(p^{}_{30}+\hat p_{30})\omega_0^2-10p^{2}_{20}+3\hat s_{12}\omega_0^4}{24\omega_0^3}.
	\end{equation}
  \end{theorem}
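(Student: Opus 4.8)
The plan is to apply the Lindstedt--Poincaré (strained-coordinate) method to the scalar reduced model (\ref{perturbed}), using the orbit amplitude $r$ as the small parameter. First I would rescale $x = r\,\xi$, which turns (\ref{perturbed}) into
\[
\ddot\xi + \omega_0^2\,\xi + p_{20}\,r\,\xi^2 + (p_{30}+\hat p_{30})\,r^2\,\xi^3 + \hat s_{12}\,r^2\,\xi\dot\xi^2 = 0 ,
\]
so that the quadratic nonlinearity enters at order $r$ and the cubic and mixed terms at order $r^2$. I then introduce a strained time $\tau = \omega t$ together with the expansions $\omega = \omega_0 + a_1 r + a_2 r^2 + \mathcal{O}(r^3)$ and $\xi(\tau) = \xi_0(\tau) + r\,\xi_1(\tau) + r^2\,\xi_2(\tau) + \mathcal{O}(r^3)$, imposing $2\pi$-periodicity of each $\xi_j$ in $\tau$ and fixing amplitude and phase by $\xi_0 = \cos\tau$ with $\xi_1,\xi_2$ free of first-harmonic content. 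Since (\ref{perturbed}) is invariant under $\dot x\to-\dot x$, $t\to-t$, the turning-point solution is even in $\tau$, so only $\cos$-harmonics appear and no $\sin\tau$ secular terms arise.

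Collecting powers of $r$: order $r^0$ gives $\xi_0''+\xi_0=0$, hence $\xi_0=\cos\tau$. Order $r^1$ gives $\omega_0^2(\xi_1''+\xi_1) = 2\omega_0 a_1\cos\tau - p_{20}(\tfrac12+\tfrac12\cos2\tau)$; eliminating the resonant $\cos\tau$ term forces $a_1=0$ (this is why the backbone has no term linear in $r$), and the remaining equation yields $\xi_1 = \frac{p_{20}}{6\omega_0^2}(\cos2\tau-3)$. Order $r^2$, using $\omega^2 = \omega_0^2 + 2\omega_0 a_2 r^2 + \mathcal{O}(r^3)$ and $a_1=0$, produces
\[
\omega_0^2(\xi_2''+\xi_2) = 2\omega_0 a_2\cos\tau - 2p_{20}\,\xi_0\xi_1 - (p_{30}+\hat p_{30})\,\xi_0^3 - \hat s_{12}\,\omega_0^2\,\xi_0(\xi_0')^2 .
\]
Expanding the right-hand side with $\cos^3\tau=\tfrac34\cos\tau+\tfrac14\cos3\tau$, $\cos\tau\cos2\tau=\tfrac12\cos\tau+\tfrac12\cos3\tau$, $\cos\tau\sin^2\tau=\tfrac14\cos\tau-\tfrac14\cos3\tau$, and requiring the coefficient of $\cos\tau$ to vanish gives
\[
2\omega_0 a_2 + \frac{5p_{20}^2}{6\omega_0^2} - \frac34(p_{30}+\hat p_{30}) - \frac{\hat s_{12}\omega_0^2}{4} = 0 ,
\]
which solves to $a_2 = \frac{9(p_{30}+\hat p_{30})\omega_0^2 - 10p_{20}^2 + 3\hat s_{12}\omega_0^4}{24\omega_0^3}$. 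Identifying $\omega_1:=a_2$ yields (\ref{secondorderomega}).

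The one point requiring care is the identification of the bookkeeping parameter $r$ with the geometric amplitude used in the definition of the backbone curve, namely the turning-point value of $x$ (where $\dot x=0$). Because $\xi_1$ has nonzero mean, the turning-point value of $\xi$ equals $1+\mathcal{O}(r)$ rather than $1$, so the two amplitude measures agree only to leading order; however, an $\mathcal{O}(r)$ relative reparametrization of the amplitude perturbs the coefficient of $r^2$ in $\omega(r)$ only at order $r^3$, which is absorbed into the stated error term, so $\omega_1$ is unaffected. I would flag this explicitly; the remaining work is routine harmonic bookkeeping. As a consistency check I would note that the formula reduces to the classical backbone coefficients $\omega_1=3\beta/(8\omega_0)$ for a pure Duffing term, $\omega_1=-5\alpha^2/(12\omega_0^3)$ for a pure quadratic term, and $\omega_1=\hat s_{12}\omega_0/8$ for the pure $x\dot x^2$ term, each of which can alternatively be recovered by a one-term harmonic-balance or averaging computation.
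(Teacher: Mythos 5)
Your proposal is correct and follows essentially the same route as the paper's proof in Appendix G: a Lindstedt--Poincar\'e expansion in strained time with elimination of the secular $\cos\tau$ terms at first and second order, yielding $a_1=0$ and the stated $\omega_1$. The only differences are cosmetic --- you use $r$ itself as the small parameter via $x=r\xi$ and normalize $\xi_1$ to be free of first harmonics, whereas the paper introduces a bookkeeping parameter $\epsilon$ and fixes $\phi(0)=r$, $\phi'(0)=0$ --- and your closing remark on the amplitude reparametrization correctly shows that this choice cannot affect the coefficient of $r^2$.
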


  \begin{proof}
We derive this result in Appendix \ref{app:backbone}.
  \end{proof}
We use Theorem \ref{thm:backbone} to generate frequency-energy plots for a conservative system in section \ref{oscchainexample}. Additionally, we observe that the second-order LSM approximation (\ref{secondorderomega}) takes the form of a diagonal quadratic form (see $\mtrx{w}_{11}=\mtrx 0$ in Lemma \ref{lsmcoefflemma}) for each non-modeling mode, which gives that the eigendirections are aligned with the $x$ and $\dot x$ axes. Therefore the periodic orbits on the LSM take their positional extrema in the $x$ direction for each non-modeling mode. To compute this extremum we merely need to take the values that this second order approximation takes in the first eigendirection ($x$ axis). This gives the second-order amplitude estimate for the non-modeling modes
  \begin{equation}
    r_{\eta_i}=\alpha_i r^2, \label{estimate}
  \end{equation}
along periodic orbits on the LSM. Knowing the potential for the forces in a conservative mechanical system together with the estimate (\ref{estimate}) gives a fourth-order estimate of the the energy of the periodic orbits of the LSM.

\subsection{Comparison of the backbone curves obtained from different model reduction methods}
Here we compute the backbone curves of system (\ref{modalx})-(\ref{modaly}) obtained under different reduction methods, using the approach explained in the proof of Theorem \ref{thm:backbone}. This approach leads to the general approximation formula
\begin{equation}
\omega(r)=\omega_0+ \omega_2 r^2+\mathcal{O}(r^3),
\end{equation}
where $\omega_2$ varies with the reduction method used, as summarized in Table \ref{omega2}.
\begin{table}[H]
\def\arraystretch{1.5}%
  \centering
  \begin{tabular}{|c|c|}
    \hline
    Method & $\omega_2$\\\hline
    LSM & \multirow{2}{*}{$\dfrac{9(r^{}_{3\mtrx{0}}-\left<\mtrx r_{1\mtrx{I}}, (\gmtrx{\Omega}^2\mtrx{D}_4)^{-1}\mtrx{D}_2 \mtrx s_{2\mtrx{0}}\right>)\omega_0^2-10r^{2}_{2\mtrx{0}}+6 \left<\mtrx r_{1\mtrx{I}}, (\gmtrx{\Omega}^2\mtrx{D}_4)^{-1} \mtrx s_{2\mtrx{0}}\right>\omega_0^4}{24\omega_0^3}$}\\
    Normal forms & \\ \hline
    MDs & $\dfrac{9(r_{3\mtrx{0}}-\left<\mtrx r_{1\mtrx{I}}, \gmtrx \Phi \gmtrx \Omega^{-2} \mtrx s_{2\mtrx{0}}\right>)\omega_0^2-10r^{2}_{2\mtrx{0}}}{24\omega_0^3}$ \\ \hline
  \end{tabular}
  \caption{Hardening coefficients of different model reduction methods.}
  \label{omega2}
\end{table}
Table \ref{omega2} shows that SSM theory and normal forms give the same result for $\omega_2$, while the prediction for $\omega_2$ from modal derivatives differs substantially.

\section{Numerical simulations}
We have implemented the model reduction formulas obtained in the previous sections in Julia \cite{julia}. In this section, we illustrate the power of these formulas on several examples. In each case, the full reduced and linearized models are solved numerically using the DifferentialEquations.jl \cite{diffeqjl} package.

\subsection{An analytic example}
\label{example8}

Haller and Ponsioen \cite{slowfast} gave a minimal nontrivial example of a system on which SSM reduction can be simply demonstrated. The system is of the form
\begin{align}
  \ddot x + \left(c_1+\mu_1 x^2\right)\dot x+k_1 x+axy+b x^3 =0\label{xeqexample8},\\
  \ddot y+ c_2 \dot y+ k_2 y + c x^2=0,\label{yeqexample8}
\end{align}
which is already in modal coordinates, and hence we have $\gmtrx\Phi=\mtrx I$. The $\dot x y$, $x \dot y$ and $\dot x \dot y$ terms in equation (\ref{xeqexample8}) are absent. All second-order nonlinearities in $x,\dot x $ are zero, therefore $p_{20\mtrx{0}\mtrx{0}}=p_{11\mtrx{0}\mtrx{0}}=p_{02\mtrx{0}\mtrx{0}}=0$. In addition, $p_{03\mtrx{0}\mtrx{0}}=p_{12\mtrx{0}\mtrx{0}}=0$ and $p_{21\mtrx{0}\mtrx{0}}=\mu_1$ and $p_{30\mtrx{0}\mtrx{0}}=b$. The linear part in the modeling mode $x$ is characterized by $2\zeta\omega=c_1$ and $\omega^2=k_1$.

Using equation (\ref{reducedmodelSSM}), we obtain the reduced-order model in the form
\begin{equation}
\begin{split}
  \ddot x + c_1 \dot x+ k_1 x+\left(b+ a w_{11}\right)x^3+\left(\mu_1+2a w_{12} \right) x^2 \dot x+ \left(a w_{22} \right) x \dot x^2 + \mathcal{O}(4)=0, \label{reducedexample8}
\end{split}
\end{equation}
in agreement with \cite{slowfast}. We show a simulation of the reduced dynamics on the SSM in Figure \ref{3dexamplesimulation}. Additionally, we show a trajectory of the full system, illustrating its convergence to the reduced dynamics on the SSM.

\begin{figure}[H]
\centering
\includegraphics[scale=.2]{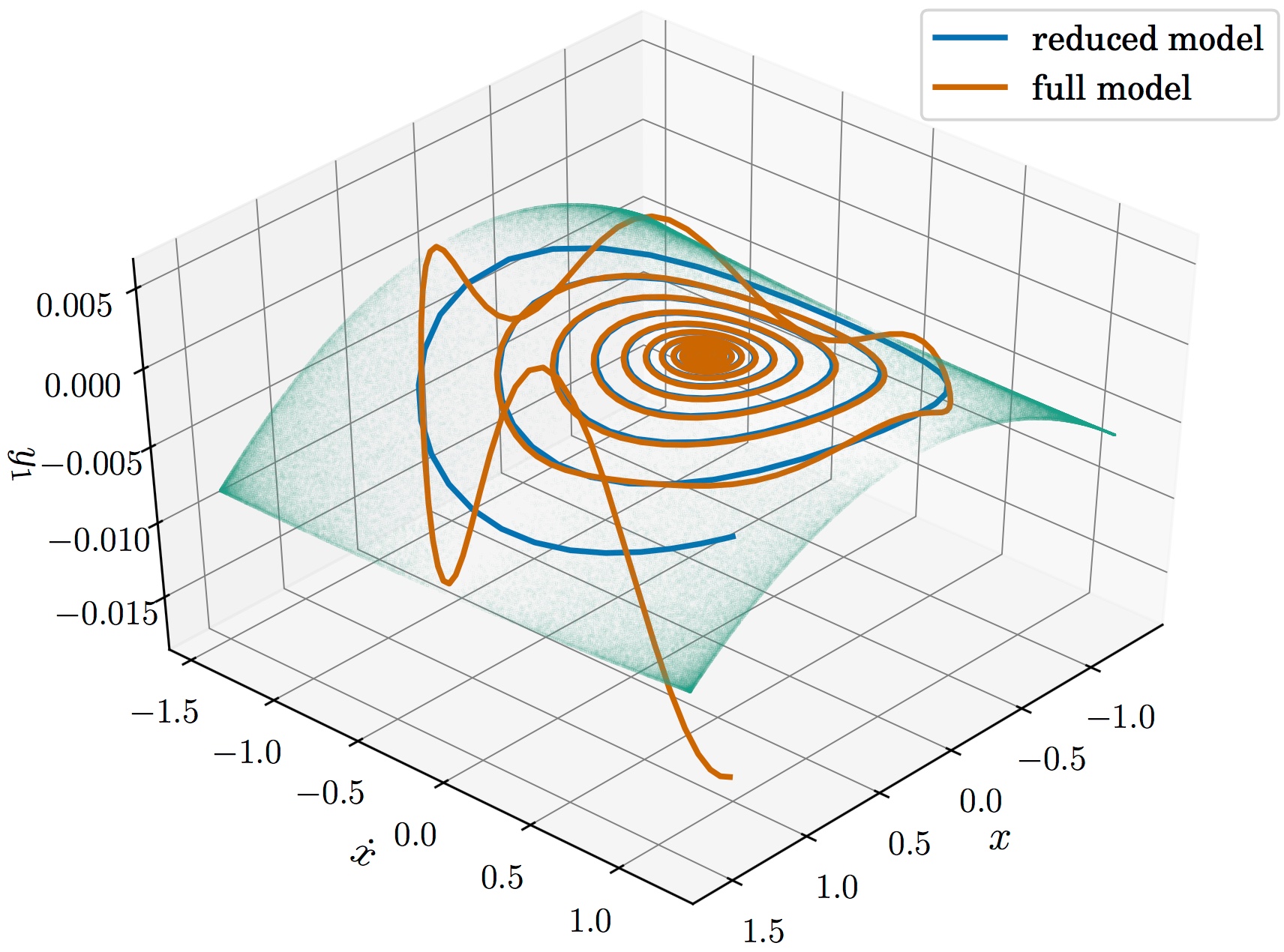}
\caption{Synchronization between near-SSM trajectories of system (\ref{xeqexample8})-(\ref{yeqexample8}) and its reduced model (\ref{reducedexample8}). The dotted green surface is the second-order approximation of the SSM. The parameter values are $c_1=0.1, \mu_1=0.02, k_1=1, a=0.06, b=0.02, c_2=1.2, k_2=27, c=0.08$.}
\label{3dexamplesimulation}
\end{figure}

All of our reduced models are defined locally around a fixed point. Their domain of validity reduces as the parameters approach a configuration in which the non-resonance conditions (\ref{ssmnonresonance}) no longer hold. In the resonant case, the reduced models are not defined anymore. Using the same example, we demonstrate the breakdown of the reduced model in a near-resonant case, where we show that the response of the reduced model is inaccurate even when the full response of the system is in the linear regime (see Figure \ref{nearresonance}).

\begin{figure}[H]
\centering
\includegraphics[scale=.65]{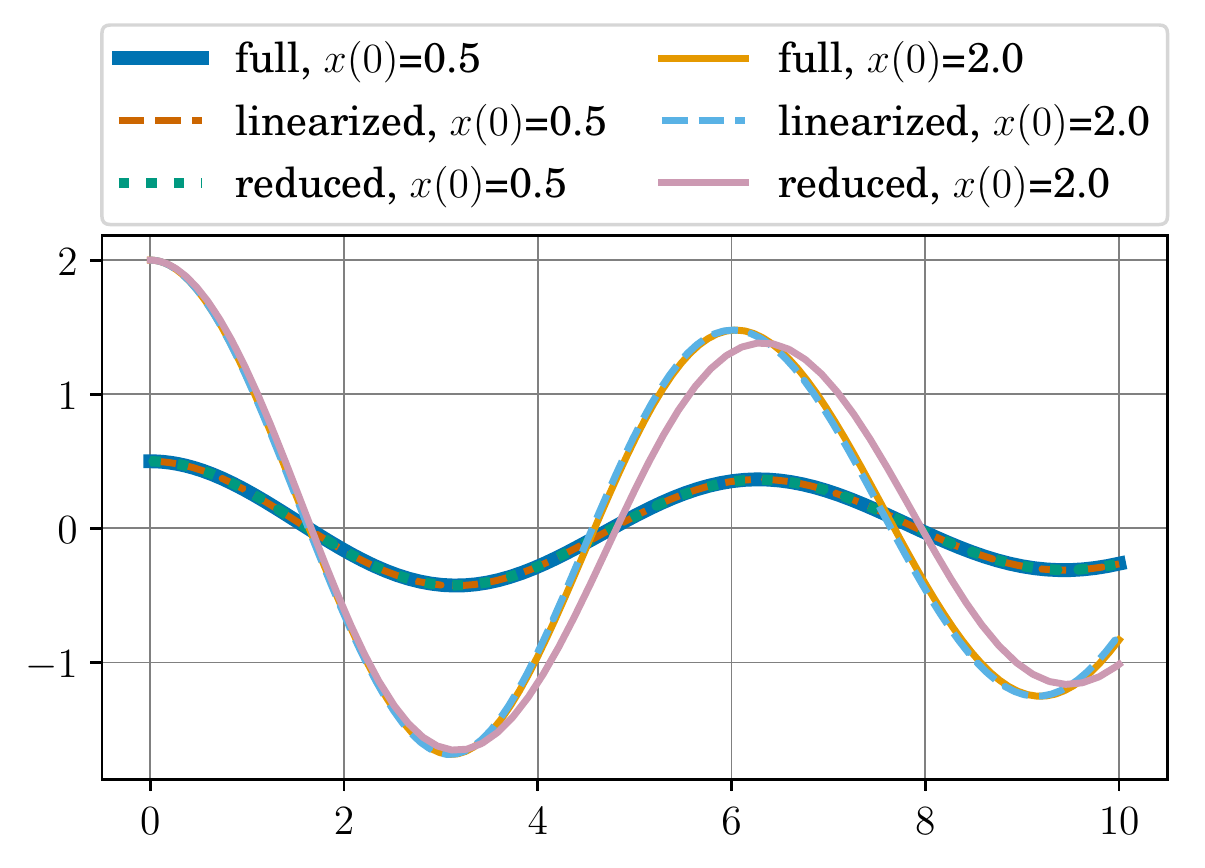}
\caption{Solutions of system (\ref{xeqexample8})-(\ref{yeqexample8}) in the near-resonant ($1:2.01$) case with a resonance parameter $r=2.01$. The parameter values are $c_1=0.1, \mu_1=0.02, k_1=1, a=0.06, b=0.02, c_2=r c_1, k_2=r^2 k_1, c=0.08$.}
\label{nearresonance}
\end{figure}

\subsection{Nonlinear oscillator chain}
\label{oscchainexample}

We continue with a twelve-degree-of-freedom, third-order nonlinear mechanical system of the form
  \begin{equation}
    \begin{split}
      m \ddot q_j+&c \left(2 \dot q_j-\dot q_{j+1}-\dot q_{j-1}\right)+k \left(2 q_j- q_{j+1}- q_{j-1}\right) \\+& \kappa_2 \left(q_j-q_{j-1}\right)^2- \kappa_2 \left(q_{j+1}-q_{j}\right)^2 \\+& \kappa_3 \left(q_j-q_{j-1}\right)^3+ \kappa_3 \left(q_j-q_{j+1}\right)^3=0,\quad j=1,\dots, 12,\quad q_0\equiv 0,\quad q_{13}\equiv 0, \label{syschain}
    \end{split}
  \end{equation}
describing the mechanical model shown in Figure \ref{oscchainmechmodel}. In this system, all the mass, spring and damping coefficients are identical, i.e., $m=k=c=1$. The springs admit a potential function of the form
\begin{equation}
V=\frac{1}{2} k (q_j-q_{j-1})^2+\frac{1}{3} \kappa_2 (q_j-q_{j-1})^3+\frac{1}{4} \kappa_3 (q_j-q_{j-1})^4.
\end{equation}

As system (\ref{syschain}) is not in the form of (\ref{xeq})-(\ref{yeq}), a linear transformation is needed first to linearly decouple the modeling mode from the non-modeling ones. Since the mass matrix is diagonal, while the stiffness and damping matrices are tridiagonal T\"oplitz matrices, we can compute the modal matrix by using the closed form formula (cf. \cite{toplitz})

\begin{equation}
  \Phi_{ij}=\sin{\frac{i j \pi}{n+1}}.
\end{equation}

For the conservative limit of system (\ref{syschain}), we compute the second-order approximated backbone curves and verify our result via numerical continuation using the \texttt{po} toolbox of \textsc{coco} \cite{coco} (see Figure \ref{oscchaibackbone}). To quantify the region of validity of these approximations, we refer to Figure \ref{oscchainfreqenergy}, where we observe an increase in energy of two orders of magnitude into the nonlinear regime for both the hardening and the softening systems.

Additionally, we numerically simulate the original damped system, the reduced system based on modal derivatives and the reduced system based on SSM theory. The mismatch between the modal derivatives and the SSM-reduced model is clearly visible in Figure \ref{12dofoscchain}.



  \begin{figure}[H]
    \centering
    \includegraphics[scale=1]{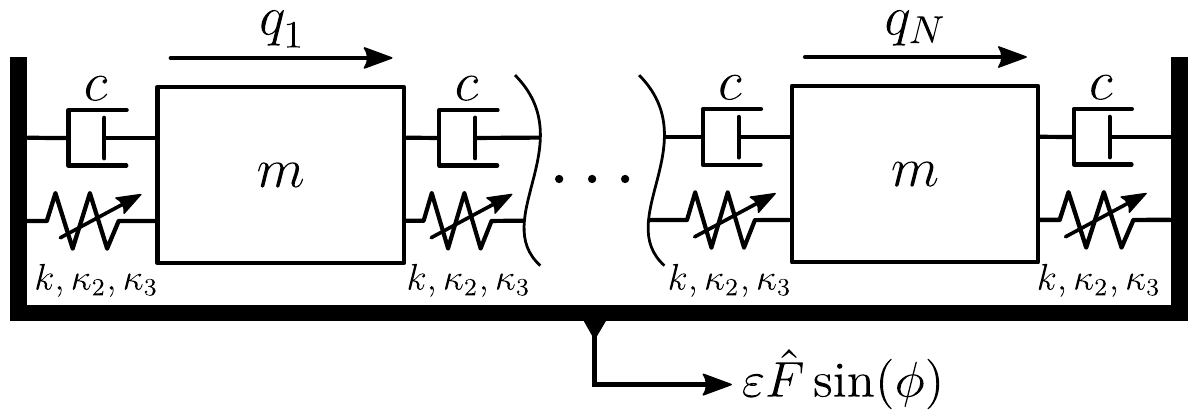}
    \caption{Illustration of the nonlinear oscillator chain described by eq. (\ref{syschain}).}
    \label{oscchainmechmodel}
  \end{figure}

    \begin{figure}[H]
      \centering
      \includegraphics[scale=.55]{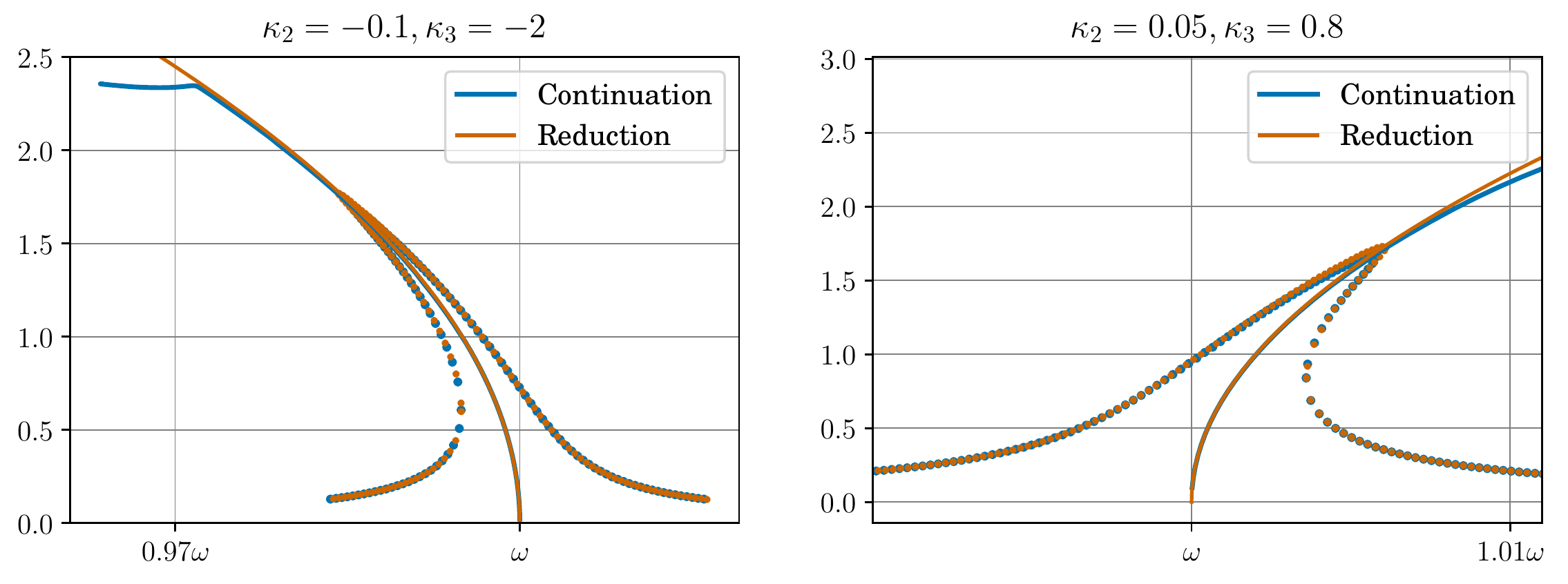}
      \caption{Conservative backbone curves and force response curves of the damped system (\ref{syschain}) obtained using \textsc{coco} and the LSM/SSM reduced model. The forcing is in both cases $\hat F \epsilon = 0.0013$ and the damping is $c=0.01$}
      \label{oscchaibackbone}
    \end{figure}

  \begin{figure}[H]
    \centering
    \includegraphics[scale=.55]{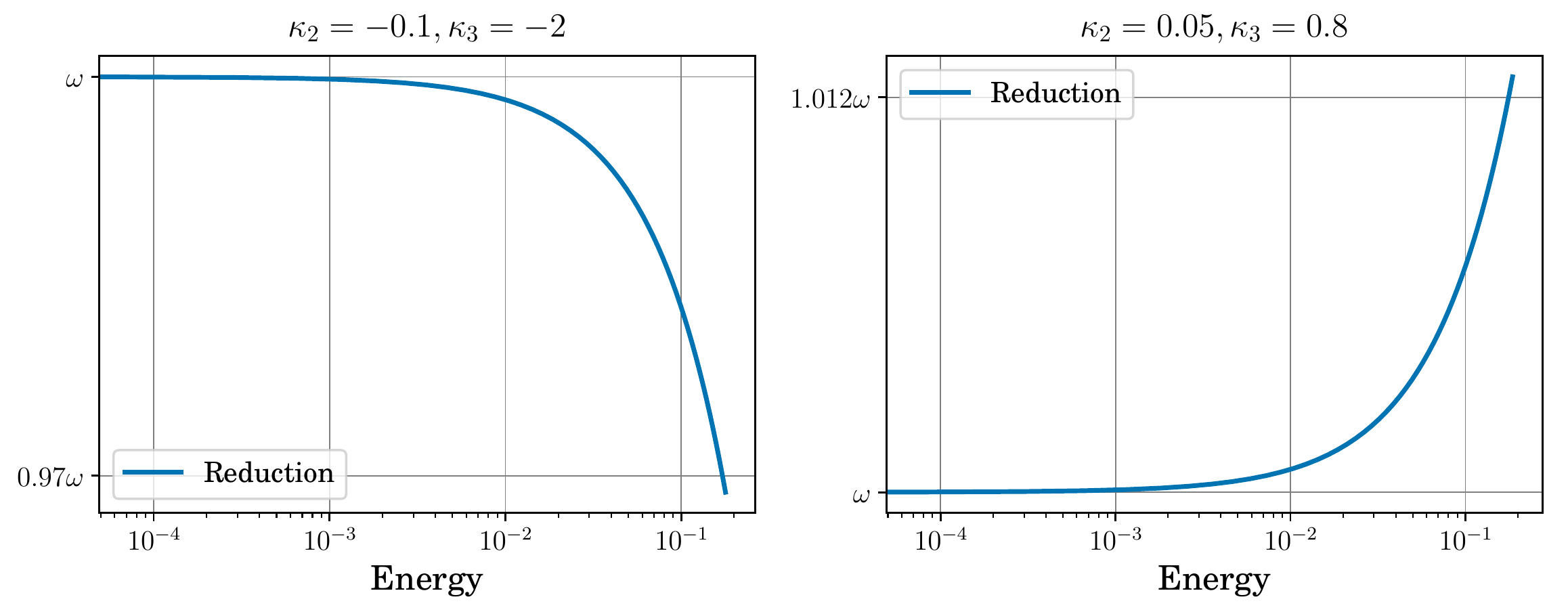}
    \caption{Frequency-energy diagrams based on the LSM reduced model for the hardening and softening example of Figure \ref{oscchaibackbone}.}
    \label{oscchainfreqenergy}
  \end{figure}

\begin{figure}[H]
  \centering
  \includegraphics[scale=.55]{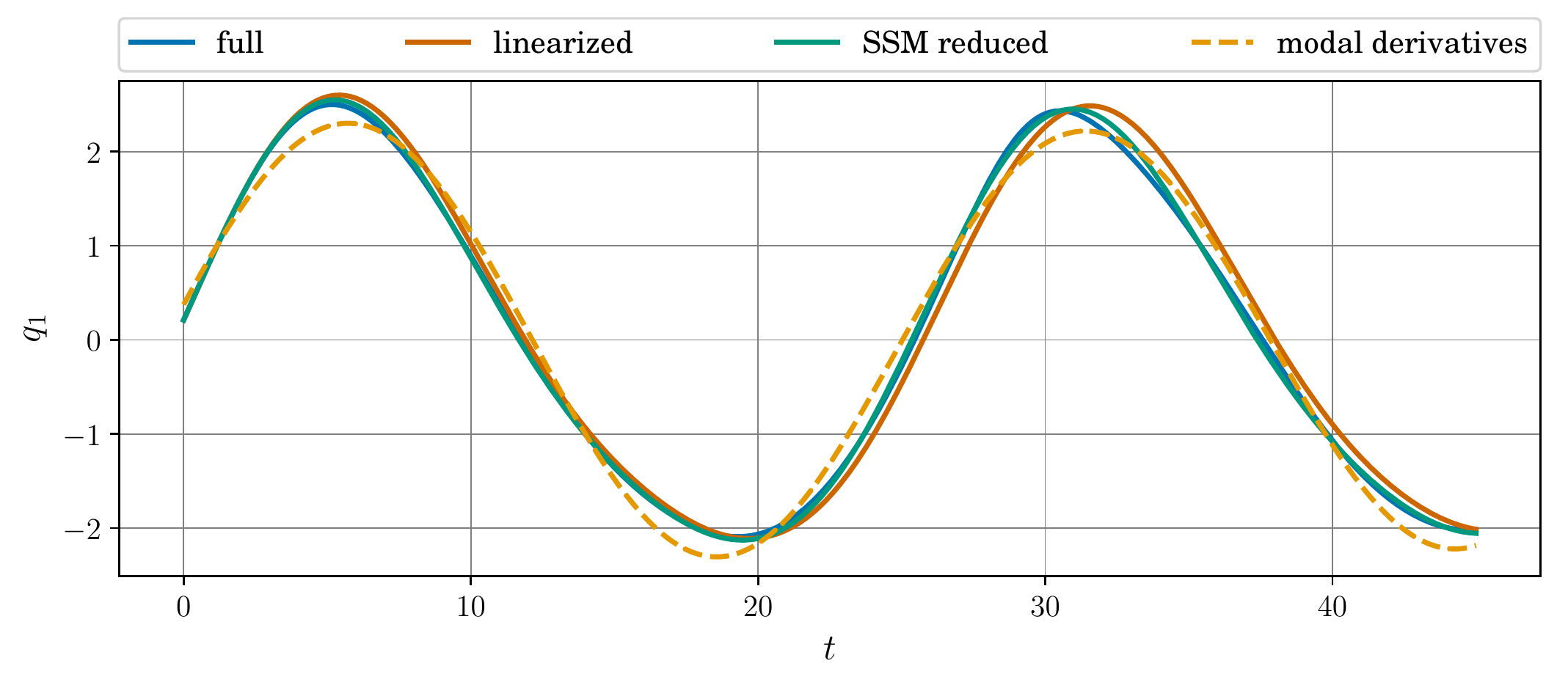}
  \caption{First coordinate against time in the numerical simulation of the 12-DoF oscillator chain (\ref{syschain}) showing the full system, the SSM-reduced, the MD-reduced and the linearized system to show the highly nonlinear character of the system. Parameters used: $c=0.05,\kappa_2=0.01 $ and  $\kappa_3=0.01$}
  \label{12dofoscchain}
\end{figure}

\subsection{Nonlinear Timoshenko beam}
  \label{examplebeam}
In this section, we construct an SSM-reduced model for a discretized nonlinear Timoshenko beam consisting of three elements, resulting in a 32-dimensional phase space (cf. Ponsioen et al. \cite{Ponsioen2018}).
The chosen beam parameter values are listed in Table \ref{tab:system_par_beam_ex}.
\begin{table}[H]
\begin{centering}
\begin{tabular}{|c|c|}
\hline
Parameter & Value\tabularnewline
\hline
\hline
$L$ & $\unit[1200]{mm}$\tabularnewline
\hline
$h$ & $\unit[40]{mm}$\tabularnewline
\hline
$b$ & $\unit[40]{mm}$\tabularnewline
\hline
$\rho$ & $\unit[7850\cdot10^{-9}]{kg\text{ }mm^{-3}}$\tabularnewline
\hline
$E$ & $\unit[90]{GPa}$\tabularnewline
\hline
$G$ & $\unit[34.6]{GPa}$\tabularnewline
\hline
$\eta$ & $\unit[13.4]{MPa\text{ }s} $\tabularnewline
\hline
$\mu$ & $\unit[8.3]{MPa\text{ }s}$\tabularnewline
\hline
\end{tabular}
\par\end{centering}
\caption{Geometric and material parameters.
 \label{tab:system_par_beam_ex}}
\end{table}
Here $L$ is the length of the beam; $h$ the height of the beam; $b$ the with of the beam; $\rho$ the density; $E$ the Young's modulus; $G$ the shear modulus; $\eta$  the axial material damping constant and $\mu$ the shear material damping constant.

For the simulation of the full stiff system, we used a fourth-order \cite{KenCarp4} ESDRIK integrator with a relative and absolute tolerance of $10^{-8}$. The reduced system was integrated with an adaptive $7/6$ Runge-Kutta scheme \cite{Vern7} where we set the tolerances equal to $10^{-12}$. With these settings a speed up factor of 2820 was obtained (from $197$s to $0.07$s) between the integration of the full system and the reduced model, see Figure \ref{fig:beam}.

\begin{figure}[H]
\centering
\includegraphics[scale=0.9]{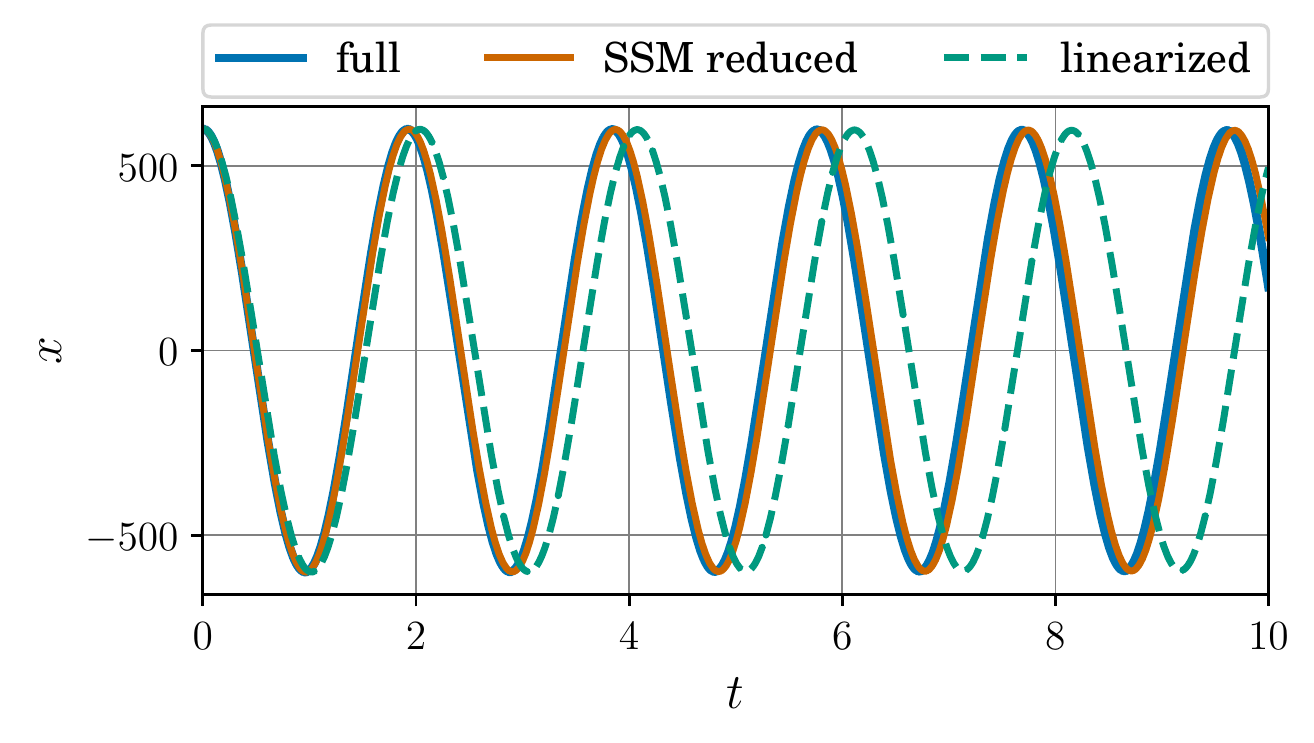}
\caption{Simulation results for the time histories of the 16 DoF nonlinear Timoshenko beam. \label{fig:beam}}
\end{figure}
In Figure \ref{fig:beam_backbone}, we compare the backbone curve extracted from our LSM reduced model with the numerically continued backbone curve using the \texttt{po} toolbox of \textsc{coco}. In this case the damping parameters have been set to zero to obtain a conservative system, required for LSM reduction and the numerical backbone curve continuation.
\begin{figure}[H]
\centering
\includegraphics[scale=0.9]{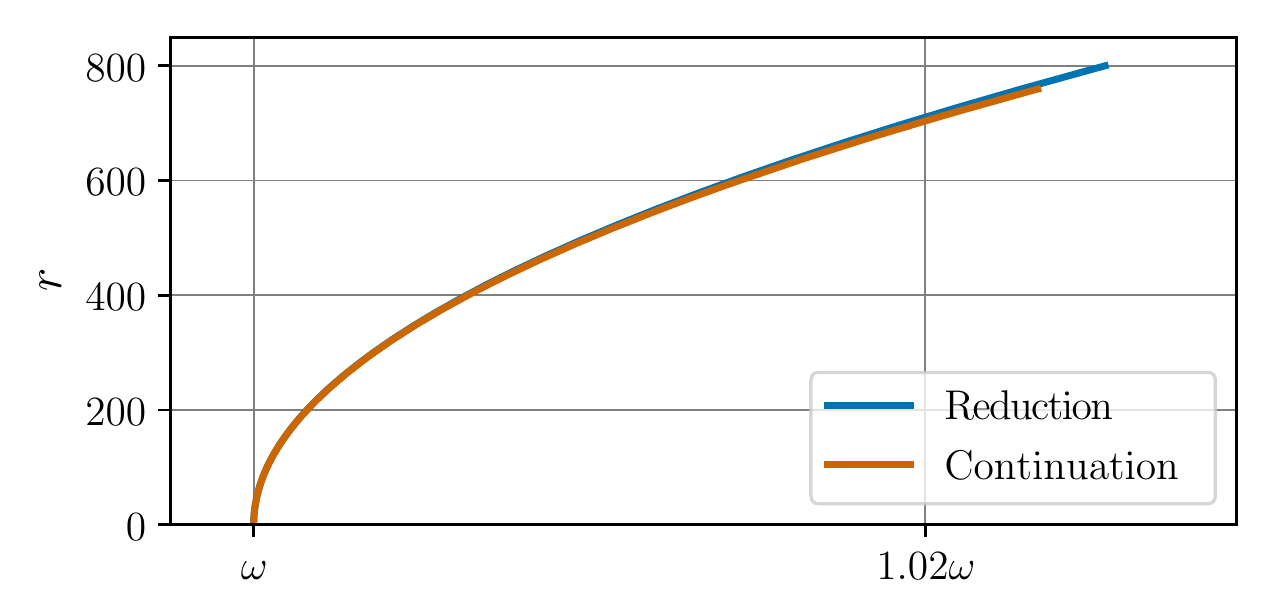}
\caption{Backbone curves of the 16-DoF nonlinear Timoshenko beam based on numerical continuation and the perturbation approach of the conservative limit detailed in section \ref{backbonesection}.\label{fig:beam_backbone}}
\end{figure}


\section{Conclusions}
We have shown that the third-order LSM- and SSM-reduced models accurately capture backbone curves and forced-response curves, respectively, of multi-degree-of-freedom mechanical systems, including a higher-dimensional nonlinear Timoshenko beam, while remaining simple to implement numerically. An advantage of our reduction method is that no near-identity transformations are needed, as opposed to normal-form-based reduction methods. The reduction method keeps the original modeling coordinate as a physically meaningful modal coordinate, resulting in a reduced model that only depends on physical and modal parameters.

Our LSM-reduction method can also be applied when the modal transformation matrix is unfeasible to compute. This is often the case for large systems in which such computations are expensive and often numerically inaccurate. In the conservative case, we have identified conditions under which the formal modal-derivatives-based reduction gives a reasonable approximation of the exact LSM reduction. Additionally, we have shown under which conditions the third-order LSM reduced model is actually a Duffing-oscillator, supporting experimental observations in the literature.

We rigorously justified that in a leading-order approximation, the SSM-reduced system of a periodically forced mechanical system can be seen as the autonomous SSM-reduced system with the modal-participation factor of the first mode added to the reduced system, which justifies the proposed normal form method by Touz{\'e} and Amabili \cite{touze2006nonlinear}. An important further development of the present results will allow for the inclusion of quasi-periodic forcing in system (\ref{systemx})-(\ref{systemy}), as the results of Haller and Ponsioen \cite{nnmssm} are general enough to allow for such forcing.

\appendix
\section{Derivation of the LSM coefficients in the general case}\label{app:der_lsm}
\begin{lemma}
  \label{lsmcoeffequationslemma}
  Under assumption (\ref{lsmnonresonance}), the unknown coefficients in (\ref{3rdapprox}) satisfy $\mtrx w_{11}=\mtrx w_{21}=\mtrx w_{03}=\mtrx 0$ and the linear equations

  \begin{equation}
    \begin{bmatrix}
      \mtrx w_{20}\\
      \mtrx w_{02}
    \end{bmatrix}
    =
    \begin{bmatrix}
      \gmtrx \Omega_\text{p}^2-2\omega^2 \mtrx I&2\omega^4 \mtrx I\\
      2 \mtrx I &\gmtrx \Omega_\text{p}^2-2\omega^2 \mtrx I\\
    \end{bmatrix}^{-1}
    \begin{bmatrix}
      -\mtrx{M}^{-1}\mtrx q_{2\mtrx{0}}\\
      \mtrx 0
      \end{bmatrix},
      \label{lsmequation2nd}
  \end{equation}

    \begin{equation}
      \begin{bmatrix}
        \mtrx w_{30}\\
        \mtrx w_{12}
      \end{bmatrix}
      =
      \begin{bmatrix}
        \gmtrx \Omega_\text{p}^2-3\omega^2 \mtrx I&2\omega^4 \mtrx I\\
        6 \mtrx I &\gmtrx \Omega_\text{p}^2-7\omega^2 \mtrx I\\
      \end{bmatrix}^{-1}
      \left(
      \begin{bmatrix}
        2{p}_{2\mtrx{0}}\mtrx{I}-\tilde{\mtrx{Q}}_{1\mtrx{I}} & -4\omega^2 p_{2\mtrx{0}} \\
        0&4p_{2\mtrx{0}}\mtrx{I}-\mtrx{M}^{-1}\tilde{\mtrx{Q}}_{1\mtrx{I}}
        \end{bmatrix}
        \begin{bmatrix}
          \mtrx{w}_{20}\\ \mtrx{w}_{02}
        \end{bmatrix}-
        \begin{bmatrix}
         \mtrx{M}^{-1} \mtrx{q}_{3\mtrx{0}}\\0
        \end{bmatrix}\right),
            \label{lsmequation3rd}
    \end{equation}
with
\begin{equation}
\gmtrx{\Omega}^2_\text{p} = {\mtrx{M}^{-1}\mtrx{K}},\quad \tilde{\mtrx{Q}}_{1\mtrx{I}}=\left[\mtrx{q}_{1\mtrx{e}_1},\ldots,\mtrx{q}_{1\mtrx{e}_n}\right]\in\mathbb{R}^{n\times n}.
\end{equation}
\end{lemma}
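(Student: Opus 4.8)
The plan is to impose invariance of the graph $\mtrx y=\mtrx w(x,\dot x)$ directly on (\ref{systemy}), using (\ref{systemx}) to eliminate every acceleration of $x$ in favour of the reduced coordinates $(x,\dot x)$, and then to match Taylor coefficients monomial by monomial. Concretely I would write $\dot{\mtrx y}=\partial_x\mtrx w\,\dot x+\partial_{\dot x}\mtrx w\,\ddot x$, differentiate once more to get $\ddot{\mtrx y}=\mtrx w_{xx}\dot x^2+2\mtrx w_{x\dot x}\dot x\ddot x+\mtrx w_{\dot x\dot x}\ddot x^2+\mtrx w_x\ddot x+\mtrx w_{\dot x}\dddot x$, and substitute $\ddot x=-\omega^2 x-P(x,\mtrx w(x,\dot x))$ together with its time derivative wherever an acceleration or jerk of $x$ appears. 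Inserting the result into $\mtrx M\ddot{\mtrx y}+\mtrx K\mtrx y+\mtrx Q(x,\mtrx w)=\mtrx 0$ gives a single identity in $(x,\dot x)$ that must hold through cubic order; expanding $\mtrx w$ as in (\ref{3rdapprox}) and $P,\mtrx Q$ as in (\ref{notation_1}) and collecting the monomials $x^2,x\dot x,\dot x^2$ and then $x^3,x^2\dot x,x\dot x^2,\dot x^3$ produces one vector equation per monomial.

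At quadratic order the $x\dot x$ balance is homogeneous, of the form $(\gmtrx\Omega_\text{p}^2-4\omega^2\mtrx I)\mtrx w_{11}=\mtrx 0$, so $\mtrx w_{11}=\mtrx 0$; the $x^2$ and $\dot x^2$ balances couple $\mtrx w_{20}$ and $\mtrx w_{02}$ exactly into the $2\times2$ block system (\ref{lsmequation2nd}), whose off-diagonal $2\omega^4\mtrx I$ block comes from the $\mtrx w_{\dot x\dot x}\ddot x^2$ and $\mtrx w_{\dot x}\dddot x$ terms acting on the linear part $-\omega^2 x$ of the reduced dynamics. A quicker way to see which coefficients vanish is to note that (\ref{systemx})--(\ref{systemy}) is reversible under $(x,\dot x,\mtrx y,\dot{\mtrx y})\mapsto(x,-\dot x,\mtrx y,-\dot{\mtrx y})$ together with $t\mapsto-t$; by uniqueness of the LSM, $\mtrx w$ is then even in $\dot x$, which kills $\mtrx w_{11},\mtrx w_{21},\mtrx w_{03}$ simultaneously.

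At cubic order the same mechanism, now using $\mtrx w_{11}=\mtrx 0$, renders the $x^2\dot x$ and $\dot x^3$ balances homogeneous, giving $\mtrx w_{21}=\mtrx w_{03}=\mtrx 0$, while the $x^3$ and $x\dot x^2$ balances yield (\ref{lsmequation3rd}). Its right-hand side is simply an assembly of the two channels through which the already-determined quadratic data $\mtrx w_{20},\mtrx w_{02}$ re-enter: the quadratic self-interaction $p_{2\mtrx 0}$ of the $x$-equation, which feeds back through $\ddot x$ when $\mtrx y$ is differentiated (producing the $2p_{2\mtrx 0}\mtrx I$, $4p_{2\mtrx 0}\mtrx I$ entries and the $-4\omega^2p_{2\mtrx 0}$ coupling), and the $\mtrx y$-linear part of $\mtrx Q$ evaluated on $\mtrx w_2$ (producing the $\tilde{\mtrx Q}_{1\mtrx I}$ entries), plus the genuinely cubic forcing $\mtrx q_{3\mtrx 0}$. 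I expect the main obstacle to be precisely this cubic-order bookkeeping: because $\partial_{\dot x}\mtrx w$ is itself first order, the jerk term $\mtrx w_{\dot x}\dddot x$ and the cross term $\mtrx w_{x\dot x}\dot x\ddot x$ contribute at both quadratic and cubic order, and it is their interleaving with the $p_{2\mtrx 0}$ feedback and the $\tilde{\mtrx Q}_{1\mtrx I}$ term that generates the non-obvious matrix entries ($2\omega^4\mtrx I$, $-7\omega^2$, $-4\omega^2 p_{2\mtrx 0}$); the rest is a routine, if lengthy, collection of like terms.

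Finally I would verify that the block matrices in (\ref{lsmequation2nd}) and (\ref{lsmequation3rd}) are invertible. Since $\gmtrx\Omega_\text{p}^2=\mtrx M^{-1}\mtrx K$ has eigenvalues $\omega_1^2,\dots,\omega_n^2$, each block matrix decomposes eigendirection by eigendirection into a scalar $2\times2$ matrix; its determinant equals $\omega_i^2(\omega_i^2-4\omega^2)$ for the quadratic system and $(\omega_i^2-\omega^2)(\omega_i^2-9\omega^2)$ for the cubic one (and the $x\dot x$ coefficient $\gmtrx\Omega_\text{p}^2-4\omega^2\mtrx I$ has eigenvalues $\omega_i^2-4\omega^2$). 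All of these are nonzero precisely when $\omega_i/\omega\notin\{1,2,3\}\subset\mathbb Z$, which is guaranteed by assumption (\ref{lsmnonresonance}); hence the coefficients are uniquely determined and the stated formulas follow.
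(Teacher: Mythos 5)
Your proposal is correct and follows essentially the same route as the paper's Appendix~\ref{app:der_lsm}: differentiate the graph $\mtrx y=\mtrx w(x,\dot x)$ twice along trajectories, eliminate $\ddot x$ and $\dddot x$ via the restricted $x$-equation, substitute into (\ref{systemy}), and match monomial coefficients; the quadratic and cubic balances you describe reproduce the paper's equations, including the homogeneous $x\dot x$, $x^2\dot x$ and $\dot x^3$ balances that force $\mtrx w_{11}=\mtrx w_{21}=\mtrx w_{03}=\mtrx 0$. You add two worthwhile things the paper does not spell out here. First, the reversibility shortcut: the conservative system is invariant under $(x,\dot x,\mtrx y,\dot{\mtrx y},t)\mapsto(x,-\dot x,\mtrx y,-\dot{\mtrx y},-t)$, so by uniqueness of the analytic LSM the graph $\mtrx w$ must be even in $\dot x$, which kills all three odd coefficients at once; this is cleaner than the paper's direct route, which for the cubic pair $(\mtrx w_{21},\mtrx w_{03})$ actually requires noting that the coupled homogeneous system is nonsingular (per eigendirection its determinant is $(\omega_i^2-\omega^2)(\omega_i^2-9\omega^2)$, nonzero by (\ref{lsmnonresonance})) --- a point the paper glosses over. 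Second, your explicit invertibility check of the block systems, with determinants $\omega_i^2(\omega_i^2-4\omega^2)$ and $(\omega_i^2-\omega^2)(\omega_i^2-9\omega^2)$ per eigendirection of $\gmtrx\Omega_\text{p}^2$, is accurate and makes precise exactly which resonances ($1{:}1$, $1{:}2$, $1{:}3$, and $\omega_i=0$) are excluded by (\ref{lsmnonresonance}); the paper defers this observation to the modal-coordinate version in Appendix~\ref{app:lsm_avai}. The remaining bookkeeping you describe matches the paper's computation, so the proposal stands.
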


\begin{proof}

  \label{lsmderivation}
  In the following, we use the invariance of $\mathcal{W}$ to derive the LSM coefficients listed in lemma \ref{lsmcoefflemma}. Differentiating equation (\ref{3rdapprox}) with respect to time, we obtain, for the $i^{\text{th}}$ element of $\dot{\mtrx{y}}$,
    \begin{equation}
      \dot y_i=
      \left<\text{D}w_i,
      \begin{bmatrix}
        \dot x \\ \ddot x
      \end{bmatrix}\right> +\mathcal{O} (|(x,\dot x)|^4),
    \end{equation}
  where the gradient of $w_i$ is
      \begin{equation}
        \text{D}w_i=
        \begin{bmatrix}
          2w_{20,i} x+ w_{11,i}\dot x+3w_{30,i} x^2 + 2 w_{21,i} x \dot x+  w_{12,i} \dot x^2\\
          w_{11,i} x+2w_{02,i}  \dot x +  w_{21,i} x^2+2  w_{12,i} x \dot x+3  w_{03,i} \dot x^2
        \end{bmatrix}.
      \end{equation}
  Similarly, the second time derivative can be written as
      \begin{equation}
        \ddot y_i=
        \left<\text{D}w_i,
        \begin{bmatrix}
          \ddot x \\ \dddot x
        \end{bmatrix}\right>
        +
        \left<
        \begin{bmatrix}
          \dot x \\ \ddot x
        \end{bmatrix},\,
        \text{D}^2w_i
        \begin{bmatrix}
          \dot x \\ \ddot x
        \end{bmatrix}
        \right>+\mathcal{O} (|(x,\dot x)|^4),
        \label{ddotw}
      \end{equation}
  where the Hessian of $w_i$ is written as
      \begin{equation}
        \text{D}^2 w_i=
        \begin{bmatrix}
        2w_{20,i}+6w_{30,i} x + 2 w_{21,i} \dot x&
        w_{11,i}+2  w_{21,i} x+2  w_{12,i} \dot x \\
        w_{11,i}+2  w_{21,i} x+2  w_{12,i} \dot x &
        2w_{02,i}+2  w_{12,i} x+6  w_{03,i} \dot x
      \end{bmatrix}.
      \end{equation}

  We express the higher order derivatives, $\ddot x$ and $\dddot x$, in equation (\ref{ddotw}) as a function of $x$ and $\dot x$, by using equation (\ref{systemx}), restricted to the manifold,
  \begin{equation}
    \ddot x=-(\omega^2x+p_{2\mtrx{0}}x^2)+\mathcal{O} (|(x,\dot x)|^3).
    \label{ddotx}
  \end{equation}
  Differentiating equation (\ref{ddotx}) with respect to time yields,
  \begin{equation}
    \dddot x=-(\omega^2\dot x+2p_{2\mtrx{0}}x \dot x)+\mathcal{O} (|(x,\dot x)|^3).
    \label{dddotx}
  \end{equation}
  Substitution of equations (\ref{ddotx}) and (\ref{dddotx}) in (\ref{ddotw}) gives
    \begin{align}
      \begin{split}
          \ddot y_i=&(2w_{02,i}\omega^4-2w_{20,i} \omega^2)x^2-4w_{11,i} \omega^2x\dot x+(2w_{20,i}-2w_{02,i}\omega^2)\dot x^2\\&+(-2w_{20,i} p_{2\mtrx{0}}-3w_{30,i}\omega^2+4w_{02,i}\omega^2p_{2\mtrx{0}}+2\omega^4 w_{12,i})x^3\\&+(-5w_{11,i} p_{2\mtrx{0}}-7 w_{21,i}\omega^2+6 w_{03,i}\omega^4)x^2\dot x\\&+(6w_{30,i}-4w_{02,i} p_{2\mtrx{0}}-7 w_{12,i}\omega^2)x\dot x^2\\&+(2 w_{21,i}-3 w_{03,i}\omega^2)\dot x^3+\mathcal{O} (|(x,\dot x)|^4).
      \end{split}
      \label{lefthandside}
    \end{align}

  Rewriting equation (\ref{systemy}) and substituting $\gmtrx y = \mtrx w(x,\dot{x})$, we obtain the following expression for the $i^\text{th}$ element of $\ddot{\mtrx y}$, truncated at $\mathcal{O}(|(x,\dot x)|^4)$
   \begin{equation}
     \begin{split}
         \ddot y_i=&-\left(\left(M^{-1}K\right)_{ij} w_{20,j}+\left(M^{-1}\right)_{ij}q_{2\mtrx{0},j}\right)x^2-\left(M^{-1}K\right)_{ij} w_{11,j} x\dot x-\left(M^{-1}K\right)_{ij} w_{02,j} \dot x^2\\&-\left(\left(M^{-1}K\right)_{ij} w_{30,j}+\left(M^{-1}\right)_{ij}q_{1\mtrx{e}_k,j} w_{20,k}+ \left(M^{-1}\right)_{ij}q_{3\mtrx{0},j}\right)x^3\\& -\left(\left(M^{-1}K\right)_{ij}  w_{21,j}+\left(M^{-1}\right)_{ij}q_{1\mtrx{e}_k,j} w_{11,k}\right)x^2\dot x\\&-\left(\left(M^{-1}K\right)_{ij}  w_{12,j}+\left(M^{-1}\right)_{ij}q_{1\mtrx{e}_k,j}w_{02,k} \right) x \dot x^2-\left(M^{-1}K\right)_{ij}  w_{03,j} \dot x^3 +\mathcal{O} (|(x,\dot x)|^4).
       \end{split}
       \label{righthandside}
   \end{equation}

  Comparing the coefficients of each monomial term in (\ref{lefthandside}) and (\ref{righthandside}) leads to a system of algebraic equations for the second order coefficients

    \begin{align}
      &2\omega^4\delta_{ij}w_{02,i}+\left(\left(M^{-1}K\right)_{ij} -2 \omega^2\delta_{ij}\right)w_{20,j}=-\left(M^{-1}\right)_{ij}q_{2\mtrx{0},j}, \label{second1}\\
      &\left(4 \omega^2\delta_{ij}-\left(M^{-1}K\right)_{ij} \right) w_{11,j}=0, \label{second2}\\
      &2w_{20,i}+\left(\left(M^{-1}K\right)_{ij} -2\omega^2\delta_{ij}\right)w_{02,j}=0. \label{second3}
    \end{align}
From equation (\ref{second2}), we conclude that $w_{11,i}=0$, leading to the third order coefficient equations
    \begin{align}
      &2\omega^4 w_{12,i}+\left(\left(M^{-1}K\right)_{ij} -3\omega^2\delta_{ij}\right)w_{30,j}=n_{1,i},\label{third1}\\
      &6\omega^4 w_{03,i}+\left(\left(M^{-1}K\right)_{ij} -7\omega^2\delta_{ij}\right) w_{21,j}=0,\label{third2}\\
      &6w_{30,i}+\left(\left(M^{-1}K\right)_{ij} -7\omega^2\delta_{ij}\right) w_{12,j}=n_{3,i},\label{third3}\\
      &2 w_{21,i}+\left(\left(M^{-1}K\right)_{ij} -3\omega^2\delta_{ij}\right) w_{03,j}=0,\label{third4}
    \end{align}
  where
    \begin{align}
      n_{1,i}&=-\left(M^{-1}\right)_{ij} q_{3\mtrx{0},j}+2p_{2\mtrx{0}}w_{20,i}-\left(M^{-1}\right)_{ij}q_{1\mtrx{e}_k,j} w_{20,k}-4\omega^2 p_{2\mtrx{0}}w_{02,i},\\
      n_{3,i}&=4p_{2\mtrx{0}}w_{02,i}-\left(M^{-1}\right)_{ij}q_{1\mtrx{e}_k,j}w_{02,k}.
    \end{align}
  Equations (\ref{third2}) and (\ref{third4}) can be solved for $\mtrx{w}_{21}$ and $\mtrx{w}_{03}$ leading to $\mtrx{w}_{21}=\mtrx{w}_{03}=\mtrx 0$ Note that the algebraic equations (\ref{third1})-(\ref{third4}), related to the third order LSM coefficients, depend on the second order LSM coefficients. Therefore one has to solve for the quadratic LSM coefficients first, and substitute the result into the algebraic equations related to the third order. Note that the coupling between different modes of the system only occurs at third-order. Introducing the matrix notation $\mtrx{\Omega}^2_\text{p} := {\mtrx M^{-1}\mtrx K}$ and $\tilde{\mtrx{Q}}_{1\mtrx{I}}:=\left[\mtrx{q}_{1\mtrx{e}_1},\ldots,\mtrx{q}_{1\mtrx{e}_n}\right]$, yields the result stated in Lemma \ref{lsmcoeffequationslemma}.
\end{proof}

\section{Derivation of the LSM coefficients when the non-modeling modes are available \label{app:lsm_avai}}
\begin{lemma}
  \label{lsmcoefflemma}
  Under assumption (\ref{lsmnonresonance}), the unknown coefficients in (\ref{3rdapproxmodal}) satisfy

    \begin{align}
     \tilde{\mtrx{w}}_{20}=&-(\gmtrx{\Omega}^2\mtrx{D}_4)^{-1}\mtrx{D}_2 \mtrx s_{2\mtrx{0}},\\
     \tilde{\mtrx{w}}_{11}=&\mtrx 0,\\
     \tilde{\mtrx{w}}_{02}=&2(\gmtrx{\Omega}^2\mtrx{D}_4)^{-1} \mtrx s_{2\mtrx{0}},\\
      \begin{split}
     \tilde{\mtrx{w}}_{30}=& (\mtrx{D}_1\mtrx{D}_9)^{-1}\mtrx{D}_7\mtrx n_1
      +\frac{1}{6}\left[\mtrx I-(\mtrx{D}_1\mtrx{D}_9)^{-1}\mtrx{D}_7\mtrx{D}_3\right]\mtrx n_2,
      \end{split}\\
    \tilde{\mtrx{w}}_{21}=& \mtrx 0,\\
    \begin{split}
      \tilde{\mtrx{w}}_{12}=&   - (\mtrx{D}_1\mtrx{D}_9)^{-1}\left[6\mtrx n_1 -\mtrx{D}_3\mtrx n_2\right],
    \end{split}\\
    \tilde{\mtrx{w}}_{03}=&  \mtrx 0,
  \end{align}
	where
  \begin{align}
      \mtrx n_1&:=- \mtrx{s}_{3\mtrx{0}}+(2 r_{2\mtrx{0}}\mtrx{I}- \tilde{\mtrx{S}}_{1\mtrx{I}})\tilde{\mtrx{w}}_{20}-4\omega^2 r_{2\mtrx{0}}\tilde{\mtrx{w}}_{02},\\
      \mtrx n_2&:=(4r_{2\mtrx{0}}\mtrx I-\tilde{\mtrx{S}}_{1\mtrx{I}})\tilde{\mtrx{w}}_{02},\\
      \mtrx{D}_k&:= (\gmtrx \Omega ^2-k\omega^2 \mtrx I),\quad k\in\mathbb{Z}.
  \end{align}
\end{lemma}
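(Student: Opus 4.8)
The plan is to repeat, in the modal coordinates of (\ref{modalx})--(\ref{modaly}), the invariance-equation computation carried out in Appendix~\ref{app:der_lsm} for Lemma~\ref{lsmcoeffequationslemma}; the only structural differences are that the mass matrix is now the identity and the stiffness operator $\gmtrx\Omega^2$ is diagonal, which is exactly what will let me solve the coefficient equations in closed form. First I would substitute the ansatz (\ref{3rdapproxmodal}) into the invariance relation: differentiating $\gmtrx\eta=\tilde{\mtrx w}(x,\dot x)$ twice in time via the chain rule expresses $\ddot{\gmtrx\eta}$ through $\mathrm D\tilde{\mtrx w}$ and $\mathrm D^2\tilde{\mtrx w}$ contracted with $(\dot x,\ddot x)$ and $(\ddot x,\dddot x)$, after which $\ddot x$ and $\dddot x$ are eliminated using the restricted $x$-equation $\ddot x=-(\omega^2 x+r_{2\mtrx 0}x^2)+\mathcal O(|(x,\dot x)|^3)$ together with its time derivative. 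Plugging the same ansatz into (\ref{modaly}) gives a second expression for $\ddot{\gmtrx\eta}$ in terms of $x$ and $\dot x$ alone. Equating the two and collecting the monomials $x^2,x\dot x,\dot x^2$ at quadratic order and $x^3,x^2\dot x,x\dot x^2,\dot x^3$ at cubic order produces a triangular cascade of linear systems for the $\tilde{\mtrx w}_{ij}$ that is the exact analogue of (\ref{second1})--(\ref{third4}) with $\mtrx M^{-1}\mtrx K$ replaced by $\gmtrx\Omega^2$ and $\mtrx M^{-1}\mtrx q$ by $\mtrx s$.

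Next I would solve these systems explicitly. The $x\dot x$-equation reads $(4\omega^2\mtrx I-\gmtrx\Omega^2)\tilde{\mtrx w}_{11}=\mtrx 0$, and since $\omega_i/\omega\notin\mathbb Z$ excludes $\omega_i=2\omega$ the matrix is invertible, so $\tilde{\mtrx w}_{11}=\mtrx 0$. The $x^2$- and $\dot x^2$-equations form a $2\times2$ block whose coefficient determinant is $(\gmtrx\Omega^2-2\omega^2\mtrx I)^2-4\omega^4\mtrx I=\gmtrx\Omega^2\mtrx D_4$, invertible by the same non-resonance condition; Cramer's rule then gives $\tilde{\mtrx w}_{02}=2(\gmtrx\Omega^2\mtrx D_4)^{-1}\mtrx s_{2\mtrx 0}$ and $\tilde{\mtrx w}_{20}=-(\gmtrx\Omega^2\mtrx D_4)^{-1}\mtrx D_2\mtrx s_{2\mtrx 0}$. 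At cubic order, the $x^2\dot x$- and $\dot x^3$-equations form a homogeneous $2\times2$ block with determinant $(\gmtrx\Omega^2-7\omega^2\mtrx I)(\gmtrx\Omega^2-3\omega^2\mtrx I)-12\omega^4\mtrx I=\mtrx D_1\mtrx D_9$, invertible because $\omega_i\neq\omega$ and $\omega_i\neq3\omega$, forcing $\tilde{\mtrx w}_{21}=\tilde{\mtrx w}_{03}=\mtrx 0$. The remaining $x^3$- and $x\dot x^2$-equations form an inhomogeneous $2\times2$ block with the same determinant $\mtrx D_1\mtrx D_9$ and right-hand sides $\mtrx n_1$, $\mtrx n_2$ — which are precisely the modal specializations of $n_{1,i}$ and $n_{3,i}$ once $\tilde{\mtrx w}_{11}=\mtrx 0$ has been used — and Cramer's rule (all factors being diagonal and hence commuting) yields $\tilde{\mtrx w}_{12}=-(\mtrx D_1\mtrx D_9)^{-1}(6\mtrx n_1-\mtrx D_3\mtrx n_2)$ and $\tilde{\mtrx w}_{30}=(\mtrx D_1\mtrx D_9)^{-1}\mtrx D_7\mtrx n_1-2\omega^4(\mtrx D_1\mtrx D_9)^{-1}\mtrx n_2$.

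Finally I would bring $\tilde{\mtrx w}_{30}$ into the form stated in the lemma using the identity $\mtrx D_7\mtrx D_3=\mtrx D_1\mtrx D_9+12\omega^4\mtrx I$, which converts $-2\omega^4(\mtrx D_1\mtrx D_9)^{-1}$ into $\tfrac16[\mtrx I-(\mtrx D_1\mtrx D_9)^{-1}\mtrx D_7\mtrx D_3]$, and using commutativity of the diagonal factors so that the ordering in $(\gmtrx\Omega^2\mtrx D_4)^{-1}\mtrx D_2$ and the like is immaterial. I do not expect a genuine obstacle: the argument is structurally identical to Appendix~\ref{app:der_lsm}, and the actual work is the bookkeeping of monomial coefficients plus the elementary $2\times2$ eliminations, all of which close because (\ref{lsmnonresonance}) excludes the zeros of $\gmtrx\Omega^2$, $\mtrx D_1$, $\mtrx D_4$ and $\mtrx D_9$. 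The one point deserving care is verifying that the cubic-order right-hand sides $\mtrx n_1$, $\mtrx n_2$ involve only the already-determined quadratic coefficients $\tilde{\mtrx w}_{20}$, $\tilde{\mtrx w}_{02}$, so that the cascade is genuinely triangular and each stage uniquely solvable.
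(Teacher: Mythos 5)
Your proposal is correct and follows essentially the same route as the paper: the paper's proof simply transforms the coefficient equations (\ref{second1})--(\ref{third4}) of Lemma~\ref{lsmcoeffequationslemma} into modal coordinates, where $\mtrx{M}^{-1}\mtrx{K}$ becomes the diagonal $\gmtrx{\Omega}^2$ and the system decouples mode by mode into exactly the triangular cascade of $2\times2$ blocks you describe, with determinants $\gmtrx{\Omega}^2\mtrx{D}_4$ and $\mtrx{D}_1\mtrx{D}_9$ and with your identity $\mtrx{D}_7\mtrx{D}_3=\mtrx{D}_1\mtrx{D}_9+12\omega^4\mtrx{I}$ accounting for the stated form of $\tilde{\mtrx{w}}_{30}$. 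Your verification that $\mtrx{n}_1,\mtrx{n}_2$ depend only on the already-computed quadratic coefficients matches the paper's observation that the cubic-order equations must be solved after the quadratic ones.
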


\begin{proof}
  This result directly follows from transforming equations (\ref{second1}-\ref{third4}) in modal coordinates. This step makes $\mtrx M^{-1} \mtrx K$ diagonal and decouples the equations from each other for every mode:

  \begin{align}
    &2\omega^4\tilde w_{02,i}+\left(\omega_i^2 -2 \omega^2\right)\tilde w_{20,i}=-s_{20,i}, \\
    &\left(4 \omega^2-\omega_i^2 \right) \tilde w_{11,i}=0, \\
    &2\tilde w_{20,i}+\left(\omega_i^2 -2\omega^2\right)\tilde w_{02,i}=0,\\
    &2\omega^4 \tilde w_{12,i}+\left(\omega_i^2 -3\omega^2\right)\tilde w_{30,i}=n_{1,i},\\
    &6\omega^4 \tilde w_{03,i}+\left(\omega_i^2 -7\omega^2\right) \tilde w_{21,i}=0,\\
    &6\tilde w_{30,i}+\left(\omega_i^2 -7\omega^2\right) \tilde w_{12,i}=n_{3,i},\\
    &2 \tilde w_{21,i}+\left(\omega_i^2 -3\omega^2\right) \tilde w_{03,i}=0,
  \end{align}
where
  \begin{align}
    n_{1,i}&=- s_{30,i}+2r_{20}\tilde w_{20,i}-s_{11,ij} \tilde w_{20,j}-4\omega^2 r_{20}\tilde w_{02,i},\\
    n_{3,i}&=4r_{20}\tilde w_{02,i}-s_{11,ij}\tilde w_{02,j}.
  \end{align}
The solution of this system gives the statement of Lemma \ref{lsmcoefflemma}.
\end{proof}

The LSM theory does not guarantee the existence of a reduced-order model in case of resonance. The LSM coefficients will blow up exactly when the matrices $\gmtrx{\Omega}^2\mtrx{D}_4$ and $\mtrx{D}_1\mtrx{D}_9$ become singular. These matrices are diagonal, where their $i^\text{th}$ diagonal element can be written as
  \begin{align}
    &\left(\Omega^2 D_4\right)_{ii}=\omega_{i}^{2}\left(\omega_{i}-2 \omega \right)\left(\omega_{i}+2\omega \right),\\
    & \left(D_1 D_9\right)_{ii}=\left(\omega_{i}-\omega\right) \left(\omega_{i}+\omega\right) \left( \omega_{i}-3\omega\right)\left(\omega_{i}+3\omega\right).
  \end{align}
As can be seen, 1:1, 1:2 and 1:3 lower-order resonances are explicitly causing a blow-up in the expressions. Higher-order resonances will have the same effect in higher-order approximation. Even though the lower-order approximations are formally computable, there seems to be no awareness of this in existing literature.

\section{Proof of Theorem \ref{thm_ham}\label{app:ham}}
For convenience, we restate our two-dimensional first-order system (\ref{firstorder})
\begin{equation}
\begin{split}
\dot{x}=&y, \\
\dot{y}=&-\omega^2 x -\alpha x^2 -\beta x^3 - \gamma xy^2. \label{firstorder_app}
\end{split}
\end{equation}
Assuming that system (\ref{firstorder_app}) is Hamiltonian, then it must be of the general form
  \begin{equation}
        \begin{bmatrix}
        \dot{x} \\
        \dot{y}
        \end{bmatrix} = a(x,y)JDH(x,y)=
        \begin{bmatrix}
         a(x,y) \partial_y H\\
        -a(x,y) \partial_x H
        \end{bmatrix}. \label{genhamil}
  \end{equation}
  For simplicity we assume that the scalar function $a(x,y)$ only depends on $x$. Equating equations (\ref{firstorder_app}) and (\ref{genhamil}), we obtain
  \begin{align}
        \partial_y H(x,y) &= \frac{y}{a(x)}, \label{ham1}\\
        \partial_x H(x,y) &= \frac{\omega^2 x +\alpha x^2 + \beta x^3 + \gamma xy^2}{a(x)}. \label{ham2}
  \end{align}
  Integrating equation (\ref{ham1}) over $y$ yields
  \begin{equation}
  H(x,y) = \frac{y^2}{2a(x)}+F(x), \label{hamyinty}
  \end{equation}
  where $F(x)$ is a scalar function, arising from the integration, that can solely depend on $x$. Differentiating equation (\ref{hamyinty}) with respect to $x$ and equating the result with equation (\ref{ham2}), gives
  \begin{align}
  -\frac{y^2}{2a(x)^2}\partial_x a(x)+\partial_x F(x)
  = \frac{\gamma xy^2}{a(x)} + \frac{\omega^2 x +\alpha x^2 + \beta x^3}{a(x)}.
  \end{align}
  We observe that $\partial_x F(x)$ only depends on $x$, therefore we must have that
  \begin{align}
  \partial_x a(x) = -2\gamma x a(x), \quad \partial_x F(x) = \frac{\omega^2 x +\alpha x^2 + \beta x^3}{a(x)},
  \end{align}
  having the solutions
  \begin{align}
  a(x) &= e^{-\gamma x^2}, \\
  F(x) &= \frac{\omega^2}{2\gamma}e^{\gamma x^2} + \int e^{\gamma x^2}\left(\alpha x^2 + \beta x^3\right)dx.
  \end{align}
  Substituting the solutions for $a(x)$ and $F(x)$ into equation (\ref{hamyinty}), proves Theorem \ref{thm_ham}. \qed

\section{Proof of Theorem \ref{mdtheorem}\label{app:md}}
For algebraic equivalence of the MDs and the LSM reduced models, we must have that $\tilde{\mtrx{w}}_{02} =\mtrx 0$ because the MDs reduction misses the $\dot{x}^2$ term. This condition leads to the equation
    \begin{equation}
	(\gmtrx{\Omega}^2\mtrx{D}_4)^{-1} \mtrx s_{2\mtrx{0}} = \mtrx 0,
    \end{equation}
which is satisfied only in the case of $\mtrx s_{2\mtrx{0}}=\mtrx 0$, which in turn leads to flat manifolds from both the MDs and the LSM reductions.

The main statement of the theorem follows from the following limits for $\mtrx s_{2\mtrx{0}}\neq \mtrx 0$, written in coordinates:

    \begin{align}
      \begin{split}
        \lim_{\Omega_i \rightarrow \infty} \frac{\alpha_i}{\Theta_i}&= \lim_{\Omega_i \rightarrow\infty} 		\frac{\Omega_i^2-2\omega^2}{\Omega_i^2-4\omega^2}=1,\label{alphatheta}\\
        \lim_{\Omega_i \rightarrow \infty} \frac{\gamma_i}{\alpha_i}&=0.
      \end{split}
    \end{align}
Observe that in the resonant limit cases, the error of the MDs reduction is unbounded
  \begin{equation}
    \lim_{\Omega_i\rightarrow 2\omega} \frac{\alpha_i}{\Theta_i}=\infty,
  \end{equation}
as the MDs reduction method does not take the possible resonances into consideration. This concludes the proof of Theorem \ref{mdtheorem}. \qed

\section{Construction of the time-periodic SSM reduced model in the general case \label{ssm_construct}}

\subsection{Setup}
We now consider $(n+\nu)$-degree-of-freedom, periodically forced, damped nonlinear mechanical systems with a linearly independent partition of the variables so that $\mtrx x\in \mathbb{R}^\nu$ (modeling variables) and  $\mtrx y\in \mathbb{R}^n$ (non-modeling variables):
    \begin{gather}
        \mtrx M_1 \mtrx {\ddot x} + \mtrx C_1 \mtrx {\dot x}+ \mtrx K_1 \mtrx x +\mtrx P(\mtrx x,\mtrx {\dot x}, \mtrx y, \mtrx{\dot y})=\varepsilon \mtrx{F}_1\sin(\phi), \label{xeq_app}\\
        \mtrx M_2 \mtrx{\ddot y} + \mtrx C_2 \mtrx{\dot y}+ \mtrx K_2 \mtrx y +\mtrx Q(\mtrx x,\mtrx{\dot x}, \mtrx y, \mtrx{\dot y})=\varepsilon \mtrx{F}_2 \sin(\phi), \label{yeq_app}  \\
0\leq\varepsilon\ll 1,\quad \phi = \Omega t, \nonumber
    \end{gather}
where the mono-harmonic  external forcing does not depend on positions and velocities.

We assume Rayleigh-damping, which implies that the modal matrices simultaneously diagonalize the mass, the stiffness and the damping matrices. We denote the solutions of the eigenproblem $\det(\lambda^2 \mtrx M_1+\lambda\mtrx C_1+\mtrx K_1)=0$ by $\lambda_{i}^{x}$, $i\in 1,\dots,2\nu$. The eigenvalues are ordered by their real parts so that $\text{Re} \lambda_{2\nu}^x \leq \dots \leq \text{Re} \lambda_1^x< 0$.  The solutions of the eigenproblem $\det(\lambda^2 \mtrx M_2+\lambda\mtrx C_2+\mtrx K_2)=0$ are given by $\lambda_{i}^{y}$, $i\in 1,\dots,2n$. Again, we order the eigenvalues by their real parts so that $\text{Re} \lambda_{2n}^y \leq \dots \leq \text{Re} \lambda_1^y< 0$. The modeling subspace is declared to be a slow spectral subspace, i.e., the subspace spanned by the $\nu$ slowest decaying modes. This, in turn, implies that $\text{Re} \lambda_{2n}^y \leq \dots \leq \text{Re} \lambda_1^y<\text{Re} \lambda_{2\nu}^x \leq \dots \leq \text{Re} \lambda_1^x< 0$.

 \subsection{Existence of the reduced model}
In the current setting, we seek a time-periodic slow SSM. We assume that the low-order non-resonance conditions
  \begin{equation}
   \text{Re}\lambda_{j}^y\neq \sum_{i=1}^\nu \nu_i \text{Re}\lambda_{i}^x,
    \label{ssmnonresonance2}
  \end{equation}
  hold for every $j$ and nonnegative set of integers $\nu_i$ such that $2 \leq \sum_{k=1}^\nu \nu_k \leq \Sigma$. Then a $2\nu$-dimensional, analytic, invariant and attracting time-periodic SSM exists that is unique in the differentiability class $C^{\Sigma+1}$, and higher, up to analytic.

\subsection{Construction of the time-periodic SSM reduced model}
We express the damped mechanical system (\ref{xeq_app})-(\ref{yeq_app}) in modal coordinates by substituting $\mtrx x=\gmtrx \Phi_1 \gmtrx{\xi}$, $\mtrx y=\gmtrx \Phi_2 \gmtrx \eta$. Subsequently, we project the equations of motion onto the modal directions by left multiplying equations (\ref{xeq_app}) and (\ref{yeq_app}) with the transposed modal matrices $\gmtrx\Phi_1^\top$ and $\gmtrx\Phi_2^\top$:
  \begin{align}
     & \gmtrx{\ddot\xi} + \mtrx{\hat C}_1\gmtrx{\dot\xi}+\mtrx{\hat K}_1\gmtrx{\xi} + \mtrx{R} \left(\gmtrx{\xi},\gmtrx{\dot\xi}, \gmtrx \eta, \gmtrx{\dot \eta} \right)=\varepsilon \hat{\mtrx{F}}_1\sin(\phi)\text{, }\label{xieq}\\
    &\gmtrx{\ddot \eta} + \mtrx{\hat C}_2\gmtrx{\dot\eta}+\mtrx{\hat K}_2\gmtrx{\eta} + \mtrx{S} \left(\gmtrx{\xi},\gmtrx{\dot\xi}, \gmtrx \eta, \gmtrx{\dot \eta} \right)=\varepsilon \hat{\mtrx{F}}_2\sin(\phi)\text{, } \label{etaeq}
  \end{align}
Using the results from Haller and Ponsioen \cite{nnmssm}, we make use the fact that the SSM perturbs smoothly from the modeling subspace of the linear unperturbed part of system (\ref{xieq})-(\ref{etaeq}) under the addition of $\mathcal{O}(\varepsilon)$ terms in system (\ref{xieq})-(\ref{etaeq}). We restrict ourselves to the setting of Breunung and Haller \cite{thomas} and expand each non-modeling variables $\eta_i$ in $\varepsilon$. Subsequently, for different orders of $\varepsilon$, every non-modeling variable $\eta_i$ is expanded in $\gmtrx{\xi}$ and $\dot{\gmtrx{\xi}}$. After a truncating at $\mathcal{O}(|(\gmtrx{\xi},\gmtrx{\dot\xi})|^3,\varepsilon|(\gmtrx{\xi},\gmtrx{\dot\xi})|,\varepsilon^2)$, which is justified if $\gmtrx{\xi}$ and $\dot{\gmtrx{\xi}}$ are of $\mathcal{O}(\varepsilon^{\frac{1}{4}})$,  we obtain
    \begin{equation}
        \eta_i=
        \left<
        \begin{bmatrix}
          \gmtrx{\xi} \\ \gmtrx{\dot\xi}
        \end{bmatrix},
        \begin{bmatrix}
          \mtrx W_{11,i}& \mtrx W_{12,i} \\
          \mtrx W_{12,i}^\top& \mtrx W_{22,i}
        \end{bmatrix}
        \begin{bmatrix}
          \gmtrx{\xi}\\ \gmtrx{\dot\xi}
        \end{bmatrix}
        \right>
        +
        \varepsilon \bar{W}_i(\phi)
        +\mathcal{O}(|(\gmtrx{\xi},\gmtrx{\dot\xi})|^3,\varepsilon|(\gmtrx{\xi},\gmtrx{\dot\xi})|,\varepsilon^2).
        \label{SSM}
    \end{equation}
Here, $\mtrx W_{11,i}, \mtrx W_{12,i}$ and $\mtrx W_{22,i}$ are $m\times m$ matrices. We specifically aim for a third-order reduced model, which implies that the manifold only has to be approximated up to second order. We uniquely define the quadratic form in equation (\ref{SSM}), by requiring that $\mtrx W_{11,i}=\mtrx W_{11,i}^\top$ and $\mtrx W_{22,i}=\mtrx W_{22,i}^\top$.


\begin{lemma}\label{lemmassm}
  The coefficient matrices $\mtrx W_{11,i}$, $\mtrx W_{12,i}$ and $\mtrx W_{22,i}$ of the second-order SSM approximation (\ref{SSM}) solve the linear matrix equations
\begin{align}
  \begin{split}
  \mtrx S_{11,i}&= \rm{Sym}\left[\left(4\zeta_i \omega_i\mtrx{\hat K}_1 - 2\mtrx{\hat K}_1\mtrx{\hat C}_1 \right) \mtrx W_{12,i}^\top \right.\\
   & \left. + \left(2\mtrx{\hat K}_1-\omega_i^2 \mtrx I \right) \mtrx W_{11,i} -  2\mtrx{\hat K}_1 \mtrx W_{22,i} \mtrx{\hat K}_1\right],
   \end{split}\label{ssmeq1}\\
     \begin{split}
      \mtrx S_{12,i}&=-\left(\mtrx{\hat K}_1 \mtrx{\hat C}_1 - 2\zeta_i \omega_i \mtrx{\hat K}_1\right)  \mtrx W_{22,i} -(\omega_i^2\mtrx I-\mtrx{\hat K}_1) \mtrx W_{12,i}\\
      & +2\mtrx{\hat K}_1 \mtrx W_{12,i}^\top - 2\mtrx{\hat K}_1 \mtrx W_{22,i} \mtrx{\hat C}_1 -\mtrx W_{12,i}\left(\mtrx{\hat C}_1^2-\mtrx{\hat K}_1\right)\\
      & + \mtrx W_{11,i}\mtrx{\hat C}_1 + 2\zeta_i \omega_i \mtrx W_{12,i} \mtrx{\hat C}_1 - 2\zeta_i \omega_i \mtrx W_{11,i},
   \end{split}\\
  \begin{split}
  \mtrx S_{22,i}&=\rm{Sym}\left[\left(2\mtrx{\hat C}_1 -4\zeta_i \omega_i \mtrx I\right) \mtrx W_{12,i}\right.\\&+\left(4\zeta_i \omega_i \mtrx{\hat C}_1-\omega_i^2 \mtrx I-2\left(\mtrx{\hat C}_1^2-\mtrx{\hat K}_1\right)\right) \mtrx W_{22,i}\\
  & \left.+4\mtrx{\hat C}_1 \mtrx W_{12,i}^\top -2\mtrx{\hat C}_1 \mtrx W_{22,i} \mtrx{\hat C}_1-2\mtrx W_{11,i}\right]. \label{ssmeq3}
   \end{split}
\end{align}
where  $\mtrx S_{11,i}$, $\mtrx S_{12,i}$ and $\mtrx S_{22,i}$ are the $m\times m$ matrices filled with the coefficients of the $ \gmtrx{\xi},\gmtrx{\dot\xi}$ dependent terms from the quadratic part of $S_{i}( \gmtrx{\xi},\gmtrx{\dot\xi}, \gmtrx\eta, \dot {\gmtrx\eta})$.
\end{lemma}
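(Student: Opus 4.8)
The strategy is to enforce invariance of the SSM, mirroring the conservative computation of Appendix~\ref{app:der_lsm} but now carrying the damping matrices and the $\varepsilon$-dependence. Introduce the shorthand $\mtrx z := [\gmtrx{\xi};\gmtrx{\dot\xi}]$ and write the second-order graph ansatz (\ref{SSM}) as $\eta_i = \mtrx z^\top \mtrx W_i \mtrx z + \varepsilon\bar W_i(\phi) + \mathcal{O}(|\mtrx z|^3,\varepsilon|\mtrx z|,\varepsilon^2)$, where $\mtrx W_i$ is the symmetric $2m\times 2m$ block matrix with blocks $\mtrx W_{11,i}$, $\mtrx W_{12,i}$, $\mtrx W_{12,i}^\top$, $\mtrx W_{22,i}$. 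Differentiating $\eta_i$ once and twice in time by the chain rule gives $\dot\eta_i = 2\mtrx z^\top \mtrx W_i \dot{\mtrx z} + \mathcal{O}(\varepsilon)$ and $\ddot\eta_i = 2\dot{\mtrx z}^\top \mtrx W_i \dot{\mtrx z} + 2\mtrx z^\top \mtrx W_i \ddot{\mtrx z} + \mathcal{O}(\varepsilon)$, with $\dot{\mtrx z} = [\gmtrx{\dot\xi};\gmtrx{\ddot\xi}]$ and $\ddot{\mtrx z} = [\gmtrx{\ddot\xi};\gmtrx{\dddot\xi}]$; the time-dependence of $\eta_i$ through $\phi$ only contributes at $\mathcal{O}(\varepsilon)$.

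Next I would insert the leading-order reduced dynamics on the SSM, namely $\gmtrx{\ddot\xi} = -\mtrx{\hat K}_1 \gmtrx{\xi} - \mtrx{\hat C}_1 \gmtrx{\dot\xi} + \mathcal{O}(|\mtrx z|^2,\varepsilon)$, obtained from (\ref{xieq}) with $\gmtrx\eta$ enslaved: since $\gmtrx\eta = \mathcal{O}(|\mtrx z|^2,\varepsilon)$ on the manifold, the nonlinearity $\mtrx R$ evaluated there is $\mathcal{O}(|\mtrx z|^2)$ and the forcing is $\mathcal{O}(\varepsilon)$. Differentiating once more gives $\gmtrx{\dddot\xi} = -\mtrx{\hat K}_1\gmtrx{\dot\xi} + \mtrx{\hat C}_1\mtrx{\hat K}_1\gmtrx\xi + \mtrx{\hat C}_1^2\gmtrx{\dot\xi} + \mathcal{O}(|\mtrx z|^2,\varepsilon)$, so that $\dot{\mtrx z} = \mtrx A\mtrx z + \mathcal{O}(|\mtrx z|^2,\varepsilon)$ and $\ddot{\mtrx z} = \mtrx A^2\mtrx z + \mathcal{O}(|\mtrx z|^2,\varepsilon)$ with $\mtrx A$ the companion matrix of the linear modeling block. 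Because only a second-order approximation of the manifold is needed, the $\mathcal{O}(|\mtrx z|^2)$ corrections to $\gmtrx{\ddot\xi}$ feed into $\ddot\eta_i$ only at cubic order and drop out, and the $\varepsilon\bar W_i(\phi)$ term decouples at $\mathcal{O}(\varepsilon)$; hence the quadratic-in-$\mtrx z$ balance is untouched by both $\mtrx R$ and the forcing. On the other hand, the $i$-th component of (\ref{etaeq}), with $\mtrx{\hat K}_2$ and $\mtrx{\hat C}_2$ diagonal with entries $\omega_i^2$ and $2\zeta_i\omega_i$, reads $\ddot\eta_i = -\omega_i^2\eta_i - 2\zeta_i\omega_i\dot\eta_i - S_i + \mathcal{O}(\varepsilon)$, and on the SSM only the $(\gmtrx\xi,\gmtrx{\dot\xi})$-bilinear part of the quadratic part of $S_i$ survives at this order, since any $\gmtrx\eta$ or $\gmtrx{\dot\eta}$ factor is itself $\mathcal{O}(|\mtrx z|^2)$.

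Equating the two expressions for $\ddot\eta_i$ and collecting the quadratic forms in $\mtrx z$ yields a single matrix identity of the form $2\mtrx A^\top\mtrx W_i\mtrx A + \mathrm{Sym}[\,2\mtrx W_i\mtrx A^2 + 4\zeta_i\omega_i\mtrx W_i\mtrx A\,] + \omega_i^2\mtrx W_i + \mtrx S_i = \mtrx 0$, where $\mtrx S_i$ carries the quadratic coefficients of $S_i$ in the block representation $\gmtrx\xi^\top\mtrx S_{11,i}\gmtrx\xi + 2\gmtrx\xi^\top\mtrx S_{12,i}\gmtrx{\dot\xi} + \gmtrx{\dot\xi}^\top\mtrx S_{22,i}\gmtrx{\dot\xi}$ (with $\mtrx S_{11,i},\mtrx S_{22,i}$ symmetric and $\mtrx S_{12,i}$ the full cross block). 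Writing $\mtrx A$ and $\mtrx A^2$ in $2\times 2$ block form in terms of $\mtrx{\hat K}_1$ and $\mtrx{\hat C}_1$ and reading off the $(\gmtrx\xi,\gmtrx\xi)$, $(\gmtrx\xi,\gmtrx{\dot\xi})$ and $(\gmtrx{\dot\xi},\gmtrx{\dot\xi})$ blocks reproduces precisely (\ref{ssmeq1})--(\ref{ssmeq3}); the $\mathrm{Sym}[\cdot]$ appears only on the diagonal blocks because $\mtrx W_{11,i}$ and $\mtrx W_{22,i}$ were declared symmetric, whereas the off-diagonal block $\mtrx W_{12,i}$ is unconstrained, so its equation carries no symmetrization. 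Equivalently, one may bypass matrix notation altogether and match coefficients of $\xi_j\xi_k$, $\xi_j\dot\xi_k$, $\dot\xi_j\dot\xi_k$ monomial-by-monomial exactly as in Appendix~\ref{app:der_lsm}.

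The main obstacle is bookkeeping rather than any conceptual difficulty: one must be scrupulous about which monomials survive the truncation at $\mathcal{O}(|\mtrx z|^3,\varepsilon|\mtrx z|,\varepsilon^2)$ — confirming in particular that the cubic terms of $\mtrx R$, the quadratic correction to $\gmtrx{\ddot\xi}$, the $\gmtrx\eta,\gmtrx{\dot\eta}$-containing parts of $S_i$, and all $\mathcal{O}(\varepsilon)$ contributions genuinely drop out of the quadratic balance — and about the exact placement of $\mtrx{\hat C}_1$ and $\mtrx{\hat C}_2$ in the block expansion of $\mtrx A^\top\mtrx W_i\mtrx A$ and $\mtrx W_i\mtrx A^2$, which is where the non-symmetric products $\mtrx{\hat K}_1\mtrx{\hat C}_1$ and $\mtrx{\hat C}_1^2-\mtrx{\hat K}_1$ enter. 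Once these are tracked carefully, (\ref{ssmeq1})--(\ref{ssmeq3}) follow by linear algebra.
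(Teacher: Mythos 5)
Your proposal is correct and follows essentially the same route as the paper: a direct invariance computation in which the quadratic ansatz is differentiated twice, the linearized modeling dynamics are substituted for $\gmtrx{\ddot\xi}$ and $\gmtrx{\dddot\xi}$, and the resulting quadratic form is matched (via symmetric parts) against the one obtained from the $\gmtrx\eta$-equation, with only the $(\gmtrx\xi,\gmtrx{\dot\xi})$-bilinear part of $S_i$ surviving the truncation. Your companion-matrix packaging $2\mtrx A^\top\mtrx W_i\mtrx A+\mathrm{Sym}[2\mtrx W_i\mtrx A^2+4\zeta_i\omega_i\mtrx W_i\mtrx A]+\omega_i^2\mtrx W_i+\mtrx S_i=\mtrx 0$ is just a more compact statement of the paper's $\mathrm{Sym}\,\mtrx B_i=\mathrm{Sym}\,\mtrx C_i$ and, when expanded blockwise, reproduces (\ref{ssmeq1})--(\ref{ssmeq3}).
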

\begin{lemma}\label{lemmassm_periodic}
The time-dependent coefficients $\bar{W}_i(\phi)$, of the second order SSM approximation (\ref{SSM}), are the periodic solutions of the second-order non-homogeneous ordinary differential equations
\begin{equation}
\Omega^2\bar{W}_i^{\prime\prime}+2\zeta_i\omega_i\Omega \bar{W}_i^\prime + \omega_i^2 \bar{W}_i = \hat{F}_{2,i}\sin(\phi),
\end{equation}
and can be expressed as
\begin{align}
\bar{W}_i(\phi) &= \frac{\omega_i^2-\Omega^2}{(\omega_i^2-\Omega^2)^2+(2\zeta_i\omega_i\Omega)^2}\hat{F}_{2,i}\sin(\phi) \\
&+ \left(\frac{(\omega_i^2-\Omega^2)^2}{2\zeta_i\omega_i\Omega((\omega_i^2-\Omega^2)^2+(2\zeta_i\omega_i\Omega)^2)}-\frac{1}{2\zeta_i\omega_i\Omega}\right)\hat{F}_{2,i}\cos(\phi).\nonumber
\end{align}
\end{lemma}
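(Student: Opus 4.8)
The plan is to substitute the time-periodic SSM ansatz (\ref{SSM}) into the $i^{\text{th}}$ component of the non-modeling equation (\ref{etaeq}), written in modal coordinates, and to isolate the contributions of order $\varepsilon$ that are independent of $(\gmtrx{\xi},\gmtrx{\dot\xi})$. Writing $\eta_i = Q_i(\gmtrx{\xi},\gmtrx{\dot\xi}) + \varepsilon\bar{W}_i(\phi) + \mathcal{O}(|(\gmtrx{\xi},\gmtrx{\dot\xi})|^3,\varepsilon|(\gmtrx{\xi},\gmtrx{\dot\xi})|,\varepsilon^2)$, where $Q_i$ is the quadratic form carrying the coefficient matrices $\mtrx{W}_{11,i},\mtrx{W}_{12,i},\mtrx{W}_{22,i}$ of Lemma \ref{lemmassm}, and differentiating $\bar{W}_i(\phi)$ with respect to physical time through $\phi=\Omega t$ (so that $\tfrac{d}{dt}\bar{W}_i(\phi)=\Omega\bar{W}_i'(\phi)$ and $\tfrac{d^2}{dt^2}\bar{W}_i(\phi)=\Omega^2\bar{W}_i''(\phi)$), the linear part $\ddot\eta_i+2\zeta_i\omega_i\dot\eta_i+\omega_i^2\eta_i$ of (\ref{etaeq}) contributes $\varepsilon\bigl(\Omega^2\bar{W}_i''+2\zeta_i\omega_i\Omega\bar{W}_i'+\omega_i^2\bar{W}_i\bigr)$ at the relevant order, while the right-hand side contributes $\varepsilon\hat{F}_{2,i}\sin(\phi)$.

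Next I would verify that the nonlinearity $\mtrx{S}(\gmtrx{\xi},\gmtrx{\dot\xi},\gmtrx{\eta},\gmtrx{\dot\eta})$ makes no contribution to this particular balance. Since $\mtrx{S}$ is at least quadratic in its arguments and $\gmtrx{\xi},\gmtrx{\dot\xi}=\mathcal{O}(\varepsilon^{1/4})$ on the manifold, the purely $(\gmtrx{\xi},\gmtrx{\dot\xi})$-dependent part of $\mtrx{S}$ is $\mathcal{O}(\varepsilon^{1/2})$ --- and is exactly the part handled in Lemma \ref{lemmassm} --- any cross term involving the $\varepsilon\bar{W}_i$ piece is $\mathcal{O}(\varepsilon|(\gmtrx{\xi},\gmtrx{\dot\xi})|)=\mathcal{O}(\varepsilon^{5/4})$, and the $\bar{W}_i$-quadratic part is $\mathcal{O}(\varepsilon^2)$; all of these are absorbed by the truncation $\mathcal{O}(|(\gmtrx{\xi},\gmtrx{\dot\xi})|^3,\varepsilon|(\gmtrx{\xi},\gmtrx{\dot\xi})|,\varepsilon^2)$. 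Dividing the surviving terms by $\varepsilon$ then gives $\Omega^2\bar{W}_i''+2\zeta_i\omega_i\Omega\bar{W}_i'+\omega_i^2\bar{W}_i=\hat{F}_{2,i}\sin(\phi)$, which is the first assertion of the lemma.

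Finally I would establish existence, uniqueness, and the closed form of the periodic response. The characteristic exponents of the homogeneous equation are $\bigl(-\zeta_i\omega_i\pm\omega_i\sqrt{\zeta_i^2-1}\bigr)/\Omega$, which have negative real part because $\zeta_i,\omega_i>0$; hence the homogeneous equation has no nontrivial bounded solution, so the forced equation admits a unique $2\pi$-periodic solution, and it is the one selected by invariance. Inserting the ansatz $\bar{W}_i(\phi)=A\sin(\phi)+B\cos(\phi)$ and matching the $\sin(\phi)$ and $\cos(\phi)$ coefficients yields the $2\times2$ system $(\omega_i^2-\Omega^2)A-2\zeta_i\omega_i\Omega B=\hat{F}_{2,i}$ and $2\zeta_i\omega_i\Omega A+(\omega_i^2-\Omega^2)B=0$, whose solution is $A=(\omega_i^2-\Omega^2)\hat{F}_{2,i}/\Delta_i$ and $B=-2\zeta_i\omega_i\Omega\hat{F}_{2,i}/\Delta_i$ with $\Delta_i=(\omega_i^2-\Omega^2)^2+(2\zeta_i\omega_i\Omega)^2$; rewriting $B$ as $\bigl(\tfrac{(\omega_i^2-\Omega^2)^2}{2\zeta_i\omega_i\Omega \Delta_i}-\tfrac{1}{2\zeta_i\omega_i\Omega}\bigr)\hat{F}_{2,i}$ reproduces the stated expression for $\bar{W}_i(\phi)$.

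The main obstacle I anticipate is the order bookkeeping in the second step: cleanly separating the $\varepsilon$-order, $(\gmtrx{\xi},\gmtrx{\dot\xi})$-independent terms from the autonomous quadratic-form balance of Lemma \ref{lemmassm}, and checking that every discarded cross term truly lies beyond the truncation order under the scaling $\gmtrx{\xi},\gmtrx{\dot\xi}=\mathcal{O}(\varepsilon^{1/4})$. Once the governing linear oscillator equation is in hand, solving it for its periodic response is routine.
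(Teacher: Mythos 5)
Your proposal is correct and follows essentially the same route as the paper: the paper also derives the equation $\Omega^2\bar{W}_i^{\prime\prime}+2\zeta_i\omega_i\Omega \bar{W}_i^\prime + \omega_i^2 \bar{W}_i = \hat{F}_{2,i}\sin(\phi)$ by collecting the $\mathcal{O}(\varepsilon|(\gmtrx{\xi},\gmtrx{\dot\xi})|^0)$ terms of the invariance relation (within a single computation that also proves Lemma \ref{lemmassm}) and then solves it by undetermined coefficients. Your extra remarks on the order bookkeeping under the $\mathcal{O}(\varepsilon^{1/4})$ scaling and on uniqueness of the periodic response via the decaying characteristic exponents are consistent with, and slightly more explicit than, the paper's argument.
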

\begin{proof}
We prove Lemmas \ref{lemmassm} and \ref{lemmassm_periodic} simultaneously using a direct invariance computation. Differentiating equation (\ref{SSM}) with respect to time twice yields
    \begin{align}
      \ddot \eta_i=
      &2\left(\left<
      \begin{bmatrix}
          \gmtrx{\ddot\xi} \\ \gmtrx{\dddot  \xi}
        \end{bmatrix},
        \begin{bmatrix}
          \mtrx W_{11,i}& \mtrx W_{12,i} \\
          \mtrx W_{12,i}^\top& \mtrx W_{22,i}
        \end{bmatrix}
        \begin{bmatrix}
           \gmtrx{\xi} \\ \gmtrx{\dot\xi}
        \end{bmatrix}
        \right>
        +
        \left<
        \begin{bmatrix}
          \gmtrx{\dot\xi} \\   \gmtrx{\ddot\xi}
          \end{bmatrix},
          \begin{bmatrix}
            \mtrx W_{11,i}& \mtrx W_{12,i} \\
            \mtrx W_{12,i}^\top& \mtrx W_{22,i}
          \end{bmatrix}
          \begin{bmatrix}
            \gmtrx{\dot\xi}\\   \gmtrx{\ddot\xi}
          \end{bmatrix}
        \right>\right) \\
		& +\varepsilon \partial^2_\phi\bar{W}_i(\phi)\Omega^2
        +\mathcal{O}(|(\gmtrx{\xi},\gmtrx{\dot\xi})|^3,\varepsilon|(\gmtrx{\xi},\gmtrx{\dot\xi})|,\varepsilon^2).
        \label{ddotSSM}
      \end{align}
We express the higher derivatives $\ddot{\gmtrx{\xi}}$ and $\gmtrx{\dddot{\xi}}$ using equation (\ref{xieq}) and its derivative with respect to time

      \begin{align}
           \ddot{\gmtrx{\xi}}&= - \mtrx{\hat C}_1 \gmtrx{\dot\xi} - \mtrx{\hat K}_1  \gmtrx{\xi}+\varepsilon \hat{\mtrx{F}}_1\sin(\phi) +\mathcal{O}(|(\gmtrx{\xi},\gmtrx{\dot\xi})|^2) \text{,} \label{ddx}\\
      \begin{split}
          \gmtrx{\dddot  \xi}&=
            \left( \mtrx{\hat C}_1^2-\mtrx{\hat K}_1 \right)\gmtrx{\dot\xi} +\mtrx{\hat C}_1 \mtrx{\hat K}_1  \gmtrx{\xi}+\varepsilon \Omega \hat{\mtrx{F}}_1\cos(\phi) +\mathcal{O}(|(\gmtrx{\xi},\gmtrx{\dot\xi})|^2).
      \end{split}
      \label{dddx}
    \end{align}
Substituting expressions (\ref{ddx}) and (\ref{dddx}) into equation (\ref{ddotSSM}) gives
    \begin{equation}
  \begin{split}
      \ddot \eta_i=&
      2\left<
      \begin{bmatrix}
        -\mtrx{\hat C}_1 \gmtrx{\dot\xi} - \mtrx{\hat K}_1  \gmtrx{\xi}  \\ \left( \mtrx{\hat C}_1^2 -\mtrx{\hat K}_1 \right)\gmtrx{\dot\xi} +\mtrx{\hat C}_1 \mtrx{\hat K}_1  \gmtrx{\xi}
        \end{bmatrix},
        \begin{bmatrix}
          \mtrx W_{11,i}& \mtrx W_{12,i} \\
          \mtrx W_{12,i}^\top& \mtrx W_{22,i}
        \end{bmatrix}
        \begin{bmatrix}
           \gmtrx{\xi}\\ \gmtrx{\dot\xi}
        \end{bmatrix}
        \right>
        \\+&
        2\left<
        \begin{bmatrix}
          \gmtrx{\dot\xi} \\ -\mtrx{\hat C}_1 \dot{\gmtrx{\xi}} -\mtrx{\hat K}_1  \gmtrx{\xi}
          \end{bmatrix},
          \begin{bmatrix}
            \mtrx W_{11,i}& \mtrx W_{12,i} \\
            \mtrx W_{12,i}^\top& \mtrx W_{22,i}
          \end{bmatrix}
          \begin{bmatrix}
            \gmtrx{\dot\xi} \\ -\mtrx{\hat C}_1 \dot{\gmtrx{\xi}} - \mtrx{\hat K}_1  \gmtrx{\xi}
          \end{bmatrix}
          \right>
          \\+& \varepsilon \partial^2_\phi\bar{W}_i(\phi)\Omega^2 +\mathcal{O}(|(\gmtrx{\xi},\gmtrx{\dot\xi})|^3,\varepsilon|(\gmtrx{\xi},\gmtrx{\dot\xi})|,\varepsilon^2).
        \end{split}\label{etaddsub}
        \end{equation}
Equation (\ref{etaddsub}) is still quadratic in $ \gmtrx{\xi}$ and $\gmtrx{\dot\xi}$, and can be rearranged as
    \begin{equation}
      \ddot \eta_i=\left<
      \begin{bmatrix}
         \gmtrx{\xi}\\\gmtrx{\dot\xi}
      \end{bmatrix}
       ,
       \underbrace{
       \begin{bmatrix}
         \mtrx B_{11,i} & \mtrx B_{12,i} \\ \mtrx B_{21,i} & \mtrx B_{22,i}
       \end{bmatrix}}_{\mtrx B,i}
       \begin{bmatrix}
          \gmtrx{\xi}\\\gmtrx{\dot\xi}
       \end{bmatrix}
       \right>+\varepsilon \partial^2_\phi\bar{W}_i(\phi)\Omega^2 +\mathcal{O}(|(\gmtrx{\xi},\gmtrx{\dot\xi})|^3,\varepsilon|(\gmtrx{\xi},\gmtrx{\dot\xi})|,\varepsilon^2), \label{Bform}
    \end{equation}
where the $\mtrx B_{kl,i}$ matrices are equal to
    \begin{align}
      \mtrx B_{11,i}&=2 \mtrx{\hat K}_1 \mtrx{\hat C}_1 \mtrx W_{12,i}^\top  - 2\mtrx{\hat K}_1  \mtrx W_{11,i}+  2\mtrx{\hat K}_1  \mtrx W_{22,i} \mtrx{\hat K}_1,  \\
      \mtrx B_{12,i}&=2 \mtrx{\hat K}_1 \mtrx{\hat C}_1 \mtrx W_{22,i}-2\mtrx{\hat K}_1  \mtrx W_{12,i}-2\mtrx{\hat K}_1  \mtrx W_{12,i}^\top,\\
      \begin{split}
        \mtrx B_{21,i}&=2\left(\mtrx{\hat C}_1 ^2-\mtrx{\hat K}_1 \right)\mtrx W_{12,i}^\top-2\mtrx{\hat C}_1  \mtrx W_{11,i}+4\mtrx{\hat C}_1  \mtrx W_{22,i}\mtrx{\hat K}_1\\  &-2\mtrx W_{12,i} \mtrx{\hat K}_1,
      \end{split}\\
      \begin{split}
        \mtrx B_{22,i}&=2\left(\mtrx{\hat C}_1 ^2-\mtrx{\hat K}_1 \right)\mtrx W_{22,i}-2\mtrx{\hat C}_1  \mtrx W_{12,i}+2\mtrx{\hat C}_1  \mtrx W_{22,i} \mtrx{\hat C}_1\\ &-4\mtrx{\hat C}_1  \mtrx W_{12,i}^\top+2\mtrx W_{11,i}.
      \end{split}
    \end{align}
The $i^\text{th}$ element of equation (\ref{etaeq}) can be written as
  \begin{equation}
    \ddot \eta_i + 2\zeta_i \omega_i \dot \eta_i+ \omega_i^2 \eta_i + S_{i}( \gmtrx{\xi},\gmtrx{\dot\xi}, \gmtrx{\eta}, \gmtrx{\dot{\eta}})=\varepsilon \hat{F}_{2,i}\sin(\phi),
  \end{equation}
  which, truncated at third-order in $ \gmtrx{\xi}$ and $\gmtrx{\dot\xi}$ and taking into consideration that $\eta_i=\mathcal{O}(|(\gmtrx{\xi},\gmtrx{\dot\xi})|^2)$, leads to
  \begin{equation}
    \ddot \eta_i + 2\zeta_i \omega_i \dot \eta_i+ \omega_i^2 \eta_i +
    \left<
    \begin{bmatrix}
       \gmtrx{\xi}\\ \gmtrx{\dot\xi}
    \end{bmatrix} ,
    \begin{bmatrix}
      \mtrx S_{11,i} & \mtrx S_{12,i}\\ \mtrx S_{12,i}^\top & \mtrx S_{22,i}
    \end{bmatrix}
    \begin{bmatrix}
       \gmtrx{\xi}\\ \gmtrx{\dot\xi}
    \end{bmatrix}
    \right>+\mathcal{O}(|(\gmtrx{\xi},\gmtrx{\dot\xi})|^3)=\varepsilon \hat{F}_{2,i}\sin(\phi), \label{etaddsubr}
  \end{equation}
where $\mtrx S_{11,i}$, $\mtrx S_{12,i}$ and $\mtrx S_{22,i}$ are $\nu\times \nu$ matrices filled with coefficients of the $ \gmtrx{\xi},\gmtrx{\dot\xi}$ dependent terms from the quadratic part of $S_{i}( \gmtrx{\xi},\gmtrx{\dot\xi}, \gmtrx\eta, \dot {\gmtrx\eta})$. Substituting $\eta_i$ from equation (\ref{SSM}) and its time derivative $\dot \eta_i$ into equation (\ref{etaddsubr}), leads, after some rearrangement, to the quadratic approximation:
  \begin{equation}
    \ddot \eta_i =
      \left<
      \begin{bmatrix}
         \gmtrx{\xi} \\ \gmtrx{\dot\xi}
        \end{bmatrix},
          \mtrx C_i
        \begin{bmatrix}
           \gmtrx{\xi}\\ \gmtrx{\dot\xi}
        \end{bmatrix}
        \right>
        + \varepsilon\left(\hat{F}_{2,i}\sin{\phi}-2\zeta_i\omega_i\partial_\phi\bar{W}_i\Omega-\omega_i^2\bar{W}_i\right)
         +\mathcal{O}(|(\gmtrx{\xi},\gmtrx{\dot\xi})|^3,\varepsilon|(\gmtrx{\xi},\gmtrx{\dot\xi})|,\varepsilon^2), \label{Cform}
      \end{equation}
  where $\mtrx C_i$ denotes the hypermatrix
  \begin{align}
      \mtrx C_i&=
        \begin{bmatrix}
          C_{11,i} &C_{12,i} \\
          C_{21,i}& C_{22,i}
        \end{bmatrix},
      \\
      C_{11,i}&=-\omega_i^2 \mtrx W_{11,i}+ 4\zeta_i \omega_i \mtrx{\hat K}_1 \mtrx W_{12,i}^\top -\mtrx S_{11,i},
      \\
      C_{12,i}&=-\omega_i^2 \mtrx W_{12,i}-\mtrx S_{12,i}+4\zeta_i \omega_i  \mtrx{\hat K}_1 \mtrx W_{22,i},
      \\
      C_{21,i}&=-\omega_i^2 \mtrx W_{12,i}^\top -\mtrx S_{12,i}^\top-4\zeta_i \omega_i \mtrx W_{11,i}+ 4\zeta_i \omega_i \mtrx{\hat C}_1 \mtrx W_{12,i}^\top,
      \\
      C_{22,i}&=-\omega_i^2 \mtrx W_{22,i}-\mtrx S_{22,i}-4\zeta_i \omega_i \mtrx W_{12,i}+ 4\zeta_i \omega_i \mtrx{\hat C}_1 \mtrx W_{22,i}.
  \end{align}

At leading order, we are now able to equate the quadratic forms in equations (\ref{Bform}) and (\ref{Cform}), which implies that $\mtrx B_i$ and $\mtrx C_i$ must have equal symmetric parts, i.e.,

    \begin{equation}
      \text{Sym } \mtrx B_i =\text{Sym } \mtrx C_i,
      \label{quadeq}
    \end{equation}
which returns the linear system stated in Lemma \ref{lemmassm}. Equating the $\mathcal{O}(\varepsilon|(\gmtrx{\xi},\gmtrx{\dot\xi})|^0)$ terms will return a non-homogeneous second-order ordinary differential equation
\begin{equation}
\Omega^2\bar{W}_i^{\prime\prime}+2\zeta_i\omega_i\Omega \bar{W}_i^\prime + \omega_i^2 \bar{W}_i = \hat{F}_{2,i}\sin(\phi), \label{eq:2nd_ode}
\end{equation}
where we introduced the notation $\partial_\phi(\cdot)=(\cdot)^\prime$. Using the method of undetermined coefficients, we obtain the periodic solution
\begin{align}
\bar{W}_i(\phi) &= \frac{\omega_i^2-\Omega^2}{(\omega_i^2-\Omega^2)^2+(2\zeta_i\omega_i\Omega)^2}\hat{F}_{2,i}\sin(\phi) \\
&+ \left(\frac{(\omega_i^2-\Omega^2)^2}{2\zeta_i\omega_i\Omega((\omega_i^2-\Omega^2)^2+(2\zeta_i\omega_i\Omega)^2)}-\frac{1}{2\zeta_i\omega_i\Omega}\right)\hat{F}_{2,i}\cos(\phi),\nonumber
\end{align}
which proves Lemma \ref{lemmassm_periodic}.
\end{proof}

\section{Proof of Theorem \ref{thrmssm} \label{proof_thrmssm}}
A general solution of equations (\ref{ssmeq1})-(\ref{ssmeq3}), without specifying the dimension $\nu$, cannot be obtained. However, the solution for the practically most relevant case of reducing the system to a single degree of freedom ($\nu=1$) can be written in closed form,

  \begin{equation}
    \begin{split}
    &\begin{bmatrix}
      w_{11,i}\\w_{12,i}\\w_{22,i}
    \end{bmatrix} = \mtrx L^{-1}_i
    \begin{bmatrix}
    s_{20\mtrx{0}\mtrx{0},i}\\s_{11\mtrx{0}\mtrx{0},i}\\s_{02\mtrx{0}\mtrx{0},i}
    \end{bmatrix}, \quad
     \mtrx{L}_i = \begin{bmatrix}
      l_{11} & l_{12} & l_{13}\\
      l_{21}  & l_{22} & l_{23}\\
      l_{31} & l_{32} & l_{33}\\
    \end{bmatrix}
    ,\\
    &l_{11}=2\omega^2 - \omega_i^2
    ,\\
    &l_{12}=4\omega^2(\zeta_i\omega_i - \zeta\omega)
    ,\\
    &l_{13}=-2\omega^4
    ,\\
    &l_{21}=2(\zeta\omega - \zeta_i\omega_i)
    ,\\
    &l_{22}=4\zeta_i\omega_i\zeta\omega-4\omega^2(\zeta^2-1)-\omega_i^2
    ,\\
    &l_{23}=2\zeta_i\omega_i \omega^2 - 6\zeta\omega^3
    ,\\
    &l_{31}=-2
    ,\\
    &l_{32}=4(3\zeta\omega-\zeta_i\omega_i)
    ,\\
    &l_{33}= 2\omega^2 - \omega_i^2 + 8\zeta_i\omega_i \zeta\omega - 16 (\zeta\omega)^2.
    \end{split}
    \label{linearsystem}
  \end{equation}
The determinant of $\mtrx L_i$ can be expressed as
\begin{equation}
  \begin{split}
    D:=-\left(4 \zeta ^2 \omega ^2-4 \zeta  \zeta_i \omega  \omega_i+\omega_i^2\right) \left(8 \omega ^2 \omega_i^2 \left(2 \zeta ^2+2 \zeta_i^2-1\right)\right.\\\left. -32 \zeta  \zeta_i \omega ^3 \omega_i-8 \zeta  \zeta_i \omega  \omega_i^3+16 \omega ^4+\omega_i^4\right).
  \end{split}
\end{equation}
The construction of the SSM breaks down exactly when $\mtrx L_i$ becomes singular. This implies that the determinant $D$ is zero, which will be the case for a 1:2 resonance ($\zeta_i=\zeta$, $\omega_i=2\omega$). Note that a 1:1 resonance is allowed by the SSM theory.

The modal non-modeling coordinate $\eta_{i}$ (\ref{SSM}), for $\nu=1$, can be written as
\begin{equation}
\begin{split}
\eta_{i}=w_{i}(x,\dot{x},\phi)=&\left<\begin{bmatrix}
x\\
\dot{x}
\end{bmatrix},\begin{bmatrix}
w_{11,i} & w_{12,i}\\
w_{12,i} & w_{22,i}
\end{bmatrix}
\begin{bmatrix}
x\\
\dot{x}
\end{bmatrix}\right>
+\varepsilon \bar{W}_i(\phi)
+\mathcal{O}(|(x,\dot{x})|^3,\varepsilon|(x,\dot{x})|,\varepsilon^2)\label{etassm}
\end{split}
\end{equation}
Differentiating equation (\ref{etassm}) with respect to time and plugging in $\gmtrx{\ddot{\xi}}=\ddot x$ from equation (\ref{ddx}) gives
\begin{equation}
\begin{split}
\dot{\eta_{i}}=\tilde{w}_{i}(x,\dot{x},\phi)
=&\left< \begin{bmatrix}
x\\
\dot{x}
\end{bmatrix},\begin{bmatrix}
\tilde{w}_{11,i} & \tilde{w}_{12,i}\\
\tilde{w}_{12,i} & \tilde{w}_{22,i}
\end{bmatrix}\begin{bmatrix}
x\\
\dot{x}
\end{bmatrix}\right> +\varepsilon \partial_\phi\bar{W}_i(\phi)\Omega +\mathcal{O}(|(x,\dot x)|^3,\varepsilon|(x,\dot{x})|,\varepsilon^2),  \label{eta_dotssm}
\end{split}
\end{equation}
with
\begin{align}
&\tilde{w}_{11,i} = -2\omega^{2}w_{12,i}, \nonumber \\
&\tilde{w}_{12,i} = w_{11,i}-2\zeta\omega w_{12,i}-\omega^{2}w_{22,i}, \label{wtilde}\\
&\tilde{w}_{22,i} = 2w_{12,i}-4\zeta\omega w_{22,i}. \nonumber
\end{align}
We rewrite equations (\ref{etassm}) and (\ref{eta_dotssm}) using the Kronecker
product notation
\begin{align}
\eta_{i}&=\left\langle \left[\begin{array}{cccc}
w_{11,i} & w_{12,i} & w_{12,i} & w_{22,i}\end{array}\right]^{\top},\mathbf{z}^{\otimes2}\right\rangle+\varepsilon \bar{W}_i(\phi)
+\mathcal{O}(|(x,\dot{x})|^3,\varepsilon|(x,\dot{x})|,\varepsilon^2),\\
\dot{\eta}_{i}&=\left\langle \left[\begin{array}{cccc}
\tilde{w}_{11,i} & \tilde{w}_{12,i} & \tilde{w}_{12,i} & \tilde{w}_{22,i}\end{array}\right]^{\top},\mathbf{z}^{\otimes2}\right\rangle+\varepsilon \partial_\phi\bar{W}_i(\phi)\Omega +\mathcal{O}(|(x,\dot x)|^3,\varepsilon|(x,\dot{x})|,\varepsilon^2),
\end{align}
where
\begin{equation}
\mathbf{z}=\left[x,\dot{x}\right]^{\top}\in\mathbb{R}^{2},\quad\mathbf{z}^{\otimes2}=\left[x^2,x\dot{x},\dot{x}x,\dot{x}^2\right]^\top\in\mathbb{R}^{4}.
\end{equation}
The modal non-modeling positions and velocities can be written in
vector form as
\begin{align}
&\boldsymbol{\eta}=\underbrace{\left[\begin{array}{cccc}
\mtrx{{w}}_{11} & \mtrx{{w}}_{12} & \mtrx{{w}}_{12} & \mtrx{{w}}_{22}\\
\end{array}\right]}_{\mathbf{W}}\mathbf{z}^{\otimes2}+\varepsilon \bar{\mtrx{W}}(\phi)
+\mathcal{O}(|(x,\dot{x})|^3,\varepsilon|(x,\dot{x})|,\varepsilon^2), \label{wmat}\\
&\boldsymbol{\dot{\eta}}=\underbrace{\left[\begin{array}{cccc}
\mtrx{\tilde{w}}_{11} & \mtrx{\tilde{w}}_{12} & \mtrx{\tilde{w}}_{12} & \mtrx{\tilde{w}}_{22}\\
\end{array}\right]}_{\mathbf{\tilde{W}}}\mathbf{z}^{\otimes2}+\varepsilon \partial_\phi\bar{\mtrx{W}}(\phi)\Omega +\mathcal{O}(|(x,\dot x)|^3,\varepsilon|(x,\dot{x})|,\varepsilon^2).  \label{wtildemat}
\end{align}
The reduced model is obtained by substituting $\mtrx y = \gmtrx{\Phi}_2 \gmtrx{\eta}$ and $\mtrx{\dot{y}}=\gmtrx{\Phi}_2 \gmtrx{\dot{\eta}}$ into equation (\ref{xeq})
  \begin{equation}
    \begin{split}
     & m \ddot x + c \dot x+ k x +P(x,\dot x, \gmtrx{\Phi}_2 \gmtrx{\eta},\gmtrx{\Phi}_2 \gmtrx{\dot{\eta}})+\mathcal{O}(|(x,\dot x)|^4)=\varepsilon F_1 \sin(\phi)
    \end{split} \label{SSMredmodel}
  \end{equation}

Note that the $\mtrx y$ and $\mtrx{\dot{y}}$ dependency in the cubic part of $P$ leads to at least fourth-order terms in ($x, \dot x$), therefore they are omitted from the third-order reduced model. The polynomial function $P$, can explicitly be written as

\begin{align}
\begin{split}
  P = & p_{20\mtrx{0}\mtrx{0}}x^2+p_{11\mtrx{0}\mtrx{0}}x \dot x+p_{02\mtrx{0}\mtrx{0}} \dot x^2 \\
 & + \left<\left(\gmtrx{\Phi}_2\mtrx{W}\right)^\top\left(x\mtrx{p}_{10\mtrx{I}\mtrx{0}}+\dot{x}\mtrx{p}_{01\mtrx{I}\mtrx{0}}\right),\mtrx{z}^{\otimes 2} \right> \\
 & + \left<\left(\gmtrx{\Phi}_2\mtrx{\tilde{W}}\right)^\top\left(x\mtrx{p}_{10\mtrx{0}\mtrx{I}}+\dot{x}\mtrx{p}_{01\mtrx{0}\mtrx{I}}\right),\mtrx{z}^{\otimes 2} \right>\\
&+ p_{30\mtrx{0}\mtrx{0}}x^3+p_{21\mtrx{0}\mtrx{0}}x^2 \dot x+p_{12\mtrx{0}\mtrx{0}}x \dot x^2+p_{03\mtrx{0}\mtrx{0}} \dot x^3+\mathcal{O}(|(x,\dot x)|^4,\varepsilon|(x,\dot{x})|,\varepsilon^2).
\end{split}\label{P2s}
\end{align}
Substituting (\ref{P2s}) into equation (\ref{SSMredmodel}) yields the SSM-reduced model stated in Theorem \ref{thrmssm}.\qed

\section{Proof of Theorem \ref{thm:backbone}} \label{app:backbone}
We rescale equation (\ref{perturbed}) for small vibrations,
    \begin{equation}
      \ddot x + \omega_0^2 x +\epsilon p^{}_{20}x^2 + \epsilon^2\left((p^{}_{30}+\hat p_{30}) x^3+\hat s_{12} x \dot x^2\right) = 0.
      \label{perturbationproblem}
    \end{equation}
    Theorem \ref{thm_ham} guarantees the existence of periodic orbits in the third order reduced dynamics around the fixed point. We extract the backbone curve from the periodic orbits by continuing these orbits from the linear equation as an unperturbed limit of equation (\ref{perturbationproblem}).

We denote the, $\epsilon$ dependent, angular frequency with
      \begin{equation}
        \omega (\epsilon )= \text{$\omega _0$}+\epsilon \text{$\omega_1$} + \epsilon ^2\text{$\omega _2$}+ \mathcal{O}(\epsilon^3),
      \end{equation}
    where $\omega_0$ is the linear angular frequency. Introducing a rescaling in time as $\tau=\omega(\epsilon) t$, we obtain
    \begin{equation}
       \omega^2 (\epsilon )x'' + \omega_0^2 x +\epsilon p^{}_{20}x^2 + \epsilon^2\left((p^{}_{30}+\hat p_{30}) x^3+\hat s_{12} \omega (\epsilon ) x  x'^{2}\right) = 0, \label{eq:scale}
    \end{equation}
    where the prime denotes differentiation with respect to the rescaled time. We expand $x(\tau)$ in $\epsilon$,
    \begin{equation}
      x(\tau)=\phi_0 (\tau)+\epsilon\phi_1 (\tau)+\epsilon^2 \phi_2 (\tau)+\mathcal{O}(\epsilon^3),
    \end{equation}
and substitute the result into equation (\ref{eq:scale}), after which we set $\phi(0)=r, \phi'(0)=0$ to define an initial value problem. Collecting terms of $\mathcal{O}(1)$, satisfying the initial conditions, leads to
    \begin{equation}
      \phi_0(\tau )=r \cos(\tau).
    \end{equation}
Collecting terms of $\mathcal{O}(\epsilon)$, and substituting the solution for $\phi_0(\tau)$, leads to the following equation
    \begin{equation}
      {\omega_0}^2 {\phi_1}''(\tau )+{\omega_0}^2 {\phi_1}(\tau)=\frac{p^{}_{20}r^2}{2} \left( \cos (2 \tau )+1\right)-2 {r} {\omega_0} {\omega_1} \cos (\tau ). \label{eq:ordereps}
    \end{equation}
We set $\omega_1$ in equation (\ref{eq:ordereps}) equal to $0$ in order to avoid having an external resonance, which would lead to an aperiodic solution. This results in the following solution for $\phi_1(\tau)$
    \begin{equation}
      {\phi_1}(\tau)=-\frac{p^{}_{20} r^2}{2 {\omega_0}^2}+\frac{p^{}_{20} r^2 \cos (\tau )}{3 {\omega_0}^2}+\frac{p^{}_{20} r^2 \cos (2 \tau )}{6 {\omega_0}^2}.
    \end{equation}
Finally, collecting terms of $\mathcal{O}(\epsilon^2)$, we obtain the resulting equation for $\phi_2(\tau)$, where we immediately observe that in order to avoid having an external resonance, we must have that
    \begin{equation}
      \omega_2=\frac{9(p^{}_{30}+\hat p_{30})\omega_0^2-10p^{2}_{20}+3\hat s_{12}\omega_0^4}{24\omega_0^3}r^2,
    \end{equation}
which completes the second order approximation of the backbone curve and proves Theorem \ref{thm:backbone}. \qed

\bibliography{biblio.bib}

\end{document}